\documentclass[preprint]{imsart}

\RequirePackage[OT1]{fontenc}
\RequirePackage{amsthm,amsmath,amsfonts,amssymb}
\RequirePackage[numbers]{natbib}
\RequirePackage[colorlinks,citecolor=blue,urlcolor=blue]{hyperref}
\usepackage{multirow}
\usepackage{graphicx}
\usepackage{mathrsfs}
\usepackage{bm}
\usepackage{epsfig}
\usepackage{epstopdf}
\usepackage{epic,rotating,color}

\startlocaldefs
\newsavebox{\tablebox}
\newcommand{\prob}{\mathbb{P}}
\newcommand{\expn}{\mathbb{E}}

\newtheorem{thm}{Theorem}

\newtheorem{lem}[thm]{Lemma}
\newtheorem{remark}{Remark}

\newcommand{\U}{\mathcal{U}}
\newcommand{\V}{\mathcal{V}}
\endlocaldefs

\begin{document}

\begin{frontmatter}

\title{Block bootstrap optimality for density estimation with dependent data}
\runtitle{Block bootstrap for density estimation}

\begin{aug}
\author{\fnms{Todd A.} \snm{Kuffner}\thanksref{m1}\corref{}\ead[label=e1]{kuffner@wustl.edu}},
\author{\fnms{Stephen M.S.} \snm{Lee}\thanksref{m2}\ead[label=e2]{smslee@hku.hk}}
\and
\author{\fnms{G. Alastair} \snm{Young}\thanksref{m3}\ead[label=e3]{alastair.young@imperial.ac.uk}}

\runauthor{T.A. Kuffner et al.}

\affiliation{Washington University in St. Louis\thanksmark{m1}, The University of Hong Kong\thanksmark{m2} and Imperial College London\thanksmark{m3}}

\address[m1]{Department of Mathematics and Statistics \\ 
St. Louis, MO, 63130 \\
USA \\
\printead{e1}}

\address[m2]{Department of Statistics and Actuarial Science \\ 
Pokfulam Road \\
Hong Kong \\
\printead{e2}}

\address[m3]{Department of Mathematics \\ 
London SW7 2AZ \\
United Kingdom \\
\printead{e3}}
\end{aug}

\begin{abstract}
Accurate approximation of the sampling distribution of nonparametric kernel density estimators is crucial for many statistical inference problems. Since these estimators have complex asymptotic distributions, bootstrap methods are often used for this purpose. With i.i.d. observations, a large literature exists concerning optimal bootstrap methods which achieve the fastest possible convergence rate of the bootstrap estimator of the sampling distribution of the kernel density estimator. With dependent data, such an optimality theory is an important open problem. We establish a general theory of optimality of the block bootstrap for kernel density estimation under weak dependence assumptions which are satisfied by many important time series models. 
We propose a unified framework for a theoretical study of a rich class of bootstrap methods which include as special cases subsampling, K\"{unsch}'s moving block bootstrap, Hall's under-smoothing (UNS) as well as approaches incorporating no (NBC) or explicit bias correction (EBC). Moreover, we consider their accuracy under a broad spectrum of choices of the bandwidth $h$, which include as an important special case the MSE-optimal choice, as well as other under-smoothed choices. Under each choice of $h$, we derive the optimal tuning parameters and compare optimal performances between the main subclasses (EBC, NBC, UNS) of the bootstrap methods.

\end{abstract}

\begin{keyword}[class=MSC]
\kwd[Primary ]{62G07, 62G09}
\kwd[; secondary ]{62G20, 62M10, 62E20}
\end{keyword}

\begin{keyword}
\kwd{Block bootstrap}
\kwd{kernel density estimation}
\kwd{subsampling}
\kwd{optimality}
\kwd{weak dependence}
\kwd{strong mixing}
\end{keyword}

\end{frontmatter}

\section{Introduction}
\label{sec:intro}

The goal of this paper is to establish a theory of optimality, in terms of achieving the fastest possible convergence rate, for the family of block bootstrap estimators of the sampling distribution of the kernel density estimator based on weakly dependent data. Given a sequence of observations $X_{1}, \ldots, X_{n}$ from a stationary time-dependent process $\{X_{i} \in \mathbb{R}:i =0, \pm 1, \pm 2, \ldots\}$, we consider estimating the marginal density $f$ of the process at a fixed $x_0\in\mathbb{R}$ using a kernel density estimator,
\begin{equation}
    \label{eqn:kde}
    \hat{f}_{h}(x_0)=(nh)^{-1}\sum_{i=1}^{n}K\left((X_{i}-x_0)/h\right), 
\end{equation}
where $h>0$ is a bandwidth and $K(\cdot)$ is a kernel function to be defined later. To facilitate inference about $f$, it is then natural to study the sampling distribution of 
\[
(nh)^{1/2}\big(\hat{f}_{h}(x_{0})-f(x_{0})\big),
\]
which is typically asymptotically normal, under suitable conditions on the underlying process and bandwidth parameter. 

Normal approximation to this sampling distribution is often inaccurate, and depends in a complicated way on both the bandwidth and dependence structure. Over the past several decades, a vast literature has developed concerning how to improve the accuracy of this approximation for independent data, and how to achieve faster convergence to the limiting normal distribution. Popular approaches include bootstrapping, studentization, bias correction, and optimal bandwidth selection, where optimality in the context of bandwidth is understood to mean that choice of $h$ which minimises an estimate of the risk under squared error loss. However, with dependent data, the properties of bootstrap estimators of the sampling distribution of the kernel density estimator are not well-understood. 

When observations are dependent, block bootstrap procedures are the standard tools for resampling-based inference; for a review of their many uses with time series data, see \citet{Buhlmann:2002, HHK:2003, Politis:2003, PRW:1999,Lahiri:2003} and \citet{KreissPaparoditis:2011}. Block bootstrap procedures sample $b$ blocks of consecutive observations of length $\ell$ from the original sample of $n$ observations, and then paste them together to form a pseudo-time series, so that each bootstrap sample consists of $b\ell$ observations. The two most popular variants of the block bootstrap are the subsampling bootstrap proposed by \citet{PolitisRomano:1994a}, which prescribes $b=1$, and the moving block bootstrap (MBB) due to \citet{Kunsch:1989} and \citet{LiuSingh:1992}, which prescribes $b=\lfloor n/\ell \rfloor$, where $\lfloor a \rfloor$ is the largest integer less than or equal to $a$. The subsampling bootstrap exactly preserves the dependence structure of the original sample, and the MBB reproduces the original dependence structure asymptotically. 

We consider block bootstrap estimation of the sampling distribution of $(nh)^{1/2}[\hat{f}_{h}(x_{0})-f(x_{0})]$.
Our results apply to an entire family of block bootstrap procedures. We will allow both $b$, such that $1 \leq b \leq \lfloor n/\ell \rfloor$,  and $\ell$, such that $1 \leq \ell \leq n$, as well as tuning parameters in the form of bootstrap-level bandwidths, to be chosen optimally, and thus the subsampling bootstrap and MBB are special cases of our more general block bootstrap. The optimal choice of $(b,\ell)$ and the tuning bandwidths, i.e. the choice which yields the fastest convergence rate of the block bootstrap estimator, will typically correspond to a choice of $b$ which is intermediate between the subsampling and MBB choices, and for this reason we call our procedure a hybrid block bootstrap.

While there is an extensive catalog of theoretical results concerning optimality of subsampling and the MBB for smooth functionals with independent and dependent data, and for nonsmooth functionals with independent data, there are essentially no existing optimality results of these block bootstrap procedures for nonsmooth functionals with dependent data. Even basic consistency results for the block bootstrap estimator, which are weaker than optimality results, are scarce for kernel density estimation. 

One complicating factor in studying optimal bootstrap procedures in this setting is that kernel density estimators are nonsmooth functionals, which are not covered by general optimality theory for smooth functionals. This feature is particularly relevant because optimality, in the sense of achieving the fastest possible convergence rate for the bootstrap estimator of the sampling distribution, requires the study of higher-order asymptotic properties. For such nonsmooth functionals, the validity of higher-order asymptotic arguments must be established on a case-by-case basis, and thus, unlike the well-known `smooth function model' \citep{Hall:1992}, the optimality theory for different nonsmooth functionals cannot be established by a unified approach. 

A further point of departure from existing literature on block bootstrap is that optimality theory developed in this paper requires different tools than those afforded by empirical process theory, which is typically used to establish basic results such as consistency and asymptotic normality. The error incurred in stochastically approximating the statistical functional by an empirical process is generally of a much larger order than the error incurred in the bootstrap approximation of the sampling distribution of interest. Therefore, arguments which are based on bootstrapping the empirical process approximation of the statistical functional of interest cannot be used to establish the more precise convergence rates provided by our higher-order asymptotic arguments. 

Our study of the higher-order asymptotic properties of the block bootstrap for kernel density estimators resolves several important open questions. We derive the orders of accuracy for a large class of block bootstrap methods which include as special cases the moving block bootstrap, subsampling and undersmoothing.  To achieve this, we derive asymptotic expansions for high-order cumulants and related properties of the kernel density estimator and its bootstrap version under weak dependence assumptions, based on which novel Edgeworth-type expansions are derived for the true and bootstrap distribution functions.

We first determine the order of error in the (non-bootstrap) normal approximation to the sampling distribution of $(nh)^{1/2}[\hat{f}_{h}(x_{0})-f(x_{0})]$. Next, we show that the fastest convergence rate for the block bootstrap estimator of this sampling distribution is obtained by optimally selecting the block length, the number of blocks and the bootstrap-level bandwidths, which in general will correspond to a different procedure than the subsampling bootstrap and moving block bootstrap (MBB). Those block bootstraps prescribe fixed rules for the number of blocks, and existing theory for these procedures has only been concerned with choosing the block length. 
One exception is \citet{KLY:2018b}, where we solve the problem for sample quantiles. However, the theory for these two different nonsmooth functionals (kernel density estimators and sample quantiles) is strikingly different, as we shall discuss, and cannot be treated by a unified general theory.

The remainder of the paper is organized as follows. Background material is detailed in \S~\ref{sec:setting}. In \S~\ref{sec:backgrounds} we summarize key aspects of the block bootstrap and of kernel density estimators, and provide further information about previous work on different but relevant problems. The main theoretical results are presented in \S~\ref{sec:theory}. A simulation study in \S~\ref{sec:simulation} illustrates key facets of the theory for a stationary ARMA process, and we conclude with discussion in \S~\ref{sec:conclusion}. Proofs are contained in the Appendix.

\section{Problem Setting and background}
\label{sec:setting}

Let $\mathbb{Z} \equiv \{0, \pm 1, \pm 2, \ldots \}$ be the set of all integers. Define $\{X_{i}\}_{i \in \mathbb{Z}}$ to be a doubly-infinite sequence of random variables on the probability space $(\Omega, \mathcal{F}, P)$. It is assumed throughout that $\{X_{i}\}_{i \in \mathbb{Z}}$ is a strictly stationary process. The sequence $(X_{1}, \ldots, X_{n})$ denotes a sample of size $n$ from $\{X_{i}\}_{i \in \mathbb{Z}}$. 

\subsection{Strong Mixing}
\label{subsec:mixing}

We define dependence for the sequence of random variables $\{X_{i}\}_{i \in \mathbb{Z}}$ in terms of the mixing properties of $\sigma$-algebras generated by subsets of the sequence which are separated by a distance, in units of time, tending to infinity. For any two sub-$\sigma$-algebras of $\mathcal{F}$, say $\mathcal{F}_{1}$ and $\mathcal{F}_{2}$, the $\alpha$-mixing coefficient between $\mathcal{F}_{1}$ and $\mathcal{F}_{2}$ is defined to be \citep[Section 16.2.1]{AthreyaLahiri:2006}
\begin{equation}
\label{eqn:alpha}
\alpha(\mathcal{F}_{1}, \mathcal{F}_{2}) \equiv \sup_{A \in \mathcal{F}_{1}, B \in \mathcal{F}_{2}} \vert P(A \cap B)-P(A)P(B) \vert .
\end{equation}
Let $\mathcal{F}_{-\infty}^{k}$ be the $\sigma$-algebra generated by the random variables $X_{a}, X_{a+1}, \ldots, X_{k}$ as $a\rightarrow -\infty$, and $\mathcal{F}_{k}^{\infty}$ be the $\sigma$-algebra generated by $X_{k}, X_{k+1}, \ldots, X_{a}$ as $a\rightarrow \infty$. The $\alpha$-mixing coefficient of the sequence $\{X_{i}\}_{i \in \mathbb{Z}}$ is defined as
\[
\alpha(t) \equiv \sup_{k \in \mathbb{Z}} \alpha(\mathcal{F}_{-\infty}^{k}, \mathcal{F}_{k+t}^{\infty}),
\]
where $\alpha(\cdot, \cdot)$ is defined in \eqref{eqn:alpha}. If the $\alpha$-mixing coefficient decays to zero, 
\begin{equation}
\label{eqn:alphamix}
\lim_{t \rightarrow \infty} \alpha(t) = 0,
\end{equation}
then the process $\{X_{i}\}_{i \in \mathbb{Z}}$ is said to be strongly mixing. The sequence of random variables $\{X_{i}\}_{i \in \mathbb{Z}}$ is said to be weakly dependent if the process $\{X_{i} \}_{i \in \mathbb{Z}}$ is strongly mixing, i.e. if \eqref{eqn:alphamix} holds.

\subsection{The Block Bootstrap}

The MBB method \citep{Kunsch:1989} splits the original sample $X_{1}, \ldots, X_{n}$ into overlapping blocks of size $\ell$, $B_{i}=(X_{i}, \ldots, X_{i+\ell-1})$, together constituting a set $\{B_{1}, \ldots, B_{n-\ell+1}\}$. Let $B_{1}^{*}, \ldots, B_{b}^{*}$ be a random sample drawn with replacement from the original blocks, where $b=\lfloor n/\ell \rfloor$ is the number of blocks that will be pasted together to form a pseudo-time series. That $B_{1}^{*}, \ldots, B_{b}^{*}$ is a random sample from $\{B_{1}, \ldots, B_{n-\ell+1}\}$ means that the sampled blocks are independently and identically distributed according to a discrete uniform distribution on $\{B_{1}, \ldots, B_{n-\ell+1}\}$. The observations in the $i$th resampled block, $B_{i}^{*}$, are $X_{(i-1)\ell+1}^{*}, \ldots, X_{i\ell}^{*}$, for $i \leq 1 \leq b$. Then the MBB sample is the concatenation of the resampled blocks, written as 
\[
\underbrace{X_{1}^{*}, \ldots, X_{\ell}^{*}}_{B_{1}^{*},}, \underbrace{X_{\ell+1}^{*}, \ldots, X_{2\ell}^{*}}_{B_{2}^{*},}, \underbrace{X_{2\ell+1}^{*}, \ldots, X_{(b-1)\ell}^{*}}_{B_{3}^{*},\; \ldots\;,\; B_{b-1}^{*},}, \underbrace{X_{(b-1)\ell+1}^{*}, \ldots, X_{b\ell}^{*}}_{B_{b}^{*}}.
\]
Note that this way of constructing the pseudo-time series will reproduce the original dependence structure \textit{asymptotically}.

The subsampling bootstrap \citep{PolitisRomano:1994a}, and specifically the overlapping blocks version relevant to the present setting, first splits the original sample into precisely the same overlapping blocks as the MBB, each of length $\ell$. However, the subsampling bootstrap draws only a single block. A nice property of this procedure is that the original dependence structure in the sample is exactly retained in the single subsample. By contrast, the pseudo-time series constructed by the MBB only reproduces the original dependence structure asymptotically. The subsampling bootstrap is a special case of the MBB in which $b=1$.

Other block bootstraps include the circular block bootstrap \citep{PolitisRomano:1992}, subsampling bootstrap \citep{PolitisRomano:1994a}, stationary bootstrap \citep{PolitisRomano:1994b},  matched block bootstrap \citep{CDHHK:1998} and tapered block bootstrap \citep{PaparoditisPolitis:2001}. These various block bootstraps have been motivated by particular deficiencies in the original MBB, such as to reduce bias, improve variance estimation, or make the pseudo-time series closer to being stationary. In general, the performance of block bootstrap methods for any estimation or inference problem is influenced by the underlying dependence structure, the choice of the block length $\ell$, the choice of the number of blocks $b$, and of course the properties of the statistical functional being used to learn about the population quantity of interest. The validity of each block bootstrap method, i.e. consistency of the resulting estimator, must be studied on a case-by-case basis. We comment that convolved subsampling \citep{TPN:2017} may be viewed as a special case of our hybrid scheme, as the kernel density estimator has exactly a mean-like structure. However, convolved subsampling, based on any block length and any number of blocks, cannot achieve the optimal error rate.

Previous work on optimal block bootstrap methods for smooth functionals has emphasized the importance of choosing a block length $\ell$ which minimizes mean-squared error (MSE), and in all of the literature, the choice of the number of blocks $b$ was not part of the minimization problem. Instead, the number of blocks $b$ was motivated by other considerations. For example, the subsampling bootstrap prescribes $b=1$ because this exactly preserves the dependence structure in the original data, while the MBB prescribes $b=\lfloor n/\ell \rfloor$, which ensures that each bootstrap sample is of approximately the same size as the original sample, and that $b \rightarrow \infty$ as $n \rightarrow \infty$.

Subsampling and block bootstrap can be closely related to one another, at least in many standard smooth function applications. For the case of estimating sample means, the block bootstrap distribution is in fact a convolution of the subsampling bootstrap distribution \citep{TPN:2017}. This is not true in general, however. For example, it is not true for the sample quantile -- another non-smooth functional -- as pointed out by \citet{KLY:2018b}. For non-smooth functionals, one cannot hope to have a general theory of optimality. The present paper examines an important particular case.

\subsection{Nonparametric Bootstrap for Kernel Estimation}

Several other nonparametric bootstrap methods, distinct from block bootstrap, have been established as consistent for kernel estimators of various population quantities. In particular, for the specific problem of bootstrapping a kernel estimator of the conditional mean function, \citet{PaparoditisPolitis:2000} propose a local bootstrap approach which is distinct from the block bootstrap methods discussed herein. In the context of nonparametric trend estimation, \citet{KreissPaparoditis:2012} propose a hybrid wild bootstrap procedure. This involves using a frequency domain bootstrap, inverting the bootstrapped discrete Fourier transform, and thereby obtaining a time-domain pseudo-time series which is then used for kernel estimation of the trend function \citep[\S 3.2.2]{KreissPaparoditis:2012}. The frequency domain bootstrap (FDP) is a transformation-based bootstrap \citep[\S 2.9 and also Ch. 9]{Lahiri:2003} which attempts to transform the original data to be independent, or to at least weaken the dependence structure. It takes advantage of the fact that the Fourier transforms of weakly dependent observations are asymptotically independent \citep{Lahiri:2003b}.

\section{Bootstrapping Kernel Density Estimators}
\label{sec:backgrounds}

\subsection{Kernel Density Estimation Under Dependence}

Asymptotic properties of kernel estimators for time series models satisfying strong mixing conditions were studied by \citet{Robinson:1983} and \citet{MasryTjostheim:1995}. \citet{Robinson:1983} justified the use of kernel density estimators with weakly dependent data by showing that, under strong mixing conditions, there is first-order equivalence of the asymptotic distributions of the kernel density estimators from such strong mixing sequences to the analogous limiting distributions for such estimators arising from independent data. This phenomenon was called `whitening by windowing' by \citet{Hart:1996}.

\subsection{Explicit Bias Correction vs Undersmoothing}

Confidence intervals for density functions are easily derived from bootstrap approximation to the sampling distribution of kernel density estimators. Coverage properties of these intervals can depend crucially on bias correction, bandwidth selection, and the order of the kernels used. Conventionally, one optimizes the kernel density estimator by choosing that bandwidth which minimizes the asymptotic mean-square error (MSE) of the point estimator. The MSE-optimal bandwidth is too large, resulting in a non-negligible scaled bias of the kernel density estimator. \citet{Hall:1992} showed that undersmoothing leads to more accurate confidence intervals compared to explicit bias correction for kernel density estimators, and \citet{Neumann:1997} found similar results for kernel-based nonparametric regression estimators. Recently, \citet{CCF:2018} showed that this conventional wisdom may be incorrect. In particular, they showed that with independent data, explicit bias correction together with appropriate studentization can outperform undersmoothing in terms of coverage accuracy. The main issue in the original arguments of \citet{Hall:1992} which led to the preference for undersmoothing is that his bias-corrrected kernel density estimators were scaled by the asymptotic variance of the kernel density estimator alone, which does not take into account the variance of the bias estimator and therefore can be a poor approximation in finite samples. After correctly studentizing, coverage accuracy after explicit bias correction outperforms undersmoothing, and in fact the resulting coverage properties are somewhat robust to bandwidth specification. That is, the good coverage properties hold for a range of bandwidth values, rendering the tricky optimal bandwidth choice somewhat less essential. 

In the dependent data setting, there is also typically an undersmoothing assumption used to avoid explicit bias correction, and also to prove consistency of the bootstrap estimator of the sampling distribution for kernel-based estimators; c.f. \citet{NeumannKreiss:1998}, \citet{PaparoditisPolitis:2000}, \citet{FKM:2002}, \citet[p. 125; they have a typo and call it oversmoothing]{ParrellaVitale:2007}.
In the specialized setting of locally stationary processes, \citet{DPP:2013} study a local block bootstrap procedure for a nonparametric kernel estimate of a deterministic trend in the presence of $\alpha$-mixing errors. They prove consistency, and also consider optimal bandwidth for achieving the fastest convergence rate. Theorem 4 of \citet{DPP:2013} emphasizes that undersmoothing is necessary if one wants to avoid explicit bias correction. The local block bootstrap was originally proposed by \citet{PaparoditisPolitis:2002} and consistency for the sample mean was shown in \citet{DPP:2003}.

\subsection{Our Optimality Results}

We consider a general class of block bootstrap methods and derive their optimal settings within three subclasses corresponding to, respectively, explicit bias correction (EBC), no bias correction (NBC) and undersmoothing (UNS). 
Our results show that the fastest convergence rate for the block bootstrap estimate of the sampling distribution of the kernel density estimator is achieved by simultaneously choosing the pair $(b,\ell)$ and the bootstrap-level bandwidths. Thus, in addition to addressing the open problem of optimal block bootstrap methods for kernel density estimation, our results suggest that there are meaningful practical advantages to using a hybrid block bootstrap which uses a number of blocks in-between $b=1$ (subsampling) and $b=\lfloor n/\ell \rfloor$ (MBB). Our results carry important implications for scalability and computational efficiency of bootstrap methods with dependent data.

Our theoretical results distinguish between strong mixing processes with mixing coefficients decaying at, respectively, polynomial and exponential rates. Optimality theory is presented for both polynomial and exponential mixing. We first establish theorems concerning asymptotic normality of kernel density estimators under each mixing regime, as well as asymptotic normality for the corresponding block bootstrap kernel density estimators. The presence of bandwidth creates a number of interesting scenarios. The optimal number of blocks, and the bandwidths employed at the bootstrap stage, are intricately related to each other. 

\section{Theoretical Results}
\label{sec:theory}

\subsection{Distribution of kernel density estimator}

Assume that $(X_1,\ldots,X_n)$ is a sample of a stationary
strong mixing process with mixing
coefficient $\alpha(t)$ satisfying, for some $\beta>2$,
\[ \alpha(t)=O(t^{-\beta}),\qquad t\rightarrow\infty. \]
Denote by $f$ the density function of $X_1$ and by $f_{i_1\cdots i_d}$ the joint density function
of $\big(X_0,X_{i_1},\ldots,X_{i_d}\big)$ for any $i_d>\cdots>i_1>0$.  Assume that
\[ \underset{x}\sup|f(x)|<\infty\qquad\text{and}\qquad
\underset{x_0,x_1,\ldots,x_d}\sup\big|f_{i_1\cdots i_d}(x_0,x_1,\ldots,x_d)\big|<\infty,\]
for  any $i_d>\cdots>i_1>0$ and any $d=1,2,\ldots\:$.

Let $K$ be a univariate, non-negative, symmetric and bounded kernel function satisfying
\[ \int_{-\infty}^\infty K(u)\,du=1,\quad\mu_2\triangleq\int_{-\infty}^\infty u^2K(u)\,du<\infty,\quad\nu_2\triangleq
\int_{-\infty}^\infty K(u)^2du<\infty.\]

Let $x_0\in\mathbb{R}$ be fixed, at which  $f$ is assumed to be positive and four times continuously differentiable. Define, for a positive bandwidth $h>0$, the kernel density estimator of $f(x_0)$  to be
\[ \hat{f}_h(x_0)=(nh)^{-1}\sum_{i=1}^nK\big((X_i-x_0)/h\big). \]
Define, for $\beta>2$ and $d\ge 3$,
\begin{gather*}
\gamma_0(\beta,d)=1 - \pmb{1}\{\beta>d-1\}\left(\dfrac{\beta-1}{\beta+d-2}\right)
\left\{\dfrac{(\beta-1)(\beta+d-2)}{ \beta^2 + (d-3)\beta + (d-2)^2}\right\}^{\frac{(\beta-1)}{(\beta-d+1)}},\\
g_0(\beta)=\inf\left\{d\left(\dfrac{\beta}{\beta-1}-\gamma_0(\beta,d)\right):d\ge 3\right\}.
\end{gather*}
A plot of $g_0(\beta)$ against $\beta$ on the log-scale is shown below.
\begin{center}
\includegraphics[scale=0.35]{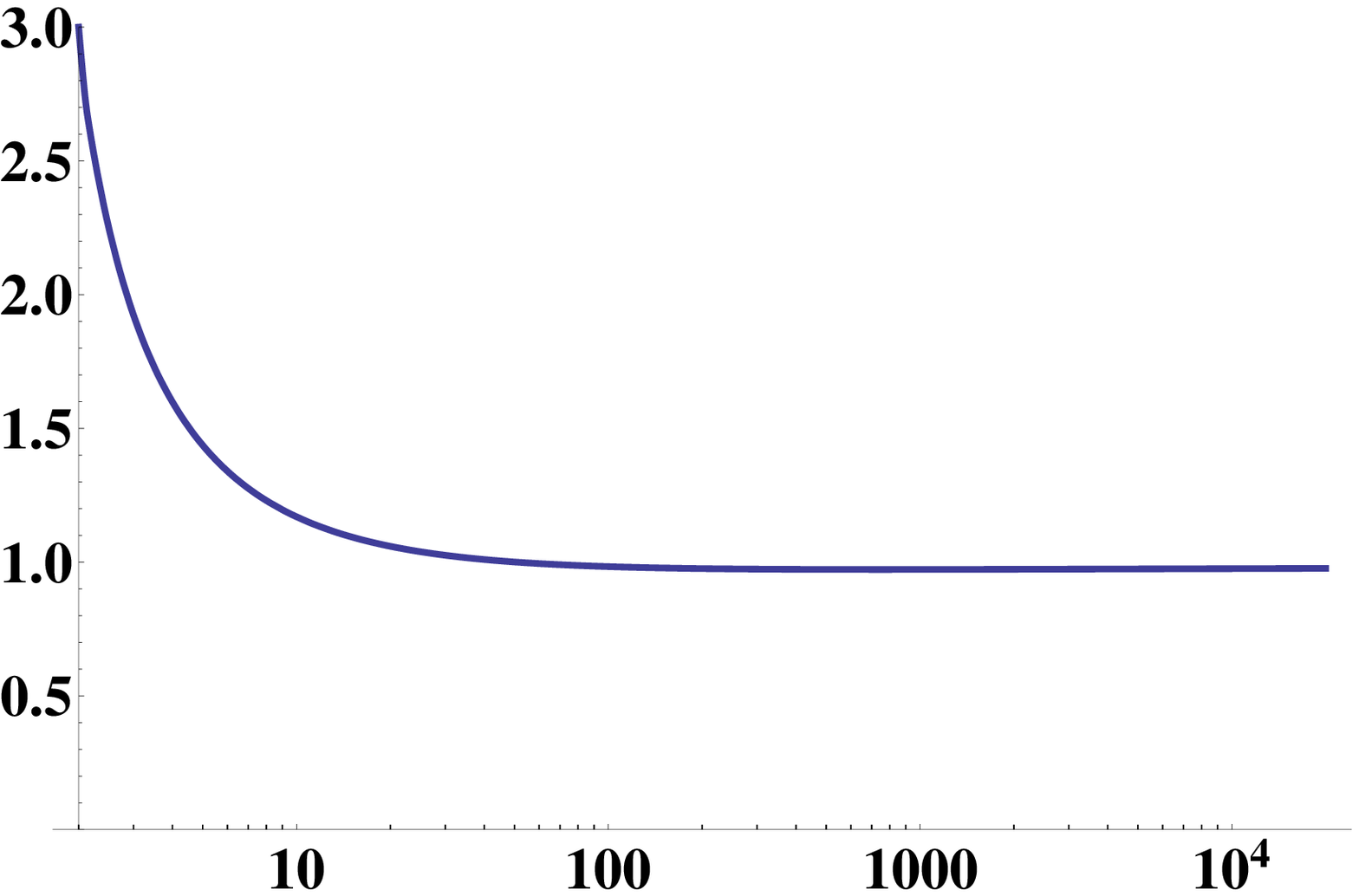}
\end{center}

\begin{thm}
\label{thm:nonbt}
Suppose that $\alpha(t)=O(t^{-\beta})$ as $t\rightarrow\infty$, for some $\beta>2$, and that
$nh^3\rightarrow\infty$ and $h=O(n^{-1/5})$.
Then, for any fixed $y\in\mathbb{R}$,
\begin{multline*}
\prob\Big((nh)^{1/2}\big\{\hat{f}_h(x_0)-f(x_0)\big\}\le y\Big)=
\Phi\left(\dfrac{y-n^{1/2}h^{5/2}f''(x_0)\mu_2/2}{\sqrt{f(x_0)\nu_2}}\right)\\
+\,O\left(n^{-(\beta-1)/(4\beta+2)}h^{-(\beta+1)/(4\beta+2)}+h^{(\beta-2)/\beta}
+n^{-1/2+\delta}h^{-(3\beta+3)/(2\beta-2)+g_0(\beta)}\right),
\end{multline*}
for any arbitrarily small $\delta>0$.
\end{thm}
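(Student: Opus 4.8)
\emph{Decomposition and the leading terms.} Write $S_n=(nh)^{1/2}\{\hat f_h(x_0)-f(x_0)\}=T_n+B_n$, where $B_n=(nh)^{1/2}\{\expn\hat f_h(x_0)-f(x_0)\}$ is deterministic and $T_n=(nh)^{-1/2}\sum_{i=1}^nY_i$ with $Y_i=K((X_i-x_0)/h)-\expn K((X_i-x_0)/h)$. A fourth-order Taylor expansion of $\int K(u)f(x_0+hu)\,du$, using the symmetry and moment conditions on $K$ and the assumed four-times differentiability of $f$ at $x_0$, gives $\expn\hat f_h(x_0)-f(x_0)=\tfrac12h^2f''(x_0)\mu_2+O(h^4)$, hence $B_n=\tfrac12n^{1/2}h^{5/2}f''(x_0)\mu_2+O(n^{1/2}h^{9/2})$; since $h=O(n^{-1/5})$ the remainder is $O(n^{-2/5})$ and $B_n=O(1)$, so at the end it will be harmless to replace $B_n$ by its leading term inside the (Lipschitz) normal c.d.f. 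Next I would compute $\sigma_n^2=\mathrm{Var}(T_n)=h^{-1}\mathrm{Var}(Y_1)+2h^{-1}\sum_{j=1}^{n-1}(1-j/n)\mathrm{Cov}(Y_1,Y_{1+j})$. A change of variables $x\mapsto x_0+hu$ and the bound on $f$ give $h^{-1}\mathrm{Var}(Y_1)=f(x_0)\nu_2(1+O(h))$. For the covariance sum I would combine two bounds: $|\mathrm{Cov}(Y_1,Y_{1+j})|\le Ch^2$ (from $\mathrm{Cov}(Y_1,Y_{1+j})=\expn[Y_1Y_{1+j}]$, the change of variables, and the uniform bound on the bivariate densities $f_j$) and $|\mathrm{Cov}(Y_1,Y_{1+j})|\le C\alpha(j)\le Cj^{-\beta}$ (the strong-mixing covariance inequality for bounded variables). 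Splitting the sum at $j\asymp h^{-2/\beta}$ gives $\sum_j|\mathrm{Cov}(Y_1,Y_{1+j})|=O(h^{2(\beta-1)/\beta})$, whence $\sigma_n^2=f(x_0)\nu_2+O(h^{(\beta-2)/\beta})$. Passing from $\Phi((y-B_n)/\sigma_n)$ to $\Phi((y-B_n)/\sqrt{f(x_0)\nu_2})$ then costs exactly the $O(h^{(\beta-2)/\beta})$ term, by Lipschitz continuity of $\Phi$ and $|y-B_n|=O(1)$.

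\emph{Cumulant bounds and the normal approximation.} The core is a quantitative CLT for $T_n/\sigma_n$ with the stated remainder. I would work through the characteristic function $\psi_n(t)=\expn\exp(itT_n/\sigma_n)$ and Esseen's smoothing lemma, which reduces matters to (i) bounding $\int_{|t|\le A}|t|^{-1}|\psi_n(t)-e^{-t^2/2}|\,dt$ for a suitable $A$, and (ii) bounding $\sup_{A\le|t|\le B_n^{*}}|\psi_n(t)|$ at larger frequencies. For (i) I would expand $\log\psi_n(t)=-t^2/2+\sum_{d\ge3}(it)^d\kappa_d(T_n)/(d!\,\sigma_n^d)$ and control the tail of the cumulant series using asymptotic bounds on $\kappa_d(T_n)=(nh)^{-d/2}\sum_{t_1,\ldots,t_d}\mathrm{cum}(Y_{t_1},\ldots,Y_{t_d})$. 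Here I would organize the index sum by its pattern of coincidences and by clustering the distinct time points into groups separated by more than a threshold $m$, bounding (a) within-cluster contributions by powers of $h$ from the change of variables against the bounded joint densities (noting that, since $\expn Y_i=0$, singleton blocks vanish and only blocks of size $\ge2$ survive), and (b) across-cluster contributions by a strong-mixing cumulant inequality of the form $|\mathrm{cum}(Y_{t_1},\ldots,Y_{t_d})|\le C_d\|Y_1\|_\infty^{d}\alpha(m)^{1-\epsilon}$; optimizing the threshold $m=m(\beta,d)$ against $\alpha(m)=O(m^{-\beta})$ is what produces the exponent $\gamma_0(\beta,d)$, and taking the worst case over $d\ge3$ (the series in $d$ being summable, at a cost of only an $n^{\delta}$ factor) produces $g_0(\beta)$ and the term $n^{-1/2+\delta}h^{-(3\beta+3)/(2\beta-2)+g_0(\beta)}$. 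For (ii) I would use a big-block/small-block split of $\sum Y_i$ into $\lfloor n/(p+q)\rfloor$ blocks of length $p$ separated by gaps of length $q$, apply the Volkonskii--Rozanov lemma to decouple the big blocks at total cost $O((n/p)\alpha(q))$, and then invoke a quantitative bound on the modulus of the characteristic function of a sum of independent, bounded, non-degenerate summands; optimizing $(p,q)$ against $\alpha(q)=O(q^{-\beta})$, exactly as in proofs of Berry--Esseen bounds for strong-mixing sequences, yields the contribution $n^{-(\beta-1)/(4\beta+2)}h^{-(\beta+1)/(4\beta+2)}$. The hypothesis $nh^3\to\infty$ is precisely what makes this term, and hence the whole remainder, vanish for every $\beta>2$.

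\emph{Assembly and the main obstacle.} Combining (i) and (ii) gives $\sup_y|\prob(T_n/\sigma_n\le y)-\Phi(y)|=O(n^{-(\beta-1)/(4\beta+2)}h^{-(\beta+1)/(4\beta+2)}+n^{-1/2+\delta}h^{-(3\beta+3)/(2\beta-2)+g_0(\beta)})$; then $\prob(S_n\le y)=\prob(T_n/\sigma_n\le(y-B_n)/\sigma_n)$, and substituting the leading values of $\sigma_n$ and $B_n$ (with the $O(h^{(\beta-2)/\beta})$ and $O(n^{-2/5})$ substitution errors absorbed into the displayed remainder) yields the claimed expansion. The routine parts are the Taylor and change-of-variables estimates and the covariance bookkeeping; the genuine obstacle is step (i) — obtaining cumulant bounds for the triangular array $\{Y_i\}$ that are simultaneously sharp in the bandwidth $h$ (tracking the number of surviving blocks of size $\ge2$ and the number of clusters) and in the mixing exponent $\beta$, and then carrying the two-fold optimization, over the clustering threshold and over the cumulant order $d$, through to the explicit functions $\gamma_0(\beta,d)$ and $g_0(\beta)$. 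Showing that the resulting series in $d$ is summable up to an $n^{\delta}$ loss, and that the large-$|t|$ characteristic-function bound can be pushed far enough to match rather than dominate this contribution, is the delicate accounting at the heart of the argument.
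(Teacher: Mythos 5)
Your overall architecture is sound and runs parallel to the paper's in several places: the bias Taylor expansion, the variance computation with the covariance sum split at $j\asymp h^{-2/\beta}$ (exactly the choice $m_n=[h^{-2/\beta}]$ in the paper's Lemma~\ref{lem:cum-mom}), and the recognition that everything hinges on bounds for the higher-order cumulants combined with a blocking-plus-mixing decoupling. One organizational difference is worth recording: the paper does not go through Esseen's smoothing lemma on the full characteristic function. It performs the big-block/small-block split at the level of the random variable itself, discards $V_S$ and $V_R$ at the probability level via their variances, and establishes normality of the big-block sum $V_L$ purely from its cumulants; the first error term $n^{-(\beta-1)/(4\beta+2)}h^{-(\beta+1)/(4\beta+2)}$ arises from balancing $\mathrm{Var}(V_S)^{1/2}=\xi^{1/2}\lambda^{-1/2}$ against the decoupling cost $n\xi^{-\beta}\lambda^{-1}$ at $\lambda\propto(nh)^{1/2}$, $\xi\propto(n^3h^{-1})^{1/(4\beta+2)}$ --- not from a large-frequency characteristic-function estimate. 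Your Tikhomirov-style route could in principle deliver the same exponent, but that equivalence is asserted rather than checked.

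The genuine gap is your proposed derivation of $\gamma_0(\beta,d)$. You plan to obtain it by ``optimizing the threshold $m=m(\beta,d)$ against $\alpha(m)=O(m^{-\beta})$,'' i.e.\ a one-parameter, one-step clustering optimization. That cannot produce $\gamma_0(\beta,d)$: the power $(\beta-1)/(\beta-d+1)$ in its definition is the signature of the \emph{fixed point of an iterated recursion}, not of a single threshold choice. The paper's Lemma~\ref{lem:cum-mom} introduces a cascade of scales $n=n_0\gg n_1\gg\cdots\gg n_S$, bounds $n_s^{d-1}\kappa_{n_s,d,h}$ by $n_{s+1}^{d-1}\kappa_{n_{s+1},d,h}$ plus a mixing cost $h^{-d}n_s^{d-2}n_{s+1}^{-\beta+1}$, optimizes each $n_{s+1}$ given the current exponent pair, and lets $S\to\infty$ so that $(\alpha_s,\gamma_s)$ converges to the fixed point of $\alpha_s=(d-2)\alpha_{s+1}/(\alpha_{s+1}+\beta-1)$, $\gamma_s=\{\gamma_{s+1}(\beta-1)+\alpha_{s+1}\}/(\alpha_{s+1}+\beta-1)$ started from $(\alpha_S,\gamma_S)=(d-1,0)$. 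A single split corresponds to the first iterate only: it leaves the cumulant bound carrying a residual polynomial factor $n^{\alpha_{S-1}}$ with $\alpha_{S-1}=(d-1)(d-2)/(\beta+d-2)$, far from the $n^{\delta}$ (arbitrarily small power) that the third error term of the theorem requires; only by iterating until the exponent recursion reaches its fixed point does one obtain $n^{\delta}h^{-d\gamma_0(\beta,d)}$ and hence, after optimizing over $d\ge 3$, the exponent $g_0(\beta)$. Since you yourself identify this cumulant bound as the heart of the argument, the missing multiscale iteration is a missing idea rather than a routine detail.
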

It follows from Theorem~\ref{thm:nonbt} that the distribution of  $(nh)^{1/2}\big\{\hat{f}_h(x_0)-f(x_0)\big\}$
deviates from normality by a difference with minimum order
\begin{equation}
O\left(n^{-(\beta-1)(\beta-2)/(5\beta^2-5\beta-4)}\right),
\label{min.order}
\end{equation}
attained at $h\propto n^{-\beta(\beta-1)/(5\beta^2-5\beta-4)}$.

\begin{thm}
\label{thm:nonbt.expo}
Suppose that $\alpha(t)=O(e^{-Ct})$ as $t\rightarrow\infty$, for some $C>0$, and that
$nh\rightarrow\infty$ and $h=O(n^{-1/5})$.
Then, for any fixed $y\in\mathbb{R}$,
\begin{eqnarray*}
\lefteqn{
\prob\Big((nh)^{1/2}\big\{\hat{f}_h(x_0)-f(x_0)\big\}\le y\Big)}\\&=&
\Phi\left(\dfrac{y-n^{1/2}h^{5/2}f''(x_0)\mu_2/2}{\sqrt{f(x_0)\nu_2}}\right)+O\big\{h\log n+(nh)^{-1/2}\big\}.
\end{eqnarray*}
\end{thm}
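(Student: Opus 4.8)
\medskip\noindent\textbf{Proof proposal.}
The plan is to follow the same architecture as the proof of Theorem~\ref{thm:nonbt}: decompose the centred, scaled estimator into a deterministic bias term and a mean-zero stochastic term, develop asymptotic expansions for the cumulants of the latter, and convert them into a normal approximation by a smoothing argument. The one structural difference is that the exponential decay of $\alpha(t)$ lets every mixing-truncation radius be taken of order $\log n$ rather than a power of $n$, and this is precisely what collapses the three error contributions of Theorem~\ref{thm:nonbt} to the two clean terms $h\log n$ and $(nh)^{-1/2}$. Write $(nh)^{1/2}\{\hat f_h(x_0)-f(x_0)\}=S_n+\beta_n$, with $\beta_n=(nh)^{1/2}\{\expn\hat f_h(x_0)-f(x_0)\}$ and $S_n=(nh)^{-1/2}\sum_{i=1}^n V_i$, $V_i=K((X_i-x_0)/h)-\expn K((X_i-x_0)/h)$. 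I would first dispose of $\beta_n$: a Taylor expansion of $\expn\hat f_h(x_0)=\int K(u)f(x_0+hu)\,du$ about $x_0$, in which symmetry of $K$ removes the odd-order terms and four-times differentiability of $f$ at $x_0$ controls the remainder, gives $\expn\hat f_h(x_0)-f(x_0)=\tfrac12 h^2 f''(x_0)\mu_2+r_n$ with $(nh)^{1/2}r_n=o(h)$ when $h=O(n^{-1/5})$; hence $\beta_n=\tfrac12 n^{1/2}h^{5/2}f''(x_0)\mu_2+o(h)$, and $\beta_n=O(1)$.

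The core of the argument is the cumulant analysis of $S_n$. For the variance, $\kappa_2(S_n)=(nh)^{-1}\{n\,\expn V_1^2+2\sum_{t=1}^{n-1}(n-t)\,\mathrm{Cov}(V_1,V_{1+t})\}$; the change of variable $x=x_0+hu$ yields $\expn V_1^2=hf(x_0)\nu_2\{1+O(h)\}$, and each lagged covariance obeys both $|\mathrm{Cov}(V_1,V_{1+t})|\le Ch^2$ (uniform boundedness of the joint densities) and $|\mathrm{Cov}(V_1,V_{1+t})|\le 4\|K\|_\infty^2\alpha(t)$ (mixing inequality for bounded functionals). Using the first estimate for $t\le T$ and the second for $t>T$ with $T\asymp\log n$ renders the tail polynomially small and the head $O(nh^2\log n)$, so $\kappa_2(S_n)=f(x_0)\nu_2\{1+O(h\log n)\}$; this interplay between exponential (but not finite-range) mixing and the vanishing bandwidth is the sole source of the $h\log n$ term. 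For $j\ge 3$ I would establish $\kappa_j(S_n)=O\!\big((nh)^{-(j-2)/2}\big)$. The \emph{diagonal} contributions, where the $j$ time indices lie within a window of bounded length, reduce to the i.i.d.\ quantity $n\,\mathrm{cum}_j(V_1)\{1+o(1)\}$; since $\mathrm{cum}_j(V_1)=O(h)$, they are of order $(nh)^{1-j/2}$ after the $(nh)^{-j/2}$ normalisation. The \emph{off-diagonal} contributions, where the indices split into groups separated by large gaps, are estimated by Davydov/Rio-type covariance and cumulant inequalities in which one uses the small moments $\|V_i\|_p=O(h^{1/p})$ in place of the $O(1)$ sup-norm; interpolating the one-sided estimates so as to charge every separation between the index groups, and invoking $\sum_g\alpha(g)^c<\infty$ for every $c>0$, the summation over index configurations stays at the diagonal order $O(nh)$.

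Given these cumulant bounds, a standard smoothing/characteristic-function argument---with the moderate-frequency range handled by a big-block/small-block decomposition whose small-block gaps are again of order $\log n$---gives $\sup_t\big|\prob\big(S_n/\kappa_2(S_n)^{1/2}\le t\big)-\Phi(t)\big|=O((nh)^{-1/2})$, the leading departure from $\Phi$ being the skewness term of order $\bar\kappa_3=O((nh)^{-1/2})$ and all further terms smaller. It then remains to reassemble: from $\prob\big((nh)^{1/2}\{\hat f_h(x_0)-f(x_0)\}\le y\big)=\prob\big(S_n/\kappa_2(S_n)^{1/2}\le (y-\beta_n)/\kappa_2(S_n)^{1/2}\big)$, together with $y-\beta_n=O(1)$, $\kappa_2(S_n)^{1/2}=\sqrt{f(x_0)\nu_2}\{1+O(h\log n)\}$ and the Lipschitz property of $\Phi$, the argument of $\Phi$ may be replaced by $(y-\tfrac12 n^{1/2}h^{5/2}f''(x_0)\mu_2)/\sqrt{f(x_0)\nu_2}$ at a cost $O(h\log n)$, which delivers the stated expansion.

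The step I expect to be the main obstacle is the off-diagonal cumulant estimate: one must show that the joint cumulants of the bounded-but-$L^p$-small variables $V_i$ decay rapidly enough in the separations between index groups that, once summed against the combinatorial growth of configurations, they never exceed the diagonal (i.i.d.) order $(nh)^{1-j/2}$. This is exactly where exponential mixing is used in an essential way---it leaves enough room that $\alpha(g)^c$ remains summable for arbitrarily small $c>0$ after the $L^p$ bookkeeping---and is also why the rate $(nh)^{-1/2}$ here is free of the powers of $n$ that appear in the polynomial-mixing Theorem~\ref{thm:nonbt}. (A Cram\'er/G\"otze--Hipp-type conditional regularity condition would be needed only for a sharper, fully explicit Edgeworth expansion, not for the normal approximation asserted here.)
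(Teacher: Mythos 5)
Your proposal is correct and follows the same architecture as the paper's argument: a Taylor expansion of the bias, cumulant expansions of the centred sum in which every mixing truncation radius is taken of order $\log n$ (yielding the variance perturbation $O(h\log n)$ and the bound $O\big((nh)^{-(d-2)/2}\big)$ for the normalised cumulants of order $d\ge 3$), and a conversion of these bounds into the normal approximation with error $O\{h\log n+(nh)^{-1/2}+n^{1/2}h^{9/2}\}$, the last term being absorbed since $h=O(n^{-1/5})$. The one place where you genuinely diverge is the higher-order cumulant bound: you propose a direct diagonal/off-diagonal decomposition of the joint cumulant sum, controlled by Davydov--Rio-type inequalities with the $L^p$ bookkeeping $\|V_i\|_p=O(h^{1/p})$, whereas the paper (proof of Lemma~\ref{lem:cum-mom.expo}) obtains $\kappa_{n,d,h}=O\{(nh)^{-d+1}\}$ in one sub-additivity step, $n^{d-1}\kappa_{n,d,h}=O\big(k_n^{d-1}\kappa_{k_n,d,h}+h^{-d}n^{d-2}e^{-Ck_n}\big)$ with $k_n=[n^{\epsilon}]$, which sidesteps the configuration combinatorics entirely because exponential mixing annihilates the remainder in a single blocking. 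Your route through the off-diagonal estimates is more laborious but standard and would go through; the paper's is shorter and avoids precisely the step you flag as the main obstacle. On the final step you are in fact more explicit than the paper, which passes from the cumulant bounds to the distributional statement without restating the smoothing/blocking argument (it is carried over from the proof of Theorem~\ref{thm:nonbt}); your Esseen-type treatment with small blocks of length $O(\log n)$ is a legitimate way of filling that in.
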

It follows from Theorem~\ref{thm:nonbt.expo} that the distribution of  $(nh)^{1/2}\big\{\hat{f}_h(x_0)-f(x_0)\big\}$
deviates from normality by a difference with minimum order $O\big\{n^{-1/3}(\log n)^{1/3}\big\}$,
attained at $h\propto n^{-1/3}(\log n)^{-2/3}$.

\subsection{Block bootstrap estimation}
We consider a general class of moving block bootstrap estimators of the distribution of
\[T_h\triangleq
(nh)^{1/2}\big\{\hat{f}_h(x_0)-f(x_0)\big\}. \]

Define, for $k>0$, $\ell\in\{1,2,\ldots,n\}$ and $b\in\{1,2,\ldots\}$,
$J_1,\ldots,J_b$ to be independent random indices uniformly drawn from the
set $\{1,\ldots,n-\ell+1\}$,
\begin{gather*}
U_{i,k,\ell}=(\ell k)^{-1}\sum_{t=i}^{i+\ell-1}K\big((X_t-x_0)/k\big),\;\;
i=1,\ldots,n-\ell+1,\\
U^*_{i,k,\ell}=U_{J_i,k,\ell},\;\;
i=1,\ldots,b,\qquad\text{and}\qquad
\hat{f}^*_{b,\ell,k}=b^{-1}\sum_{i=1}^bU^*_{i,k,\ell}.
\end{gather*}
Note that $\expn\big[\hat{f}^*_{b,\ell,k}\big|X_1,\ldots,X_n\big]=(n-\ell+1)^{-1}\sum_{i=1}^{n-\ell+1}U_{i,k,\ell}$.

Define, for $\tau,k_1,k_2,k_3>0$,
\begin{multline*}
T^*_{b,\ell,\tau,k_1,k_2,k_3}=(b\ell k_1)^{1/2}\left\{\hat{f}^*_{b,\ell,k_1}-\expn\big[\hat{f}^*_{b,\ell,k_1}\big|X_1,\ldots,X_n\big]\right\}\\
+\tau\left\{\expn\big[\hat{f}^*_{b,\ell,k_2}\big|X_1,\ldots,X_n\big]-\hat{f}_{k_3}(x_0)\right\},
\end{multline*}
in which the second term $\tau\big\{\expn\big[\hat{f}^*_{b,\ell,k_2}\big|X_1,\ldots,X_n\big]-\hat{f}_{k_3}(x_0)\big\}$
can be regarded as a bias correction factor.

Define, for $\beta>2$,
\begin{align*}
g_1(\beta)&=(1/3)\inf\left\{d\big(2-\gamma_0(\beta,d)\big):d\ge 3\right\}-1,\\
g_2(\beta)&=(1/3)\inf\left\{d\big(2-\max\{2\gamma_0(\beta,2d)-1,0\}\big):d\ge 3\right\}-1.
\end{align*}
Plots of $g_1(\beta)$ (blue) and $g_2(\beta)$ (red) against $\beta$ on the log-scale are shown below.
\begin{center}
\includegraphics[scale=0.35]{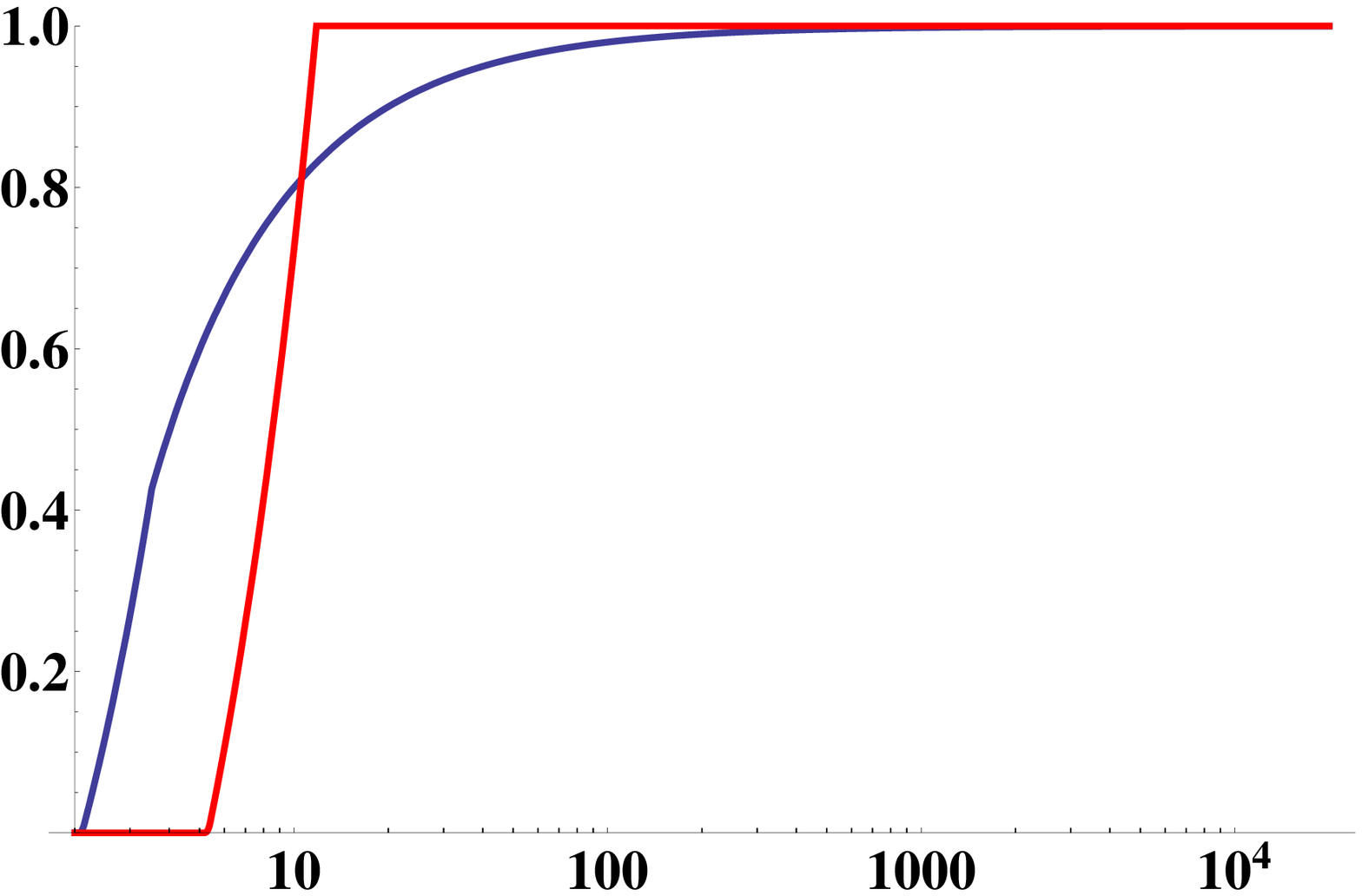}
\end{center}

\begin{thm}
\label{thm:boot}
Assume that $\alpha(t)=O(t^{-\beta})$ as $t\rightarrow\infty$, for some $\beta>2$.
Suppose that $b\ell=O(n)$, $\ell=O(bk_1)$ and
\[n^{-1}\ell+(\ell k_1)^{-1}+k_1+(nk_2)^{-1}+k_2+(nk_3)^{-1}+k_3=O(n^{-\epsilon}),\]
for some arbitrarily small $\epsilon>0$.
Let $y\in\mathbb{R}$ be fixed and $\delta>0$ be any arbitrarily small constant. Then
\begin{multline*}
\prob\big(T^*_{b,\ell,\tau,k_1,k_2,k_3}\le y\big|X_1,\ldots,X_n\big)=
\Phi\left(\dfrac{y-\tau(k_2^2-k_3^2)f''(x_0)\mu_2/2}{\sqrt{f(x_0)\nu_2}}\right)\\
+\,O_p\Big\{
k_1^{(\beta-2)/\beta}+b^{-1/2}\ell^{-1/2+\delta}k_1^{-3/2+3g_1(\beta)}+n^{-1/2+\delta}b^{-1/2}\ell^{1/2}k_1^{-3(1-g_2(\beta))/2}\\
+\,\tau\big[k_2^4+k_3^4+(nk_2)^{-1/2}+(nk_3)^{-1/2}\big]\Big\}.
\end{multline*}
\end{thm}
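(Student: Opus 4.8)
\emph{Proof proposal.} The plan is to split $T^*_{b,\ell,\tau,k_1,k_2,k_3}$ into the genuinely random resampling part and the data-measurable bias-correction part and to treat them separately. Write $T^*=T_1^*+T_2^*$, where, using $\expn[\hat{f}^*_{b,\ell,k}\mid X_1,\ldots,X_n]=\bar U_{k,\ell}$ with $\bar U_{k,\ell}=(n-\ell+1)^{-1}\sum_{j=1}^{n-\ell+1}U_{j,k,\ell}$,
\[
T_1^*=(b\ell k_1)^{1/2}\big\{\hat{f}^*_{b,\ell,k_1}-\bar U_{k_1,\ell}\big\}=(\ell k_1)^{1/2}b^{-1/2}\sum_{i=1}^b\big(U^*_{i,k_1,\ell}-\bar U_{k_1,\ell}\big),\qquad
T_2^*=\tau\big\{\bar U_{k_2,\ell}-\hat f_{k_3}(x_0)\big\}.
\]
Since $T_2^*$ is $\sigma(X_1,\ldots,X_n)$-measurable, $\prob(T^*\le y\mid X_1,\ldots,X_n)=\prob(T_1^*\le y-T_2^*\mid X_1,\ldots,X_n)$, so it suffices to (i) establish a conditional normal approximation for $T_1^*$ with a random variance that concentrates at $f(x_0)\nu_2$; (ii) show $T_2^*=\tau(k_2^2-k_3^2)f''(x_0)\mu_2/2+O_p(\tau[k_2^4+k_3^4+(nk_2)^{-1/2}+(nk_3)^{-1/2}])$; and (iii) substitute, using the global Lipschitz property of $\Phi$ to absorb the two replacement errors.

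For the bias term, interchanging the order of summation gives $\bar U_{k_2,\ell}=\hat f_{k_2}(x_0)$ plus a boundary correction that involves only the first and last $\ell-1$ observations; this correction has mean of order $\ell/n$ and smaller stochastic order, and the hypotheses $b\ell=O(n)$, $\ell=O(bk_1)$ together with the uniform smallness condition force $\ell/n=O(k_1^{(\beta-2)/\beta})$, so it is absorbed. It remains to expand $\hat f_{k_2}(x_0)-\hat f_{k_3}(x_0)$: four-times continuous differentiability of $f$ at $x_0$ and symmetry of $K$ give mean $(k_2^2-k_3^2)f''(x_0)\mu_2/2+O(k_2^4+k_3^4)$, while the standard variance bound $\mathrm{Var}(\hat f_k(x_0))=O((nk)^{-1})$ for kernel density estimators under strong mixing gives fluctuation $O_p((nk_2)^{-1/2}+(nk_3)^{-1/2})$. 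Multiplying by $\tau$ yields the $\tau[\,\cdot\,]$ block of the error.

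For $T_1^*$, conditionally on $X_1,\ldots,X_n$ the summands $U^*_{1,k_1,\ell},\ldots,U^*_{b,k_1,\ell}$ are i.i.d.\ bounded variables uniform on $\{U_{1,k_1,\ell},\ldots,U_{n-\ell+1,k_1,\ell}\}$, so $T_1^*$ is a normalized sum of $b$ i.i.d.\ terms with conditional mean $0$ and conditional variance $\widehat V\triangleq\ell k_1\widehat\sigma^2$, $\widehat\sigma^2=(n-\ell+1)^{-1}\sum_j(U_{j,k_1,\ell}-\bar U_{k_1,\ell})^2$. A Berry--Esseen/cumulant--Edgeworth argument for i.i.d.\ sums — which requires no Cram\'er condition, since only a normal, not a polynomially corrected, approximation is claimed — gives $\prob(T_1^*\le y\mid X_1,\ldots,X_n)=\Phi(y/\widehat V^{1/2})+O_p(b^{-1/2}\widehat\rho_3/\widehat\sigma^3)$ with $\widehat\rho_3=(n-\ell+1)^{-1}\sum_j|U_{j,k_1,\ell}-\bar U_{k_1,\ell}|^3$; once $\widehat\sigma^2$ is bounded below, the constraint $\ell=O(bk_1)$ with $(\ell k_1)^{-1}=O(n^{-\epsilon})$ makes this term $o_p(k_1^{(\beta-2)/\beta})$, hence negligible. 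The substance is the analysis of $\widehat V$, which must be shown to equal $f(x_0)\nu_2$ up to the stated rate. For this one expands each $U_{j,k_1,\ell}$ back into its $\ell$ kernel evaluations $K((X_t-x_0)/k_1)$ and controls the mean and the fluctuation of $\widehat\sigma^2$ (and of $\widehat\rho_3$ and of the higher empirical cumulants of the $U_{j,k_1,\ell}$) through the resulting multi-index sums of joint cumulants, using the bounded-joint-density assumption to bound $|\mathrm{cum}(K((X_{t_1}-x_0)/k_1),\ldots)|$ by a power of $k_1$ and Davydov-type mixing inequalities to obtain a competing bound in terms of $\alpha(\cdot)$ evaluated at the largest gap. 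Organizing these sums by the number $d$ of indices in each ``clustered'' group and optimizing the truncation level in each resulting geometric-type sum is exactly what produces the exponents $\gamma_0(\beta,d)$, hence $g_1(\beta)$ (governing the conditional third and higher cumulants and the bias of $\widehat\sigma^2$) and $g_2(\beta)$ (governing the fluctuation of $\widehat\sigma^2$, a quadratic functional of the weakly dependent, strongly overlapping block statistics). This gives $\widehat V=f(x_0)\nu_2+O_p(k_1^{(\beta-2)/\beta}+b^{-1/2}\ell^{-1/2+\delta}k_1^{-3/2+3g_1(\beta)}+n^{-1/2+\delta}b^{-1/2}\ell^{1/2}k_1^{-3(1-g_2(\beta))/2})$; replacing $\widehat V^{1/2}$ by $\sqrt{f(x_0)\nu_2}$ via the mean value theorem costs $O_p(|\widehat V-f(x_0)\nu_2|)$, and combining with steps (i) and (ii) yields the claimed expansion.

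I expect the main obstacle to be precisely the sharp higher-cumulant and variance-fluctuation bounds for the block-averaged kernel statistic under polynomial $\alpha$-mixing: organizing the $r$-fold time-index sums according to their gap structure, applying the appropriate mixing inequality to each configuration, and solving the attendant optimization over truncation thresholds is what generates the functions $\gamma_0,g_1,g_2$, and obtaining their exponents exactly (rather than merely up to an arbitrarily small loss) is delicate — in particular the fluctuation of $\widehat\sigma^2$ forces one to handle a quadratic form in dependent, heavily overlapping summands. A secondary but still nontrivial task is the bookkeeping to check that every boundary contribution (from the first and last $\ell-1$ observations, and from $\bar U_{k,\ell}$ versus $\hat f_k(x_0)$) and every lower-order remainder is genuinely dominated, under the stated constraints, by the four error blocks displayed in the statement.
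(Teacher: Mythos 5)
Your architecture is the right one and matches the paper's: isolate the data-measurable bias-correction term $T_2^*$ and expand it (this part of your argument is essentially the paper's derivation of (\ref{pf:thm3.1}) from (\ref{bt.cum1}), and yields the $\tau[\cdot]$ block correctly — though note your claim that the boundary discrepancy between $\bar U_{k_2,\ell}$ and $\hat f_{k_2}(x_0)$ is absorbed because ``$\ell/n=O(k_1^{(\beta-2)/\beta})$'' does not follow from the hypotheses; it is moot only because that discrepancy has mean exactly zero by stationarity and its fluctuation is covered by the $(nk_2)^{-1/2}$ term). Likewise, your identification of the hard technical core — cumulant bounds for the overlapping block statistics $U_{i,k_1,\ell}$ under polynomial mixing, with the truncation optimization generating $\gamma_0$, $g_1$, $g_2$ — is exactly what Lemmas \ref{lem:cum-mom} and \ref{lem:bt.cum} do.

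The genuine gap is in your treatment of the conditional CLT for $T_1^*$, and it consists of a false dichotomy: you claim (a) the Berry--Esseen remainder $b^{-1/2}\hat\rho_3/\hat\sigma^3$ is $o_p(k_1^{(\beta-2)/\beta})$ as a consequence of $\ell=O(bk_1)$, $(\ell k_1)^{-1}=O(n^{-\epsilon})$ and a lower bound on $\hat\sigma^2$ alone, and (b) the two middle error blocks involving $g_1(\beta)$ and $g_2(\beta)$ arise from $\widehat V-f(x_0)\nu_2$. Neither is right. For (a), the only bound on $\hat\rho_3$ available from the stated constraints is $\hat\rho_3\le\|U-\bar U\|_\infty\hat\sigma^2=O(k_1^{-1}\hat\sigma^2)$, which gives $b^{-1/2}\hat\rho_3/\hat\sigma^3=O\big((\ell/(bk_1))^{1/2}\big)=O(1)$ — bounded, not small; any sharper bound on the standardized absolute third moment requires the fourth-moment analysis through $\mu_{\ell,4,k_1}$ and, because an absolute moment cannot exploit the cancellation that a cumulant can, delivers terms of exactly the $g_1$-type order (or worse), not $o_p(k_1^{(\beta-2)/\beta})$. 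For (b), the paper's (\ref{bt.cum2}) shows that the conditional variance error after normalization is only $O_p(k_1^{1-2/\beta})$, i.e.\ the \emph{first} block; the $g_1$ and $g_2$ blocks come instead from the conditional cumulants of order $d\ge 3$ of the normalized sum, via (\ref{bt.cumd}) and the computation (\ref{pf:thm3.3}): $g_1$ tracks the systematic part $\kappa_{\ell,d,k_1}$ of the empirical $d$-th cumulant of the blocks, and $g_2$ tracks its sampling fluctuation through $\mu_{\ell,2d,k_1}$. In other words, the non-Gaussianity of the conditional resampling distribution is precisely what generates the two middle error blocks, and your proposal asserts it away while separately (and too generously) charging those blocks to the variance. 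To repair the argument you would replace the Berry--Esseen step by a cumulant-based smoothing/Edgeworth bound in which all conditional cumulants of order $d\ge 3$ are controlled uniformly in $d$ — which is the content of Lemma \ref{lem:bt.cum} — and keep their contribution as the stated error terms rather than claiming it is negligible.
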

\begin{thm}
\label{thm:boot.expo}
Assume that $\alpha(t)=O(e^{-Ct})$ as $t\rightarrow\infty$, for some $C>0$.
Suppose that $b\ell=O(n)$ and
\[n^{-1}\ell+(\ell k_1)^{-1}+k_1+(nk_2)^{-1}+k_2+(nk_3)^{-1}+k_3=o(1).\]
Let $y\in\mathbb{R}$ be fixed. Then
\begin{multline*}
\prob\big(T^*_{b,\ell,\tau,k_1,k_2,k_3}\le y\big|X_1,\ldots,X_n\big)=
\Phi\left(\dfrac{y-\tau(k_2^2-k_3^2)f''(x_0)\mu_2/2}{\sqrt{f(x_0)\nu_2}}\right)\\
+\,O_p\Big\{k_1\log\ell+n^{-1/2}\ell^{1/2}+(b\ell k_1)^{-1/2}+\tau\big[k_2^4+k_3^4+(nk_2)^{-1/2}+(nk_3)^{-1/2}\big]\Big\}.
\end{multline*}
\end{thm}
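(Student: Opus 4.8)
The plan is to argue conditionally on $X_1,\ldots,X_n$ and to split the bootstrap statistic into a resampling-random piece and a data-measurable shift. Write $T^*_{b,\ell,\tau,k_1,k_2,k_3}=T^*_1+T^*_2$, where
\[
T^*_1=(b\ell k_1)^{1/2}\big\{\hat{f}^*_{b,\ell,k_1}-\expn[\hat{f}^*_{b,\ell,k_1}\mid X_1,\ldots,X_n]\big\},\qquad
T^*_2=\tau\big\{\expn[\hat{f}^*_{b,\ell,k_2}\mid X_1,\ldots,X_n]-\hat{f}_{k_3}(x_0)\big\}.
\]
Because $T^*_2$ is $\sigma(X_1,\ldots,X_n)$-measurable, $\prob(T^*\le y\mid X_1,\ldots,X_n)=\prob(T^*_1\le y-T^*_2\mid X_1,\ldots,X_n)$. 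Conditionally on the data, $T^*_1=(\ell k_1)^{1/2}b^{-1/2}\sum_{i=1}^b(U_{J_i,k_1,\ell}-\bar U)$, where $\bar U=(n-\ell+1)^{-1}\sum_{i=1}^{n-\ell+1}U_{i,k_1,\ell}$, is $b^{-1/2}$ times a sum of $b$ i.i.d.\ mean-zero variables with conditional variance $\ell k_1\hat\sigma^2$, $\hat\sigma^2=(n-\ell+1)^{-1}\sum_{i=1}^{n-\ell+1}(U_{i,k_1,\ell}-\bar U)^2$, and conditional higher cumulants equal to the corresponding empirical moments of the $U_{i,k_1,\ell}-\bar U$.

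First I would dispose of the shift $T^*_2$. Rearranging the double sum in $\expn[\hat{f}^*_{b,\ell,k_2}\mid X_1,\ldots,X_n]=(n-\ell+1)^{-1}\sum_i U_{i,k_2,\ell}$ over blocks and over time points shows this equals $\hat{f}_{k_2}(x_0)$ up to a reweighting of the $O(\ell)$ extreme observations whose contribution to $T^*_2$ is negligible relative to $\tau(nk_2)^{-1/2}$. The standard kernel bias expansion ($f\in C^4$ near $x_0$, symmetry of $K$) gives $\expn[\hat{f}_k(x_0)]=f(x_0)+k^2\mu_2 f''(x_0)/2+O(k^4)$, and the whitening-by-windowing variance bound under mixing gives $\mathrm{Var}(\hat{f}_k(x_0))=O((nk)^{-1})$. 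Hence $T^*_2=\tau(k_2^2-k_3^2)\mu_2 f''(x_0)/2+O_p\big(\tau[k_2^4+k_3^4+(nk_2)^{-1/2}+(nk_3)^{-1/2}]\big)$, supplying exactly the location shift in the argument of $\Phi$ and the $\tau$-part of the error.

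The core is a conditional Edgeworth-type expansion for $T^*_1$, which requires controlling the data-dependent conditional cumulants of $U_{1,k_1,\ell}$. Writing $U_{1,k_1,\ell}-\expn U_{1,k_1,\ell}=(\ell k_1)^{-1}\sum_{t=1}^{\ell}\{K((X_t-x_0)/k_1)-\expn K((X_1-x_0)/k_1)\}$ and expanding the variance of this sum of $\ell$ weakly dependent kernel evaluations, the leading term is $(\ell k_1)^{-1}f(x_0)\nu_2$ and the mixing-covariance remainder is $O((\ell k_1)^{-1}k_1\log\ell)$; this is where exponential mixing matters, since combining $|\mathrm{cov}(K((X_0-x_0)/k_1),K((X_j-x_0)/k_1))|=O(k_1^2)$ with $\alpha(j)=O(e^{-Cj})$ gives $\sum_{j\ge1}|\mathrm{cov}|=O(k_1^2\log(1/k_1))=O(k_1^2\log\ell)$, with none of the polynomial-tail exponents that force the functions $g_0,g_1,g_2$ in Theorems~\ref{thm:nonbt} and \ref{thm:boot}. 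Likewise the $m$-th cumulant of $U_{1,k_1,\ell}$ is $O((\ell k_1)^{-(m-1)})$ for $m\ge2$, so its skewness is $O((\ell k_1)^{-1/2})$ and all higher standardized cumulants are no larger. A law of large numbers over the $\asymp n/\ell$ essentially independent disjoint blocks — using the uniform bounds on the joint densities $f_{i_1\cdots i_d}$ to handle covariances among terms of overlapping blocks and exponential mixing for separated blocks — then yields $\ell k_1\hat\sigma^2=f(x_0)\nu_2+O_p(k_1\log\ell+n^{-1/2}\ell^{1/2})$ together with the matching concentration of the empirical third and higher cumulants. Feeding these into the classical Edgeworth expansion for a normalized sum of $b$ i.i.d.\ variables — whose validity here also needs control of the conditional characteristic function of $U_{J_1,k_1,\ell}$ over a growing frequency range, the point being that the $n-\ell+1$ atoms $U_{i,k_1,\ell}$ are almost surely sufficiently spread out (in particular non-lattice) — gives, uniformly in $z$, $\prob(T^*_1\le z\mid X_1,\ldots,X_n)=\Phi\big(z(\ell k_1\hat\sigma^2)^{-1/2}\big)+O_p\big((b\ell k_1)^{-1/2}\big)$, since the skewness correction is $O_p((b\ell k_1)^{-1/2})$ and every subsequent term is of strictly smaller order.

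Finally I would put $z=y-T^*_2$, substitute the expansion of $T^*_2$, and replace $\ell k_1\hat\sigma^2$ by $f(x_0)\nu_2$ in the argument of $\Phi$ — legitimate since $\Phi$ is Lipschitz and $f(x_0)\nu_2$ is bounded away from $0$ — at a cost of $O_p(k_1\log\ell+n^{-1/2}\ell^{1/2})$; collecting the errors yields the claimed expansion. I expect the main obstacle to be the conditional cumulant analysis of the previous paragraph: in particular the second-moment bound $\mathrm{Var}(\hat\sigma^2)=O((\ell/n)(\ell k_1)^{-2})$ and its higher-cumulant analogues, which demand careful bookkeeping of covariances between overlapping blocks (via the sup-norm bounds on $f_{i_1\cdots i_d}$) and between separated blocks (via mixing), together with the attendant control of the conditional characteristic function needed to legitimize the Edgeworth step for the discretely supported bootstrap sum. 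Exponential mixing is precisely what keeps all of these estimates clean and collapses the combinatorial optimization over block dimension $d$ that polynomial mixing forces in Theorems~\ref{thm:nonbt} and \ref{thm:boot}.
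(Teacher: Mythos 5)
Your proposal follows essentially the same route as the paper's proof: it isolates the data-measurable bias-correction term and expands it exactly as in the paper's equation for $\expn[\hat{f}^*_{b,\ell,k_2}|X_1,\ldots,X_n]-\hat{f}_{k_3}(x_0)$, and it controls the conditional distribution of the centered resampling part through the conditional cumulants $\hat\kappa^{(d)}_{b,\ell,k_1}$, computed from the empirical moments of the block statistics $U_{i,k_1,\ell}$ with exactly the error orders $k_1\log\ell$, $n^{-1/2}\ell^{1/2}$ and $(b\ell k_1)^{-1/2}$ appearing in Lemma~\ref{lem:bt.cum.expo}. Your added remarks about controlling the conditional characteristic function to legitimize the Edgeworth step address a technical point the paper leaves implicit, but do not change the argument.
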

It is clear by comparing the leading terms of the expansions established by
Theorems~\ref{thm:nonbt} and \ref{thm:boot} that we should set
$\tau(k_2^2-k_3^2)=(nh^5)^{1/2}$ to estimate consistently
the leading bias term $n^{1/2}h^{5/2}f''(x_0)\mu_2/2$. Minimising the error term
$\tau\big[k_2^4+k_3^4+(nk_2)^{-1/2}+(nk_3)^{-1/2}\big]$ subject to the latter condition suggests the following
settings for $(\tau,k_2,k_3)$:
\begin{equation}
k_2\propto n^{-1/9},\qquad k_3=c_0k_2\qquad\text{and}\qquad \tau=(1-c_0^2)^{-1}n^{1/2}h^{5/2}k_2^{-2},
\label{bias.cond}
\end{equation}
for some fixed $c_0\in(0,1)$.
Write, for brevity, $\hat{T}^*_{b,\ell,k_1}=T^*_{b,\ell,\tau,k_1,k_2,k_3}$, with
$\tau,k_2,k_3$ satisfying (\ref{bias.cond}). This gives rise to the EBC subclass of block bootstrap estimators which incorporate an explicit bias correction factor.

Similarly, in the absence of explicit bias correction such that $k_1=k_2=k$ and $\tau=(b\ell k)^{1/2}$,
we should set $k$ and $k_3$ to satisfy,
for some fixed $c_0\in(0,1)$,
\begin{equation}
k=(1-c_0^2)^{-2/5}n^{1/5}(b\ell)^{-1/5}h\qquad\text{and}\qquad k_3=c_0k.
\label{bias.cond.nocorr}
\end{equation}
Write, for brevity, $\tilde{T}^*_{b,\ell}=T^*_{b,\ell,(b\ell k)^{1/2},k,k,k_3}$, with
$k,k_3$ satisfying (\ref{bias.cond.nocorr}). This gives rise to the NBC subclass of estimators which involve no explicit bias correction.

Setting $\tau=0$ and $k_1=k$ in $T^*_{b,\ell,\tau,k_1,k_2,k_3}$ reduces to the special case
\[\check{T}^*_{b,\ell,k}=(b\ell k)^{1/2}\big\{\hat{f}^*_{b,\ell,k}-\expn[\hat{f}^*_{b,\ell,k}|X_1,\ldots,X_n]\big\},\]
which is applicable only if the bias is asymptotically negligible, or more precisely, if $h=o(n^{-1/5})$. This gives rise to the UNS subclass of undersmoothed block bootstrap estimators. Recall that if we set $b=n$, $\ell=1$
and $k=h$, the method corresponds to undersmoothing studied by \citet{Hall:1992} in the i.i.d.\ case.

\subsection{Choice of $(b,\ell,k_1)$ for EBC}
Note that under \eqref{bias.cond}, we have 
\[ \tau\big[k_2^4+k_3^4+(nk_2)^{-1/2}+(nk_3)^{-1/2}\big]=O\big(n^{5/18}h^{5/2}\big)=o\big(\min\{n^{-1/5},h\}\big), \]
which is smaller than the minimum order (\ref{min.order}). It then follows 
from Theorems~\ref{thm:nonbt}--\ref{thm:boot.expo} that the estimation error of the
block bootstrap distribution, namely
\[\prob\big(\hat{T}^*_{b,\ell,k_1}\le \cdot\,\big|X_1,\ldots,X_n\big)-\prob(T_h\le \cdot\,),\]
has an order
\begin{multline}
O_p\Big(n^{-(\beta-1)/(4\beta+2)}h^{-(\beta+1)/(4\beta+2)}+h^{(\beta-2)/\beta}\\
+n^{-1/2+\delta}h^{-(3\beta+3)/(2\beta-2)+g_0(\beta)}
+k_1^{(\beta-2)/\beta}\\+b^{-1/2}\ell^{-1/2+\delta}k_1^{-3/2+3g_1(\beta)}+n^{-1/2+\delta}b^{-1/2}\ell^{1/2}k_1^{-3(1-g_2(\beta))/2}\Big)
\label{boot.err}
\end{multline}
under the conditions of Theorems~\ref{thm:nonbt} and \ref{thm:boot}, and
\begin{equation}
O_p\Big(h\log n+(nh)^{-1/2}+k_1\log\ell+n^{-1/2}\ell^{1/2}+(b\ell k_1)^{-1/2}\Big)
\label{boot.err.expo}
\end{equation}
under the conditions of Theorems~\ref{thm:nonbt.expo} and \ref{thm:boot.expo}.

The main goal here is to establish conditions on $(b,\ell,k_1)$ under which
the orders (\ref{boot.err}) and (\ref{boot.err.expo}) are minimised, respectively.

Denote by $\beta_1$ the solution to the equation
\[ 3g_1(\beta_1)=(\beta_1-2)/\beta_1,\]
which yields $\beta_1\approx 2.216$.
Define, for $\beta>2$,
\[ b_{min}(\beta)=\begin{cases}
(\beta-1) (3\beta-2 - 3 \beta g_1(\beta))/(5 \beta^2 - 5 \beta-4),&2<\beta\le\beta_1,\\
2\beta(\beta-1)/(5 \beta^2 - 5 \beta-4),&\beta>\beta_1,
\end{cases}\]
and
\[ b_{max}(\beta)=(4\beta^2-4\beta-4)/(5\beta^2-5\beta-4). \]

\begin{thm}
\label{thm:boot1}
Assume that $\alpha(t)=O(t^{-\beta})$ as $t\rightarrow\infty$, for some $\beta>2$.
Suppose that $b=O(n^{b_{max}(\beta)-\delta})$ and $n^{b_{min}(\beta)+2\delta}=O(b)$, for some $\delta>0$.
Let $y\in\mathbb{R}$ be fixed. Then
\begin{eqnarray*}
\lefteqn{\prob\big(\hat{T}^*_{b,\ell,k_1}\le y\big|X_1,\ldots,X_n\big)}\\
&=&
\Phi\left(\dfrac{y-n^{1/2}h^{5/2}f''(x_0)\mu_2/2}{\sqrt{f(x_0)\nu_2}}\right)
+o_p\left(n^{-(\beta-1)(\beta-2)/(5\beta^2-5\beta-4)}\right),
\end{eqnarray*}
where $(\ell,k_1)$ satisfy the following conditions.
\begin{itemize}
\item[(i)] If $\beta\le\beta_1$, then
\begin{gather*}
\ell\propto\min\left\{b^{1+\beta/(2 -3\beta + 3\beta g_1(\beta))},n/b\right\}\\
\text{and\ \ }k_1\propto n^{\delta'}\max\left\{\ell/b,(b\ell)^{-\beta/(5\beta-4-6\beta g_1(\beta))},\ell^{-1}\right\}
\end{gather*}
for some
$\displaystyle\delta'\in\Bigg(0,\:\min\left\{
\frac{2 \beta (2 \beta - 4 + 3 \beta (\beta - 1) g_1(\beta))}{(5 \beta^2 - 5 \beta - 4) (5 \beta - 4 -
   6 \beta g_1(\beta))},\,\delta/2\right\}\Bigg)$.
\item[(ii)] If $\beta>\beta_1$, then $\ell\propto\min\left\{b^{1/2},n/b\right\}$ and $k_1\propto n^{\delta'}\ell^{-1}$ for
some $\delta'\in(0,\delta/2)$.
\end{itemize}
\end{thm}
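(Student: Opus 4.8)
The plan is to separate, within the error bound (\ref{boot.err}), the part that the bootstrap tuning parameters actually influence. The first three summands of (\ref{boot.err}) are exactly the Edgeworth error of the non-bootstrap distribution $\prob(T_h\le\cdot)$ furnished by Theorem~\ref{thm:nonbt}, and the explicit bias-correction contribution $\tau[k_2^4+k_3^4+(nk_2)^{-1/2}+(nk_3)^{-1/2}]$ has already been checked, after (\ref{bias.cond}), to be of smaller order than (\ref{min.order}); so, writing $\kappa(\beta)=(\beta-1)(\beta-2)/(5\beta^2-5\beta-4)$, the task reduces to choosing $(\ell,k_1)$, for each admissible $b$, so that the three bootstrap-specific terms
\[
E_1=k_1^{(\beta-2)/\beta},\qquad E_2=b^{-1/2}\ell^{-1/2+\delta}k_1^{-3/2+3g_1(\beta)},\qquad E_3=n^{-1/2+\delta}b^{-1/2}\ell^{1/2}k_1^{-3(1-g_2(\beta))/2}
\]
are each $o_p(n^{-\kappa(\beta)})$, subject to the hypotheses of Theorem~\ref{thm:boot}, namely $1\le\ell$, $b\ell=O(n)$, $\ell=O(bk_1)$ and $n^{-1}\ell+(\ell k_1)^{-1}+k_1=O(n^{-\epsilon})$ (the $k_2,k_3$ parts of that condition being met automatically under (\ref{bias.cond})). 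Combined with Theorem~\ref{thm:nonbt} this then delivers the stated conclusion: the block bootstrap reproduces the sampling distribution of $T_h$ to within $o_p$ of the irreducible non-bootstrap error (\ref{min.order}).

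First I would pass to the log scale, writing $b=n^{B}$, $\ell=n^{L}$, $k_1=n^{P}$, with $B$ fixed inside the window $(b_{min}(\beta),b_{max}(\beta))$ up to the $\delta$-slack. The constraints become the box $0\le L\le 1-B$, $P\ge L-B$, $P\le-\epsilon$, $L+P\ge\epsilon$, and one must minimise $\max\{e_1,e_2,e_3\}$, where $e_1=\tfrac{\beta-2}{\beta}P$, $e_2=-\tfrac B2+(-\tfrac12+\delta)L+(-\tfrac32+3g_1(\beta))P$ and $e_3=-\tfrac12+\delta-\tfrac B2+\tfrac L2+(-\tfrac32+\tfrac{3g_2(\beta)}{2})P$. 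For fixed $B$ this is a two-variable linear program whose minimiser sits at a vertex at which some of $e_1=e_2$, $e_2=e_3$, $e_1=e_3$ and some box faces are active. The dichotomy at $\beta_1$ is forced by the sign of the $P$-coefficient of $E_2$ relative to that of $E_1$, i.e. by whether $(\beta-2)/\beta$ exceeds $3g_1(\beta)$ or not — at $\beta_1$ they are equal, and by direct algebra this is also the unique point at which the two branches of $b_{min}(\beta)$ agree. For $\beta\le\beta_1$, $E_1$ is increasing and $E_2$ decreasing in $k_1$, the floor $k_1\ge\ell/b$ is slack at the optimum, and equalising $E_1$ with $E_2$ gives $k_1\propto(b\ell)^{-\beta/(5\beta-4-6\beta g_1(\beta))}$; $\ell$ is then driven up — which further shrinks $E_1=E_2$ and helps $\ell k_1\to\infty$ but inflates $E_3$ — until the floor $k_1=\ell/b$ or the cap $b\ell\asymp n$ binds, producing $\ell\propto\min\{b^{1+\beta/(2-3\beta+3\beta g_1(\beta))},n/b\}$ and $k_1$ equal to the largest of $\ell/b$, the balance value above, and the $n^{\delta'}$-inflated floor $\ell^{-1}$ coming from $\ell k_1\to\infty$. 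For $\beta>\beta_1$ the $E_2$ term no longer rewards a large $k_1$, so the optimum instead drives $k_1$ to the smallest value compatible with $\ell k_1\to\infty$, namely $k_1\propto n^{\delta'}\ell^{-1}$, and pushes $\ell$ to $\min\{b^{1/2},n/b\}$, the cap $\ell\lesssim b^{1/2}$ being exactly what preserves $k_1\ge\ell/b$.

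Next I would substitute the optimal $(\ell,k_1)$ back into $\max\{e_1,e_2,e_3\}$ and compare with $-\kappa(\beta)$. In every case the summand that first reaches $n^{-\kappa(\beta)}$ as $b$ tends to an endpoint is $E_1$ evaluated at the optimal $k_1$ (equal there to $E_2$ when $\beta\le\beta_1$): taking $b$ too small caps $\ell$ via $\ell=O(bk_1)$ and so forces $k_1$, hence $E_1$, upward, while taking $b$ too large caps $\ell$ via $b\ell=O(n)$ with the same effect; solving $E_1\asymp n^{-\kappa(\beta)}$ at the two caps returns $b_{min}(\beta)$ — in its two branches, matching the $(\ell,k_1)$ dichotomy — and $b_{max}(\beta)=(4\beta^2-4\beta-4)/(5\beta^2-5\beta-4)$, the latter uniformly in $\beta$ because the active cap there, $\ell\asymp n/b$, forces $k_1\asymp b/n$ in both regimes. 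Finally one checks the slack: the $n^{\delta}$ in $E_2,E_3$ and the $n^{\delta'}$-inflation of $k_1$ each cost an extra $n^{O(\delta+\delta')}$, and the hypotheses $b=O(n^{b_{max}(\beta)-\delta})$, $n^{b_{min}(\beta)+2\delta}=O(b)$ together with $\delta'<\delta/2$ and the displayed upper bound on $\delta'$ are tuned precisely so that this extra factor is absorbed — every bootstrap term staying $o_p(n^{-\kappa(\beta)})$ — while $k_1=O(n^{-\epsilon})$ and $\ell k_1\to\infty$ still hold; the stated interval for $\delta'$ is the intersection of these two requirements.

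The main obstacle is the bookkeeping in the second step: one must track, as $B$ ranges over $(b_{min}(\beta),b_{max}(\beta))$ and as $\beta$ varies, which of the box faces $P\ge L-B$, $P\le-\epsilon$, $L+P\ge\epsilon$, $L\le 1-B$ is active at the linear-program vertex — these switches are exactly what produce the $\min\{\cdot,\cdot\}$ in $\ell$ and the $\max\{\cdot,\cdot,\cdot\}$ in $k_1$ — and then confirm that the resulting optimal exponent is continuous in $(B,\beta)$ and strictly less than $-\kappa(\beta)$ throughout the open window, failing only at its endpoints. A secondary point is to verify that the balance value for $k_1$ in the case $\beta\le\beta_1$ never violates the hypothesis $\ell=O(bk_1)$ needed for the expansion of Theorem~\ref{thm:boot}; should it fail on some sub-range of $B$, one restricts there to the face $k_1\asymp\ell/b$ and checks that $b_{min}(\beta)$ and $b_{max}(\beta)$ are unchanged.
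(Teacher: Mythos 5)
Your proposal is correct and takes essentially the same route as the paper: both reduce via Theorem~\ref{thm:boot} under (\ref{bias.cond}) to showing that $\Xi_n=k_1^{(\beta-2)/\beta}+b^{-1/2}\ell^{-1/2+\delta}k_1^{-3/2+3g_1(\beta)}+n^{-1/2+\delta}b^{-1/2}\ell^{1/2}k_1^{-3(1-g_2(\beta))/2}$ is $o_p\big(n^{-(\beta-1)(\beta-2)/(5\beta^2-5\beta-4)}\big)$, and both then verify this sub-case by sub-case in $b$ under the prescribed $(\ell,k_1)$, with $b_{min}(\beta)$ and $b_{max}(\beta)$ arising exactly as you describe from the first (balanced) term hitting the target rate at the two endpoints. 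Your linear-program-in-exponents framing merely supplies the derivation of the tuning that the paper states without motivation; the verification step you outline is the paper's actual proof.
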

We see, by noting (\ref{min.order}), that the error rate
(\ref{boot.err}) reduces to
\begin{multline}
O_p\Big(n^{-(\beta-1)/(4\beta+2)}h^{-(\beta+1)/(4\beta+2)}\\+h^{(\beta-2)/\beta}
+n^{-1/2+\delta}h^{-(3\beta+3)/(2\beta-2)+g_0(\beta)}\Big)
\label{eq:opterr}
\end{multline}
under the conditions on $(b,\ell,k_1)$ given in Theorem~\ref{thm:boot1}. Note by
Theorem~\ref{thm:nonbt} that if we set the mean and variance to be the ``true'' values of the asymptotic mean and variance
of $T_h$ in its normal approximation, the resulting error rate has an order the same as (\ref{eq:opterr}).

\begin{thm}
\label{thm:boot1.expo}
Assume that $\alpha(t)=O(e^{-Ct})$ as $t\rightarrow\infty$, for some $C>0$.
Let $y\in\mathbb{R}$ be fixed and $\{L_n\}$ be any
positive, slowly varying, sequence converging to 0. Then
\begin{align*}
\prob\big(\hat{T}^*_{b,\ell,k_1}\le y\big|X_1,\ldots,X_n\big)&=\Phi\left(\dfrac{y-n^{1/2}h^{5/2}f''(x_0)\mu_2/2}{\sqrt{f(x_0)\nu_2}}\right) \\
& \quad\quad  + O_p\big\{n^{-1/3}(\log n)^{1/3}L_n^{-1}\big\},
\end{align*}
where $\ell\propto n^{1/3}(\log n)^{2/3}L_n^{1/2}$, $b\propto n/\ell$  and
$k_1\propto\big(\ell L_n^{1/2}\big)^{-1}$.
\end{thm}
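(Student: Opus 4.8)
The plan is to read Theorem~\ref{thm:boot1.expo} off the bootstrap Edgeworth-type expansion of Theorem~\ref{thm:boot.expo}, specialised to the EBC tuning \eqref{bias.cond}, by choosing $(b,\ell,k_1)$ so as to minimise the surviving bootstrap error terms while keeping the hypotheses of Theorem~\ref{thm:boot.expo} satisfied. Throughout we work in the bandwidth regime $h\propto n^{-1/3}(\log n)^{-2/3}$ for which the non-bootstrap normal approximation of Theorem~\ref{thm:nonbt.expo} attains its minimum order $n^{-1/3}(\log n)^{1/3}$; this is the regime in which the asserted rate $O_p\big(n^{-1/3}(\log n)^{1/3}L_n^{-1}\big)$ is meaningful.

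First I would substitute into Theorem~\ref{thm:boot.expo} the EBC parameters $k_2\propto n^{-1/9}$, $k_3=c_0k_2$ and $\tau=(1-c_0^2)^{-1}n^{1/2}h^{5/2}k_2^{-2}$ of \eqref{bias.cond}. The identity $\tau(k_2^2-k_3^2)=n^{1/2}h^{5/2}$ turns the Gaussian term there into exactly $\Phi\big((y-n^{1/2}h^{5/2}f''(x_0)\mu_2/2)/\sqrt{f(x_0)\nu_2}\big)$, the normal law appearing in the statement, while the bias-correction remainder evaluates to $\tau\big[k_2^4+k_3^4+(nk_2)^{-1/2}+(nk_3)^{-1/2}\big]=O\big(n^{5/18}h^{5/2}\big)=O\big(n^{-5/9}(\log n)^{-5/3}\big)$, which is $o\big(n^{-1/3}(\log n)^{1/3}L_n^{-1}\big)$ and hence negligible. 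It then remains to choose $(b,\ell,k_1)$ so that $k_1\log\ell+n^{-1/2}\ell^{1/2}+(b\ell k_1)^{-1/2}$ is $O_p\big(n^{-1/3}(\log n)^{1/3}L_n^{-1}\big)$ while respecting $b\ell=O(n)$ and $n^{-1}\ell+(\ell k_1)^{-1}+k_1=o(1)$ (the conditions on $k_2,k_3$ being already in place).

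The second step is the optimisation. Taking $b\ell\propto n$, i.e.\ $b\propto n/\ell$, is never disadvantageous, so one is left to minimise $k_1\log\ell+n^{-1/2}\ell^{1/2}+(nk_1)^{-1/2}$ over $(\ell,k_1)$. For fixed $\ell$, balancing $k_1\log\ell$ against $(nk_1)^{-1/2}$ forces $k_1\propto n^{-1/3}(\log\ell)^{-2/3}$ with common order $n^{-1/3}(\log\ell)^{1/3}$, and then $n^{-1/2}\ell^{1/2}$ is of no larger order precisely when $\ell=O\big(n^{1/3}(\log n)^{2/3}\big)$. \emph{This balancing is the crux}, and the only real obstacle: the naive optimum $\ell\propto n^{1/3}(\log n)^{2/3}$, $k_1\propto n^{-1/3}(\log n)^{-2/3}$ produces $\ell k_1\propto 1$, contradicting the requirement $(\ell k_1)^{-1}=o(1)$ built into Theorem~\ref{thm:boot.expo}. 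The remedy is to dilate the scales by an arbitrarily slowly diverging amount, which is exactly what the slowly varying sequence $L_n$ supplies: with $\ell\propto n^{1/3}(\log n)^{2/3}L_n^{1/2}$, $b\propto n/\ell$ and $k_1\propto(\ell L_n^{1/2})^{-1}=n^{-1/3}(\log n)^{-2/3}L_n^{-1}$ one gets $\ell k_1\propto L_n^{-1/2}\to\infty$, paying only the extra factor $L_n^{-1}$ in the dominant error term.

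Finally I would verify the rates term by term under these choices. Since $\log\ell=\tfrac13\log n+O\big(\log\log n+|\log L_n|\big)\sim\tfrac13\log n$, one obtains $k_1\log\ell\propto n^{-1/3}(\log n)^{1/3}L_n^{-1}$ (the dominant term), while $n^{-1/2}\ell^{1/2}\propto n^{-1/3}(\log n)^{1/3}L_n^{1/4}$ and $(b\ell k_1)^{-1/2}\propto n^{-1/3}(\log n)^{1/3}L_n^{1/2}$ are both $o\big(n^{-1/3}(\log n)^{1/3}L_n^{-1}\big)$ since $L_n\to0$, and the EBC remainder was already shown negligible. The hypotheses of Theorem~\ref{thm:boot.expo} hold: $b\ell\propto n=O(n)$, $n^{-1}\ell\propto n^{-2/3}(\log n)^{2/3}L_n^{1/2}\to0$, $(\ell k_1)^{-1}\propto L_n^{1/2}\to0$, $k_1\to0$, and $k_2,k_3\propto n^{-1/9}\to0$ with $(nk_2)^{-1},(nk_3)^{-1}\propto n^{-8/9}\to0$. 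Substituting all of this into the expansion of Theorem~\ref{thm:boot.expo} yields the stated conclusion. I expect the bookkeeping in this last step to be routine; the substantive point is the balancing in the preceding paragraph, which is what makes the slowly varying correction $L_n$ unavoidable rather than a mere artefact.
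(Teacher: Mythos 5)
Your proposal is correct and follows essentially the same route as the paper's (very terse) proof: substitute the EBC settings \eqref{bias.cond} into Theorem~\ref{thm:boot.expo}, observe that $\tau(k_2^2-k_3^2)=n^{1/2}h^{5/2}$ recovers the stated normal law, and check that under the prescribed $(b,\ell,k_1)$ each of $k_1\log\ell$, $n^{-1/2}\ell^{1/2}$ and $(b\ell k_1)^{-1/2}$ is $O\big(n^{-1/3}(\log n)^{1/3}L_n^{-1}\big)$; your term-by-term rates and your explanation of why the slowly varying dilation $L_n$ is forced by the constraint $(\ell k_1)^{-1}=o(1)$ are all right, and the latter is a genuinely useful observation the paper leaves implicit.

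One caveat: your opening restriction to $h\propto n^{-1/3}(\log n)^{-2/3}$ is not part of the theorem's hypotheses, so as written your argument proves the statement only in that regime. The restriction is irrelevant for the three bootstrap terms, which do not involve $h$ at all; it enters only when you absorb the bias-correction remainder $\tau\big[k_2^4+k_3^4+(nk_2)^{-1/2}+(nk_3)^{-1/2}\big]=O\big(n^{5/18}h^{5/2}\big)$ into the stated error. That absorption genuinely requires $h$ to be small enough (roughly $n^{5/18}h^{5/2}=O\big(n^{-1/3}(\log n)^{1/3}L_n^{-1}\big)$, i.e.\ $h=O\big(n^{-11/45}\big)$ up to logarithms); for $h$ near the MSE-optimal order $n^{-1/5}$ this term is of order $n^{-2/9}$, which exceeds $n^{-1/3}(\log n)^{1/3}L_n^{-1}$. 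The paper's own proof simply omits this term, so your treatment is in fact the more careful one; you should either state the needed bound on $h$ explicitly as a hypothesis, or record the remainder $O\big(n^{5/18}h^{5/2}\big)$ alongside the claimed rate, rather than fixing a single order for $h$.
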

Under the choices of $(b,\ell,k_1)$ given in Theorem~\ref{thm:boot1.expo}, the error rate
(\ref{boot.err.expo}) reduces to
\begin{equation}
\begin{cases}
O_p\big\{h\log n+(nh)^{-1/2}\big\},&h=O\big\{n^{-1/3}(\log n)^{-2/3}L_n^2\big\}\\
&
\text{\ \ or\ }n^{-1/3}(\log n)^{-2/3}L_n^{-1}=O(h),\\[0.5ex]
O_p\big\{n^{-1/3}(\log n)^{1/3}L_n^{-1}\big\},& \text{otherwise.}
\end{cases}
\label{eq:opterr.expo}
\end{equation}
Note by
Theorem~\ref{thm:nonbt.expo} that (\ref{eq:opterr.expo}) is equivalent to
the error
rate of normal approximation based on the ``true'' asymptotic mean and variance of $T_h$, except
when $h$ has an order within a slowly-varying factor from $n^{-1/3}(\log n)^{-2/3}$,
in which case (\ref{eq:opterr.expo}) exceeds the normal approximation error rate
by a slowly-varying factor.

\begin{remark}
In view of Theorem~\ref{thm:boot1} we may in practice set
\begin{equation}
b\propto n^{b_0},\;\;\ell\propto n^{\min\{b_0/2,\,1-b_0\}},\;\;k_1\propto n^{-\min\{b_0/2,\,1-b_0\}+\delta/2},
\label{prac}
\end{equation}
for some $b_0\in\big(b_{min}(\beta_1)+2\delta,b_{max}(\beta_1)-\delta\big)\approx\big(0.5689+2\delta, 0.7156-\delta\big)$
and any $\delta\in\big(0,\{b_{max}(\beta_1)-b_{min}(\beta_1)\}/3\big)\approx(0,0.04888)$. The setting (\ref{prac}) does not require knowledge of
$\beta$, satisfies conditions (ii) in Theorem~\ref{thm:boot1}, and minimises the error rate expressed in (\ref{boot.err}) for
$\beta>\beta_1$.

Note that a small choice of $b_0$, e.g.\ $0.5689\lessapprox b_0<2/3$, reduces computational cost, for which each bootstrap series has length
$b\ell\propto n^{3b_0/2}=o(n)$. The choice $b_0\ge 2/3$ amounts to the standard MBB which sets $b=\lfloor n/\ell\rfloor$.
\end{remark}

\begin{remark}
 Under the conditions of Theorem~\ref{thm:boot1.expo}, the practical choice (\ref{prac}), with $b_0$ set to $2/3$, minimises the error rate (\ref{boot.err.expo})
 whenever 
\[h=O\big(n^{-1/3-\delta}(\log n)^{-2}\big)
\text{\ \ or\ \ }n^{-1/3+\delta/2}\log n=O(h);\]
otherwise it yields an error rate of order $O_p\big(n^{-1/3+\delta/2}\log n\big)$.
\end{remark}

\begin{remark}
Under the conditions of Theorem~\ref{thm:boot1.expo}, the subsampling approach, which sets $b=1$, also provides a consistent distribution
estimator. Minimising (\ref{boot.err.expo}) under this setting returns the optimal choices  $\ell\propto n^{3/5}(\log n)^{2/5}$ and $k_1\propto n^{-1/5}(\log n)^{-4/5}$, leading
to an error rate of order $O_p\big(h\log n+(nh)^{-1/2}+n^{-1/5}(\log n)^{1/5}\big)$, which is inferior to that given by (\ref{prac}) with
$b_0=2/3$.
\end{remark}

\begin{remark}
Denote by $n^{q(\beta)}$ the order of the error term contributed by the block bootstrap to (\ref{boot.err}),
that is
\[
k_1^{(\beta-2)/\beta}+b^{-1/2}\ell^{-1/2+\delta}k_1^{-3/2+3g_1(\beta)}+n^{-1/2
+\delta}b^{-1/2}\ell^{1/2}k_1^{-3(1-g_2(\beta))/2}.
\]
The following diagrams plot $q(\beta)$, up to an arbitrarily small constant, against $\beta$ on the log scale under the practical choices (\ref{prac}),
when $b_0$ is set to
$2/3$ (blue) and $0.569$ (red), respectively. The exponent in (\ref{min.order}), that is
$-(\beta-1)(\beta-2)/(5\beta^2-5\beta-4)$, is shown for comparison (brown). The left and right diagrams correspond
respectively to the cases where $\beta\le\beta_1$ and $\beta>\beta_1$.
\end{remark}

\begin{center}
\includegraphics[scale=0.35]{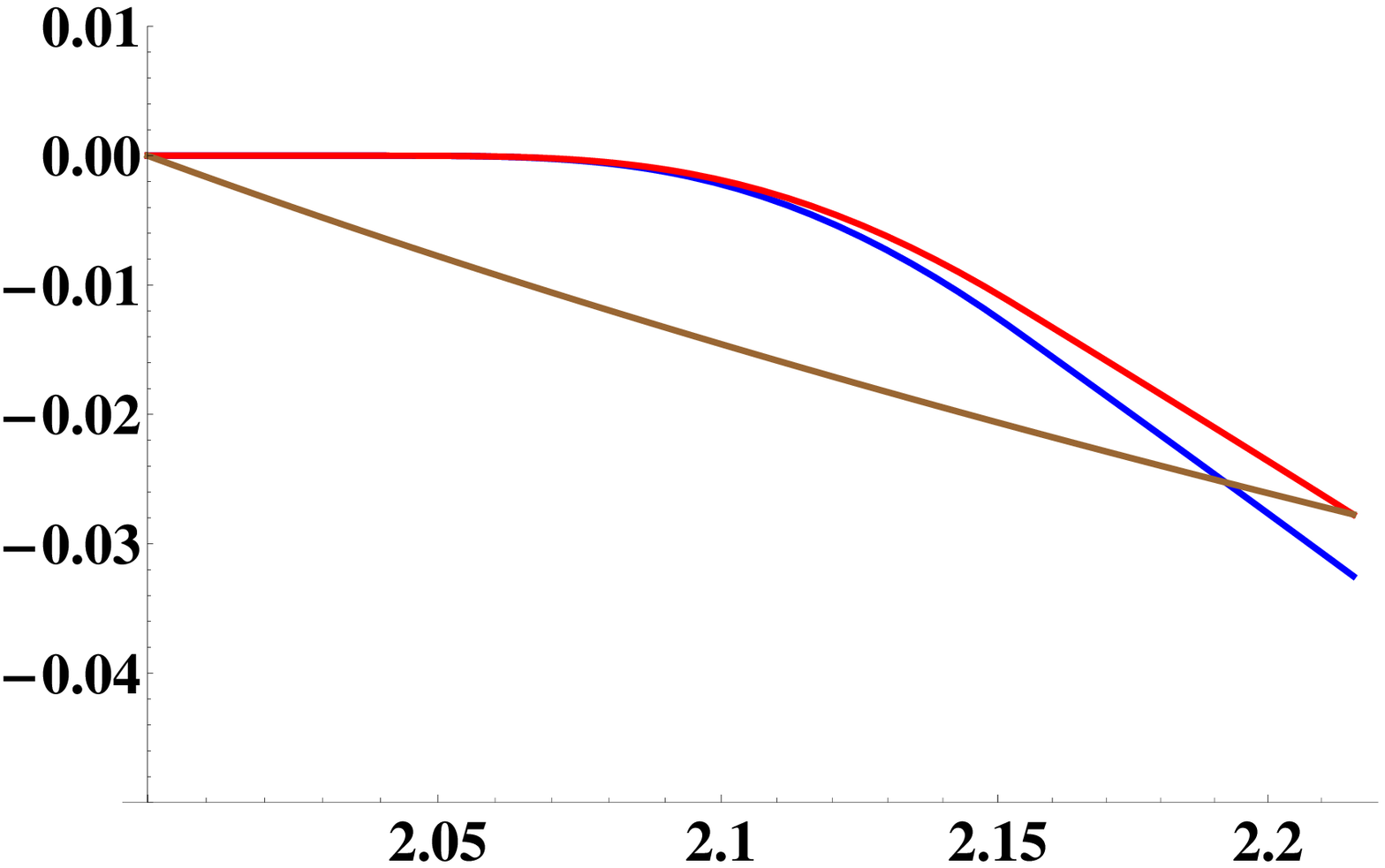} \includegraphics[scale=0.35]{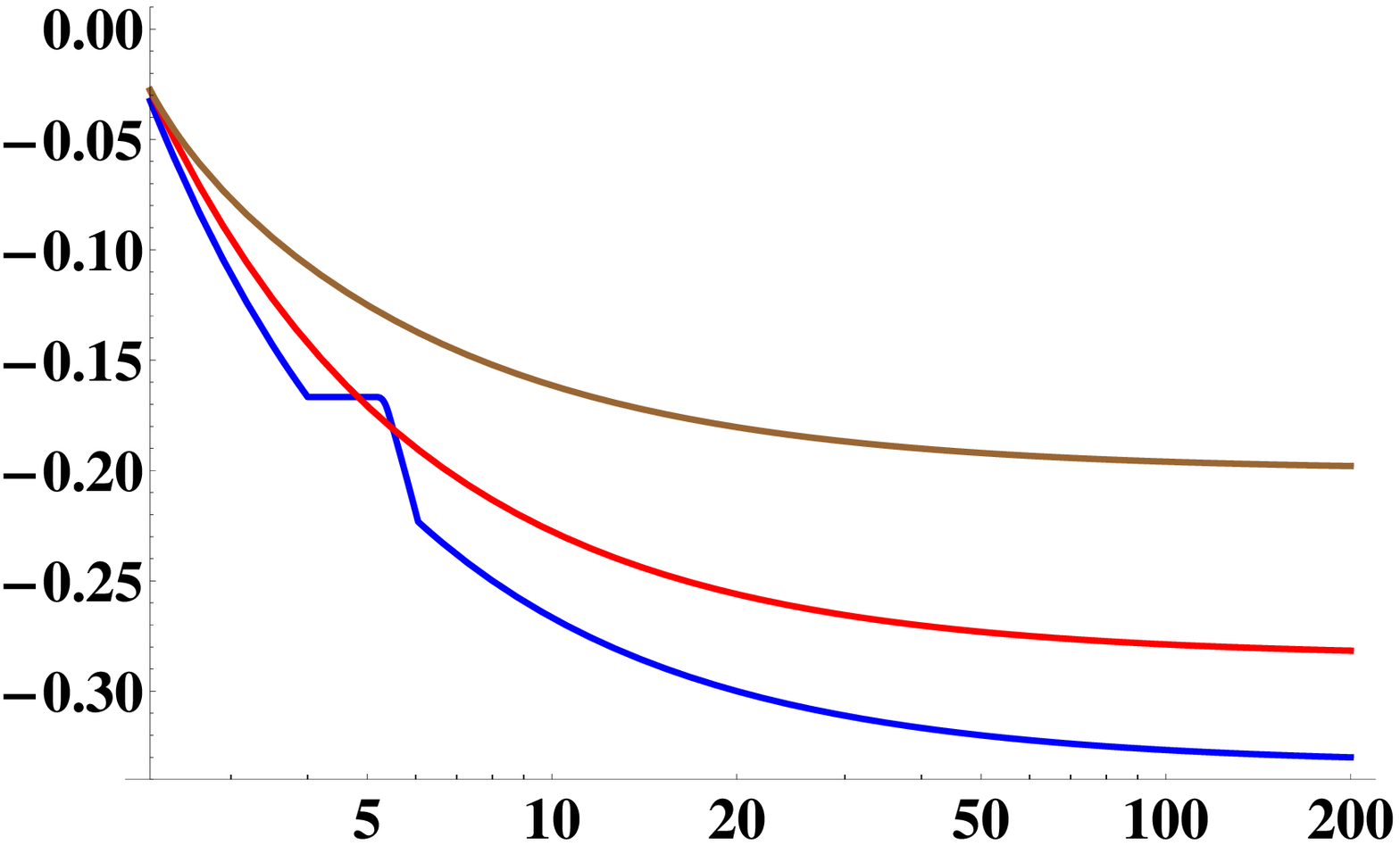}
\end{center}

\subsection{Choice of $(b,\ell)$ for NBC}
We see from Theorems~\ref{thm:nonbt}--\ref{thm:boot.expo} that
\[\prob\big(\tilde{T}^*_{b,\ell}\le \cdot\,\big|X_1,\ldots,X_n\big)-\prob(T_h\le \cdot\,)\]
has an order
\begin{multline}
O_p\Big\{n^{-(\beta-1)/(4\beta+2)}h^{-(\beta+1)/(4\beta+2)}+h^{(\beta-2)/\beta}\\+
n^{-1/2+\delta}h^{-(3\beta+3)/(2\beta-2)+g_0(\beta)}
+n^{-1/2}(b\ell)^{1/2}\\+\big(nh^5/(b\ell)\big)^{(\beta-2)/(5\beta)}
+n^\delta\big(nh^5)^{(6g_1(\beta)-3)/10}(b\ell)^{-(1+3 g_1(\beta))/5}\Big\}
\label{boot.err.nocorr}
\end{multline}
under the conditions of Theorems~\ref{thm:nonbt} and \ref{thm:boot}, and
\begin{multline}
O_p\Big\{h\log n+(nh)^{-1/2}+
n^{1/5}h(b\ell)^{-1/5}\log\ell\\+n^{-1/10}h^{-1/2}(b\ell)^{-2/5}+n^{-1/2}(b\ell)^{1/2}\Big\}
\label{boot.err.expo.nocorr}
\end{multline}
under the conditions of Theorems~\ref{thm:nonbt.expo} and \ref{thm:boot.expo}.

The following theorem establishes optimal choices of $(b,\ell)$ which minimise
(\ref{boot.err.nocorr}). 

\begin{thm}
\label{thm:boot2}
Assume the conditions of Theorem~\ref{thm:nonbt} and that 
\[b\ell=O(n),\quad b^{-4}\ell^{6}=O(nh^5)\quad\text{and}\quad n^{-1}\ell+ n^{-1}h^{-5}b\ell^{-4}=O(n^{-\epsilon}),\]
for some arbitrarily small $\epsilon>0$.
Let $y\in\mathbb{R}$ be fixed. Then,  for any arbitrarily small  $\delta>0$,
\begin{multline*}
\prob\big(\tilde{T}^*_{b,\ell}\le y\big|X_1,\ldots,X_n\big)-\Phi\left(\dfrac{y-n^{1/2}h^{5/2}f''(x_0)\mu_2/2}{\sqrt{f(x_0)\nu_2}}\right)\\
=O_p\Big\{n^{-1/2}(b\ell)^{1/2}+\big(nh^5/(b\ell)\big)^{(\beta-2)/(5\beta)}\\
+n^\delta\big(nh^5)^{(6g_1(\beta)-3)/10}(b\ell)^{-(1+3 g_1(\beta))/5}\Big\}
\end{multline*}
has a minimum order
\[ O_p\Big\{h^{(5\beta-10)/(7\beta-4)}+
\big(n^{-1 + 2\delta}h^{6g_1(\beta)-3}\big)^{5/(12 g_1(\beta)+14)}
+n^{-5/4+\epsilon/2}h^{-15/4}
\Big\},\]
attained by setting
\begin{itemize}
\item[(i)] 
$\ell\propto (nh^5)^{-1/2}n^{2\epsilon/5}$ and $b\propto n^{-1 +3\epsilon/5}h^{-5}$ if
\[h=O\Big(\min\big\{n^{-\frac{(7\beta-4)(5-2\epsilon)}{125\beta-100}},
n^{ -\frac{30 g_1(\beta)+25+20\delta-\epsilon(12 g_1(\beta)+14)}{150g_1(\beta)+75}}\big\}\Big);
\]

\item[(ii)]  
 $\ell=O\big(
n^{1/2}h^{(15\beta-20)/(14\beta-8)}\big)$, $b\ell\propto nh^{(10\beta-20)/(7\beta-4)}$
and $n^{\epsilon/5}h^{-5\beta/(7\beta-4)}=O(\ell)$
 if 
\[ h^{-1}=o\Big(\min\big\{n^{\frac{(7\beta-4)(5-2\epsilon)}{125\beta-100}}, n^{\frac{(7 \beta-4)(1-2\delta)}{35\beta -40 -  30\beta g_1(\beta)}}\big\}
\Big);\]

\item[(iii)] 
$\ell=O\Big\{\big(n^{6g_1(\beta)+3 + 8\delta}h^{30 g_1(\beta)-5}\big)^{1/(12 g_1(\beta)+14)}\Big\}$, 
$n^{\epsilon/5}\big(n^{1-2\delta}h^{10}\big)^{-1/(6 g_1(\beta)+7)}=O(\ell)$ and
$b\ell\propto \big(n^{(6 g_1(\beta)+2+10\delta)}h^{30 g_1(\beta)-15}\big)^{1/(6g_1(\beta)+7)}$ otherwise.

\end{itemize}
\end{thm}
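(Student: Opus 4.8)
The plan is to establish the result in two stages: first, specialising the general expansion of Theorem~\ref{thm:boot} to the NBC tuning parameters to obtain the displayed three-term $O_p$-remainder; and second, a purely deterministic minimisation of that remainder over $(b,\ell)$.

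For the first stage I would substitute $k_1=k_2=k$, $k_3=c_0k$ and $\tau=(b\ell k)^{1/2}$, with $k$ as in (\ref{bias.cond.nocorr}), into Theorem~\ref{thm:boot} and check that its hypotheses hold. Under this substitution they translate precisely into the assumptions made here: $b\ell=O(n)$ is assumed, $\ell=O(bk_1)$ becomes $b^{-4}\ell^{6}=O(nh^{5})$, and $(\ell k_1)^{-1}=O(n^{-\epsilon'})$ becomes $n^{-1}h^{-5}b\ell^{-4}=O(n^{-\epsilon'})$; while $k_1,k_2,k_3=O(n^{-\epsilon'})$ for some $\epsilon'>0$ follows from $b\ell=O(n)$, $h=O(n^{-1/5})$ and the lower bound $b\ell\ge c\,n^{\epsilon-3/2}h^{-15/2}$ that the other two conditions imply (which gives $k=O(n^{-\epsilon/5})$). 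Because (\ref{bias.cond.nocorr}) is chosen so that $\tau(k_2^{2}-k_3^{2})=(1-c_0^{2})(b\ell)^{1/2}k^{5/2}=(nh^{5})^{1/2}$, the Gaussian term returned by Theorem~\ref{thm:boot} is exactly $\Phi\big((y-n^{1/2}h^{5/2}f''(x_0)\mu_2/2)/\sqrt{f(x_0)\nu_2}\big)$, so subtracting it leaves the $O_p$-remainder of Theorem~\ref{thm:boot}. In that remainder, $k_1^{(\beta-2)/\beta}=(nh^{5}/(b\ell))^{(\beta-2)/(5\beta)}$ and $\tau(nk_2)^{-1/2}=n^{-1/2}(b\ell)^{1/2}$ reproduce two of the displayed terms, while $b^{-1/2}\ell^{-1/2+\delta}k_1^{-3/2+3g_1(\beta)}=O\big(n^{\delta}(nh^{5})^{(6g_1(\beta)-3)/10}(b\ell)^{-(1+3g_1(\beta))/5}\big)$ (using $\ell\le n$) reproduces the third; what remains is to absorb $\tau(k_2^{4}+k_3^{4})$ and $n^{-1/2+\delta}b^{-1/2}\ell^{1/2}k_1^{-3(1-g_2(\beta))/2}$ into these. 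The first is $O\big((nh^{5}/(b\ell))^{(\beta-2)/(5\beta)}\big)$ because the ratio of the two is, after substituting $k$ and using $b\ell\ge1$, bounded by a positive power of $n^{1/5}h=O(1)$; the second I would bound by using $\ell=O(bk_1)$ to replace $b^{-1/2}\ell^{1/2}$ by $O(k_1^{1/2})$, then checking at the binding value $b\ell\asymp n^{\epsilon-3/2}h^{-15/2}$ that the result is dominated by the first and third displayed terms. This gives the displayed expansion.

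For the second stage I would note that all three retained terms depend on $(b,\ell)$ only through $m\triangleq b\ell$, and write them as $E_A(m)=n^{-1/2}m^{1/2}$, $E_B(m)=(nh^{5}/m)^{(\beta-2)/(5\beta)}$ and $E_C(m)=n^{\delta}(nh^{5})^{(6g_1(\beta)-3)/10}m^{-(1+3g_1(\beta))/5}$, so that $E_A$ is increasing and $E_B,E_C$ decreasing in $m$, all three being powers of $m$, with $E_B/E_C$ increasing. For a given $m$, a pair $(b,\ell)$ with $b\ell\asymp m$ satisfying all the side conditions exists iff $\ell$ can be chosen with $c\,(m\,n^{\epsilon-1}h^{-5})^{1/5}\le\ell\le C\,(m^{4}nh^{5})^{1/10}$ (and $\ell\le\min\{m,n^{1-\epsilon}\}$), i.e.\ iff $m$ lies in $[\underline m,\overline m]$ with $\underline m\asymp n^{\epsilon-3/2}h^{-15/2}$ and $\overline m\asymp n$. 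Since $E_A+E_B+E_C\asymp E_A(m)+\max\{E_B(m),E_C(m)\}$, a sum of one increasing and one decreasing power, this is minimised, to within constants, at $m^{*}=\min\{\overline m,\max\{\underline m,m_{AB},m_{AC}\}\}$, where $m_{AB}$ and $m_{AC}$ solve $E_A=E_B$ and $E_A=E_C$; and because $E_B/E_C$ is monotone, whichever of $\underline m,m_{AB},m_{AC}$ is the largest, the other two terms are no larger at $m^{*}$, so the minimum order equals $E_A(m^{*})=\max\{E_A(m_{AB}),E_A(m_{AC}),E_A(\underline m)\}$. A direct computation gives $E_A(m_{AB})=h^{(5\beta-10)/(7\beta-4)}$, $E_A(m_{AC})=(n^{-1+2\delta}h^{6g_1(\beta)-3})^{5/(12g_1(\beta)+14)}$ and $E_A(\underline m)=n^{-5/4+\epsilon/2}h^{-15/4}$, which are the three terms in the claimed minimum order; cases (ii), (iii), (i) are exactly the regimes in which $m_{AB}$, $m_{AC}$, $\underline m$ respectively is the largest of the three, and rewriting the defining inequalities (e.g.\ $\underline m\ge m_{AB}$, or $m_{AB}\ge m_{AC}$) as conditions on $h$ yields the stated cutoffs. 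In each regime one reads off $b\ell\asymp m^{*}$ and takes $\ell$ anywhere in its window, which reproduces the displayed ranges; in case (i) the window has collapsed to the single order $\ell\asymp(nh^{5})^{-1/2}n^{2\epsilon/5}$, forcing $b\asymp n^{-1+3\epsilon/5}h^{-5}$.

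I expect the main obstacle to be the bookkeeping in the first stage: one must confirm that \emph{every} contribution in the remainder of Theorem~\ref{thm:boot} under the NBC substitution is dominated by the three retained terms using only the three side conditions and $h=O(n^{-1/5})$, the delicate case being the $g_2(\beta)$-term, which depends on $b$ and $\ell$ separately rather than only through $b\ell$ and so must be controlled at the edge of the feasible region. The optimisation in the second stage is then routine power-counting; the only point requiring care there is to confirm that the $\ell$-window is non-empty and compatible with the integer constraints $1\le\ell$ and $1\le b\le\lfloor n/\ell\rfloor$ at $m=m^{*}$ in each regime, for which the hypothesis $nh^{3}\to\infty$ of Theorem~\ref{thm:nonbt} — which prevents $h$ from being too small — is used.
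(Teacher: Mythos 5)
Your proposal is correct and follows essentially the same route as the paper: translate the hypotheses into a feasible window for $\ell$ and hence for $m=b\ell$ (with $\underline m\asymp n^{\epsilon-3/2}h^{-15/2}$ and $\overline m\asymp n$), observe that the three retained terms are monotone powers of $m$ alone, and locate the minimiser at whichever of $\underline m$, $m_{AB}$, $m_{AC}$ binds, which reproduces the three rates and the case split in $h$. Your first stage merely makes explicit the reduction of Theorem~\ref{thm:boot} to the three-term bound (absorbing the $\tau(k_2^4+k_3^4)$ and $g_2(\beta)$ contributions), which the paper asserts in deriving (\ref{boot.err.nocorr}) rather than proving inside Theorem~\ref{thm:boot2}.
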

Under the conditions on $(b,\ell)$ given in Theorem~\ref{thm:boot2}(i), (ii) and (iii), the error rate
(\ref{boot.err.nocorr}) reduces to
\begin{multline}
O_p\Big\{n^{-(\beta-1)/(4\beta+2)}h^{-(\beta+1)/(4\beta+2)}+h^{(5\beta-10)/(7\beta-4)}\\
+\,
\big(n^{-1 + 2\delta}h^{6g_1(\beta)-3}\big)^{5/(12 g_1(\beta)+14)}
+n^{-5/4+\epsilon/2}h^{-15/4}\Big\},
\label{eq:opterr2}
\end{multline}
which is in general larger than (\ref{eq:opterr}). Thus, NBC
is in general inferior to EBC in terms of
their minimum error rates when $\alpha(t)$ decays at a polynomial rate with $\beta>2$.

\begin{thm}
\label{thm:boot2.expo}
Assume the conditions of Theorem~\ref{thm:nonbt.expo}.
Suppose that
\[b\ell=O(n)\quad\text{and}\quad n^{-1}\ell+ n^{-1}h^{-5}b\ell^{-4}=o(1).\]
Let $y\in\mathbb{R}$ be fixed. Then
\begin{multline*}
\prob\big(\tilde{T}^*_{b,\ell}\le y\big|X_1,\ldots,X_n\big)-
\Phi\left(\dfrac{y-n^{1/2}h^{5/2}f''(x_0)\mu_2/2}{\sqrt{f(x_0)\nu_2}}\right)\\
=O_p\Big\{n^{1/5}h(b\ell)^{-1/5}\log\ell+n^{-1/10}h^{-1/2}(b\ell)^{-2/5}+n^{-1/2}(b\ell)^{1/2}\Big\}
\end{multline*}
has a minimum order $O_p\big\{(nh)^{-5/18}+(h\log n)^{5/7}\big\}$,
attained by setting
\begin{equation}
\begin{cases}
b\ell\propto n^{4/9}h^{-5/9},\;
nh^{10}\ell^9\rightarrow\infty,
\text{\ if\ }h^{25}=O\big\{n^{-7}(\log n)^{-18}\big\},\\
b\ell\propto n(h\log n)^{10/7},
\;h^{5}(\log n)^{-2}\ell^7\rightarrow\infty,
\text{\ if\ }n^{7}(\log n)^{18}h^{25}\rightarrow\infty.
\end{cases}
\label{eq:opt.expo.nocorr}
\end{equation}
\end{thm}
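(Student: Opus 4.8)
The plan is to obtain the displayed error rate from Theorem~\ref{thm:boot.expo} by specialising its general expansion to the NBC configuration \eqref{bias.cond.nocorr}, and then to minimise that rate over all admissible $(b,\ell)$.

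First I would substitute $k_1=k_2=k$, $k_3=c_0k$, $\tau=(b\ell k)^{1/2}$ and $k=(1-c_0^2)^{-2/5}n^{1/5}(b\ell)^{-1/5}h$ into Theorem~\ref{thm:boot.expo}. A direct computation gives $\tau(k_2^2-k_3^2)=n^{1/2}h^{5/2}$, so the Gaussian leading term is exactly $\Phi\big((y-n^{1/2}h^{5/2}f''(x_0)\mu_2/2)/\sqrt{f(x_0)\nu_2}\big)$. In the remainder of Theorem~\ref{thm:boot.expo}, $k_1\log\ell$ is the first displayed term; $(b\ell k_1)^{-1/2}\asymp\{(b\ell)^{4/5}n^{1/5}h\}^{-1/2}=(b\ell)^{-2/5}n^{-1/10}h^{-1/2}$ is the second; $n^{-1/2}\ell^{1/2}$ and $\tau(nk_2)^{-1/2}\asymp\tau(nk_3)^{-1/2}\asymp(b\ell)^{1/2}n^{-1/2}$ fold into the third since $b\ge1$; and $\tau(k_2^4+k_3^4)\asymp(b\ell)^{-2/5}n^{9/10}h^{9/2}=(b\ell)^{-2/5}n^{-1/10}h^{-1/2}\cdot nh^5$ folds into the second since $nh^5=O(1)$ under the inherited condition $h=O(n^{-1/5})$. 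I would then verify the hypotheses of Theorem~\ref{thm:boot.expo}: $b\ell=O(n)$ is assumed; the requirement $(\ell k_1)^{-1}\to0$ is precisely the hypothesis $n^{-1}h^{-5}b\ell^{-4}=o(1)$; this hypothesis together with $h=O(n^{-1/5})$ forces $\ell\to\infty$, whence $k=O(\ell^{-1/5})\to0$ and in particular $k_1,k_2,k_3\to0$; and $(nk_2)^{-1},(nk_3)^{-1}=O\big((nh)^{-1}\big)\to0$ since $b\ell=O(n)$ and $nh\to\infty$. This yields the displayed error rate, which is the block-bootstrap portion of \eqref{boot.err.expo.nocorr}.

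For the minimisation, put $m=b\ell$; the error rate is $n^{1/5}h\,m^{-1/5}\log\ell+n^{-1/10}h^{-1/2}m^{-2/5}+n^{-1/2}m^{1/2}$, which depends on $(b,\ell)$ only through $m$ and the factor $\log\ell$. The hypothesis $n^{-1}h^{-5}b\ell^{-4}=o(1)$ reads $\ell^5\gg n^{-1}h^{-5}m$, and since $h=O(n^{-1/5})$ makes $n^{-1}h^{-5}$ bounded below, every admissible $\ell$ in the two regimes below is at least a positive power of $n$, so $\log\ell\asymp\log n$ and the task reduces (up to constants) to minimising $\varphi(m)=Am^{-1/5}+Bm^{-2/5}+Cm^{1/2}$ over $m\le Cn$, with $A\asymp n^{1/5}h\log n$, $B=n^{-1/10}h^{-1/2}$ and $C=n^{-1/2}$. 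As $Am^{-1/5}$ and $Bm^{-2/5}$ decrease and $Cm^{1/2}$ increases, $\varphi(m)\asymp\max\{Am^{-1/5},Bm^{-2/5}\}+Cm^{1/2}$, with $Bm^{-2/5}$ the larger summand for $m<m_*\propto(B/A)^5$ and $Am^{-1/5}$ the larger for $m>m_*$. Balancing $Bm^{-2/5}$ against $Cm^{1/2}$ gives $m\propto(B/C)^{10/9}=n^{4/9}h^{-5/9}$ with common value $(nh)^{-5/18}$; balancing $Am^{-1/5}$ against $Cm^{1/2}$ gives $m\propto(A/C)^{10/7}=n(h\log n)^{10/7}$ with common value $(h\log n)^{5/7}$. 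Comparing each candidate optimum with $m_*$ — equivalently, comparing $n^7h^{25}$ with $(\log n)^{-18}$ — shows the first is operative precisely when $h^{25}=O\{n^{-7}(\log n)^{-18}\}$ and the second otherwise, while in either case both candidate values of $m$ lie in the admissible window $n^{-1/4}h^{-5/4}\ll m\le Cn$, so the feasible and unconstrained minima coincide and the minimum order is $(nh)^{-5/18}+(h\log n)^{5/7}$. Finally, in each regime I would realise the optimal $m$ by an admissible $(b,\ell)$: the constraints on $\ell$ in \eqref{eq:opt.expo.nocorr}, namely $nh^{10}\ell^9\to\infty$ and $h^5(\log n)^{-2}\ell^7\to\infty$, are merely the hypothesis $n^{-1}h^{-5}b\ell^{-4}=o(1)$ rewritten for the chosen $m$, and since $b=m/\ell\ge1$ only requires $\ell\le m$, the admissible range of $\ell$ is nonempty and any choice in it produces the claimed rate.

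No single step is hard; the work lies in the coordinated bookkeeping. The two points needing care are: keeping enough control of the $\log\ell$ factor to see that both rates are sharp — not merely upper bounds, and not degraded by spurious logarithmic factors — which rests on admissible $\ell$ being at least a power of $n$ in each regime; and checking that the two constraints on $\ell$ in \eqref{eq:opt.expo.nocorr} simultaneously render the optimal $m$ attainable by some $(b,\ell)$ with $b\ge1$ and coincide with the validity condition $\ell k_1\to\infty$ needed to invoke Theorem~\ref{thm:boot.expo}.
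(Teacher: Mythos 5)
Your proposal is correct and follows essentially the same route as the paper: the displayed error rate is the block-bootstrap contribution to (\ref{boot.err.expo.nocorr}) obtained by specialising Theorem~\ref{thm:boot.expo} to the NBC settings (\ref{bias.cond.nocorr}), and the minimisation over $b\ell$ proceeds by the same case split on $h$ (equivalently, your comparison of the two balance points with the crossover $m_*$, which reproduces the threshold $n^7(\log n)^{18}h^{25}$), with the constraints on $\ell$ in (\ref{eq:opt.expo.nocorr}) arising exactly as the feasibility condition $n^{-1}h^{-5}b\ell^{-4}=o(1)$ evaluated at the optimal $b\ell$. Your additional bookkeeping (the $\log\ell\asymp\log n$ check and the nonemptiness of the admissible range of $\ell$) is sound and only makes explicit what the paper leaves implicit.
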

We see from Theorem~\ref{thm:boot2.expo} that (\ref{eq:opt.expo.nocorr}) covers the subsampling case where $b=1$, provided that
we set $\ell\propto \max\big\{n^{4/9}h^{-5/9},n(h\log n)^{10/7}\big\}$.

Under the conditions (\ref{eq:opt.expo.nocorr}) on $(b,\ell)$ given in Theorem~\ref{thm:boot2.expo}, the error rate
(\ref{boot.err.expo.nocorr}) reduces to
\begin{equation}
O_p\big\{(nh)^{-5/18}+(h\log n)^{5/7}\big\},
\label{eq:opterr2.expo}
\end{equation}
which is larger than (\ref{eq:opterr.expo}). Thus, NBC
is again inferior to EBC under an exponential $\alpha$-mixing rate.

\begin{remark} 
In the absence of explicit bias correction, the scaled bias term, namely
\[(nh)^{1/2}\big\{
\expn\hat{f}_h(x_0)-f(x_0)\big\}=n^{1/2}h^{5/2}f''(x_0)\mu_2/2
+O\big(n^{1/2}h^{9/2}\big),\]
has the block bootstrap analogue
\[ (b\ell k)^{1/2}\Big\{\expn\big[\hat{f}^*_{b,\ell,k}\big|X_1,\ldots,X_n\big]
-\hat{f}_{k_3}(x_0)\Big\}\]
by construction of $\tilde{T}^*_{b,\ell}$,
where $k$ and $k_3$ satisfy (\ref{bias.cond.nocorr}).
The block bootstrap mean $\expn\big[\hat{f}^*_{b,\ell,k}\big|X_1,\ldots,X_n\big]$
is subject to a sampling variation of order $O_p\big((nk)^{-1/2}\big)$, which can only
be offset asymptotically by setting the length of the block bootstrap series $b\ell=o(n)$. It thus follows that the standard MBB,
which takes $b=\lfloor n/\ell\rfloor$, fails to consistently estimate the
scaled bias.
\end{remark}

\begin{remark} 
In view of Theorem~\ref{thm:boot2} we may in practice set,
by considering the limiting case $\beta\rightarrow\infty$,
\begin{equation}
\begin{cases}
\ell\propto (nh^5)^{-1/2}n^{2\epsilon},\;\;b\propto n^{-1 +3\epsilon}h^{-5},
&\text{if\ \ }h=O(n^{-7/25}),\\
b\ell\propto nh^{10/7},\;\;\ell=O(n^{1/2}h^{15/14}),\;\;
h^{5/7}\ell\rightarrow\infty,
&\text{if\ \ }n^{7/25}h\rightarrow\infty,
\end{cases}
\label{prac.nocorr}
\end{equation}
for an arbitrarily small $\epsilon>0$.

Suppose that (\ref{prac.nocorr}) holds for $(b,\ell)$. Then, under the
conditions of Theorem~\ref{thm:nonbt}, the error rate
(\ref{boot.err.nocorr}) reduces to
\begin{multline}
O_p\Big\{n^{-(\beta-1)/(4\beta+2)}h^{-(\beta+1)/(4\beta+2)}+h^{(\beta-2)/\beta}\\+
n^{-1/2+\delta}h^{-(3\beta+3)/(2\beta-2)+g_0(\beta)}
+n^{-5/4(1-2\epsilon)}h^{-15/4}\\+\big(n^{1-2\epsilon}h^5\big)^{(\beta-2)/(2\beta)}
+(nh^5)^{3 g_1(\beta)/2}n^{\delta-\epsilon(3g_1(\beta)+1)}\Big\}
\label{eq:opterr2.nocorr1}
\end{multline}
if $h=O\big(n^{-7/25}\big)$, and to
\begin{multline}
O_p\Big(n^{-(\beta-1)/(4\beta+2)}h^{-(\beta+1)/(4\beta+2)}\\+
h^{(5\beta-10)/(7\beta)}+n^{-1/2+\delta}h^{(30g_1(\beta)-25)/14}\Big)
\label{eq:opterr2.nocorr2}
\end{multline}
if $n^{7/25}h\rightarrow\infty$. Under the
conditions of Theorem~\ref{thm:nonbt.expo}, the error rate
(\ref{boot.err.expo.nocorr}) reduces to
\begin{equation}
\begin{cases}
O_p\big(n^{-5/4(1-2\epsilon)}h^{-15/4}\big),&\text{if\ }h=O\big(n^{-7/25}\big),\\
O_p\big(h^{5/7}\log n\big),&\text{if\ }n^{7/25}h\rightarrow\infty.
\end{cases}
\label{eq:opterr2.expo.nocorr}
\end{equation}
\end{remark} 

\begin{remark} 
Under the conditions of Theorem~\ref{thm:nonbt.expo}, a comparison between (\ref{eq:opterr2.expo}) and
(\ref{eq:opterr2.expo.nocorr})
shows that the error rate given by
the practical choice (\ref{prac.nocorr}) is slower, by at least a slowly-varying factor, than the error
rate  attained by the theoretically optimal choice (\ref{eq:opt.expo.nocorr}).
\end{remark}

\begin{remark}
Denote by $n^{\tilde{q}(\beta)}$ the order of the error term contributed by the block bootstrap to (\ref{boot.err.nocorr}),
that is
\[n^{-1/2}(b\ell)^{1/2}+\big(nh^5/(b\ell)\big)^{(\beta-2)/(5\beta)}
+n^\delta\big(nh^5)^{(6g_1(\beta)-3)/10}(b\ell)^{-(1+3 g_1(\beta))/5}.\]
The following diagrams plot $\tilde{q}(\beta)$, up to an arbitrarily small constant, against $\beta$ on the log scale, when
$(b,\ell)$ is set to be the optimal
choice specified in Theorem~\ref{thm:boot2} (blue) and the practical choice (\ref{prac.nocorr}) (red), respectively.
Three different orders of $h$ are considered: (i) $h\propto n^{-0.2}$,
(ii) $h\propto n^{-0.25}$ and (iii)  $h\propto n^{-0.331}$.
For comparison we plot also the exponent $q_0(\beta)$ (brown), where $n^{q_0(\beta)}$ denotes the order of (\ref{eq:opterr}),
which corresponds to the error
rate of normal approximation based on the ``true'' asymptotic mean and variance of $T_h$.
Note that a positive difference $\tilde{q}(\beta)-q_0(\beta)$ indicates an inflation of error induced by the block bootstrap scheme.
In case (iii), the blue and red lines coincide for $\beta>2.13658$ and are almost indistinguishable for $\beta\in(2,2.13658]$.
\end{remark} 

\begin{center}
\small
\begin{tabular}{cc}
(i) & (ii) \\
\includegraphics[scale=0.35]{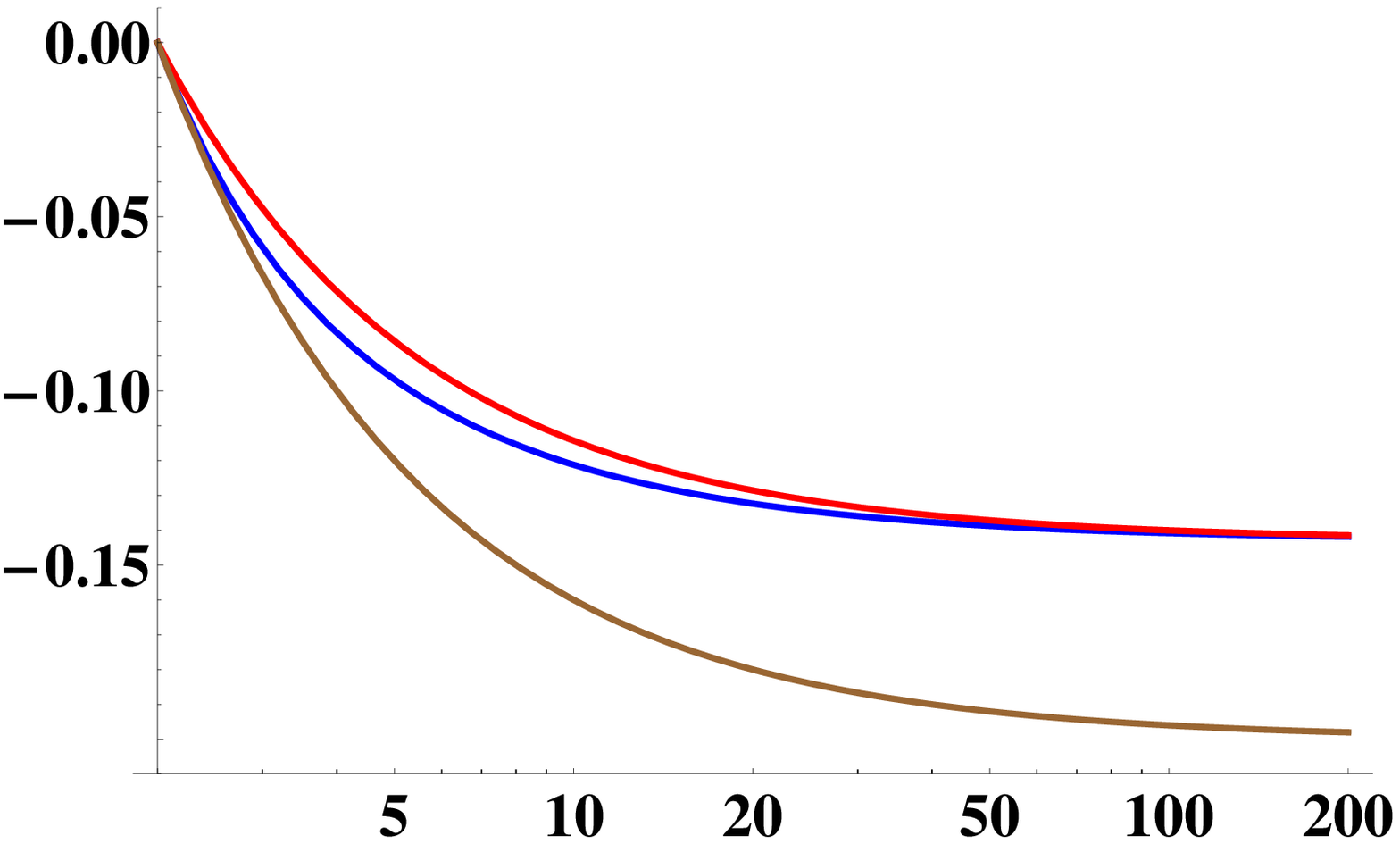} & \includegraphics[scale=0.35]{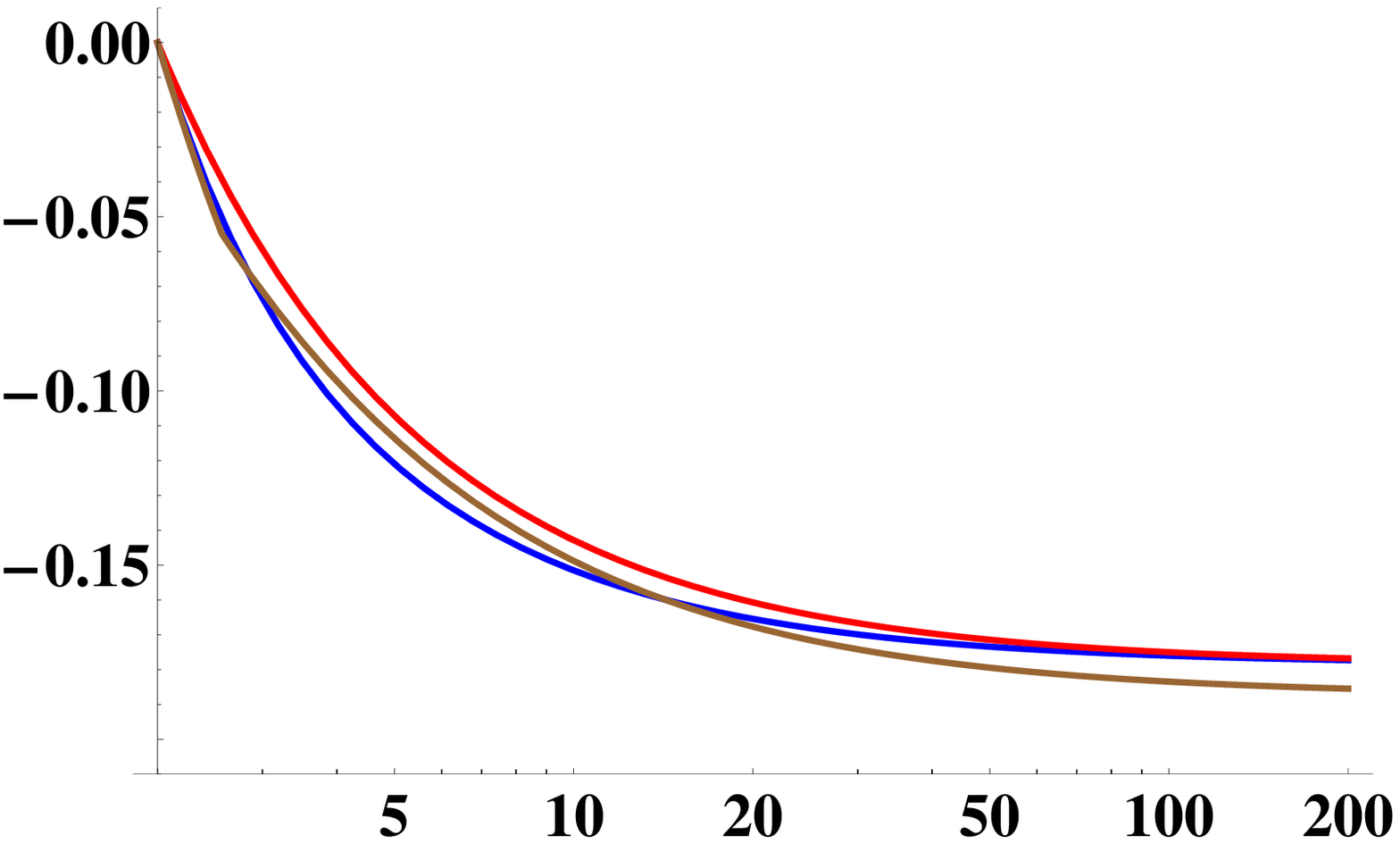}\\
\multicolumn{2}{c}{(iii)}\\
\multicolumn{2}{c}{
\includegraphics[scale=0.35]{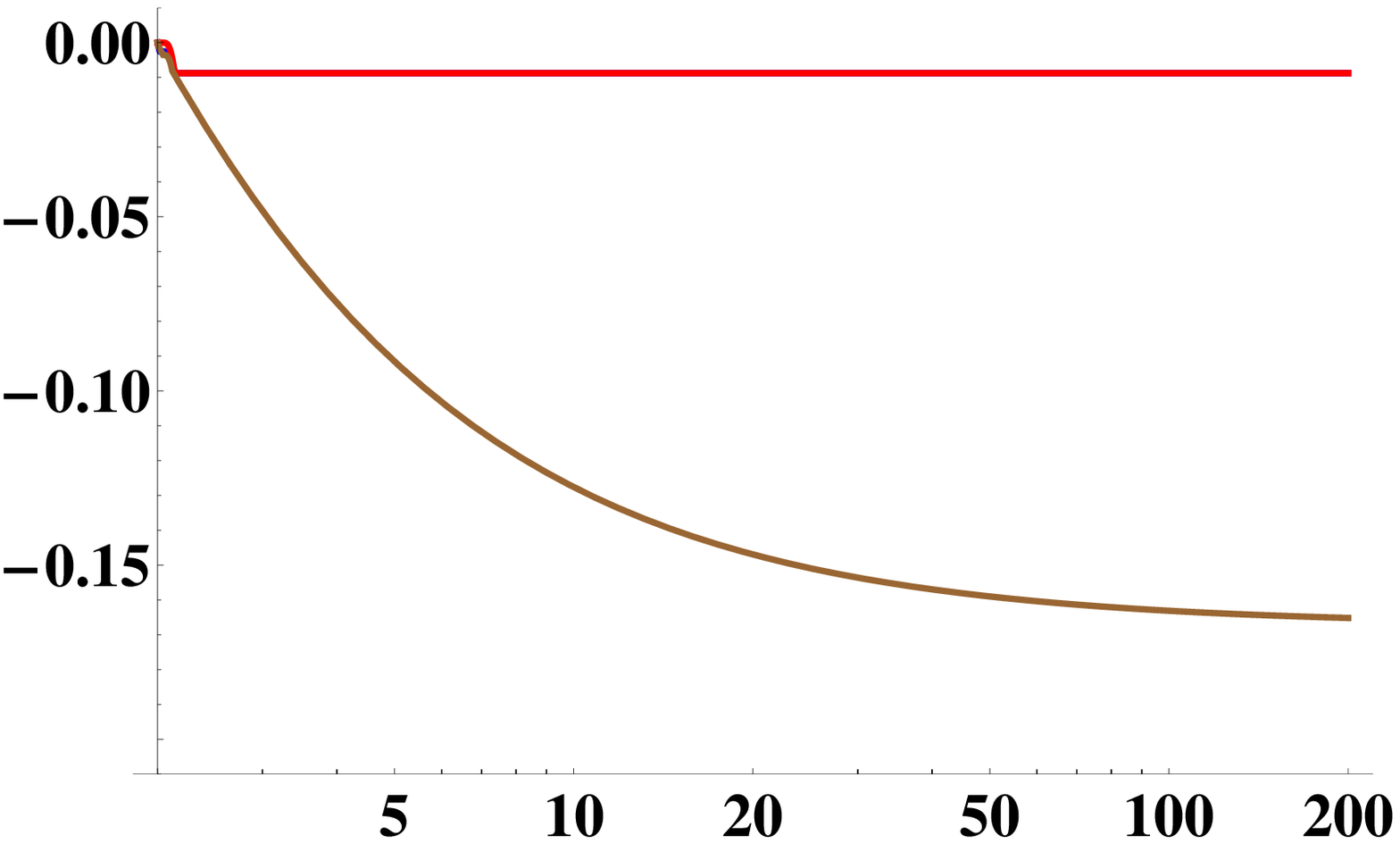}
}
\end{tabular}
\end{center}

\begin{remark}
\citet{Neumann:1998} studied the convergence rate of the kernel density estimator under weak dependence, utilizing arguments for iid data. For comparison with our results, his Theorem 3.3 gives an error term containing $h^{1/2}(log n)^{3/2}$. Taking $h$ to be of the conventional optimal order $n^{-1/5}$, the above term would be bigger than $n^{-1/10}$. 
\end{remark}

\subsection{Choice of $(b,\ell,k)$ for UNS}

We see from Theorems~\ref{thm:nonbt}--\ref{thm:boot.expo} that
\[\prob\big(\check{T}^*_{b,\ell,k}\le \cdot\,\big|X_1,\ldots,X_n\big)-\prob(T_h\le \cdot\,)\]
has an order
\begin{multline}
O_p\Big\{n^{-(\beta-1)/(4\beta+2)}h^{-(\beta+1)/(4\beta+2)}+h^{(\beta-2)/\beta}\\+
n^{-1/2+\delta}h^{-(3\beta+3)/(2\beta-2)+g_0(\beta)}+n^{1/2}h^{5/2}+
k^{(\beta-2)/\beta}\\+b^{-1/2}\ell^{-1/2+\delta}k^{-3/2+3g_1(\beta)}+n^{-1/2+\delta}b^{-1/2}\ell^{1/2}k^{-3(1-g_2(\beta))/2}\Big\}
\label{boot.err.undersm}
\end{multline}
under the conditions of Theorems~\ref{thm:nonbt} and \ref{thm:boot}, and
\begin{equation}
O_p\Big\{h\log n+(nh)^{-1/2}+n^{1/2}h^{5/2}+
k\log\ell+n^{-1/2}\ell^{1/2}+(b\ell k)^{-1/2}\Big\}
\label{boot.err.expo.undersm}
\end{equation}
under the conditions of Theorems~\ref{thm:nonbt.expo} and \ref{thm:boot.expo}.

Denote by $\beta_2$ the biggest solution to the equation
\[ 3g_2(\beta_2)=(\beta_2-4)/\beta_2,\]
which yields $\beta_2\approx 6.0538$.
The following theorem establishes optimal choices of $(b,\ell,k)$ which minimise
(\ref{boot.err.undersm}). 

\begin{thm}
	\label{thm:boot3}
	Assume the conditions of Theorem~\ref{thm:nonbt}.
	Let $y\in\mathbb{R}$ be fixed. Then,  for any arbitrarily small  $\delta>0$,
	\begin{multline*}
	\prob\big(\check{T}^*_{b,\ell,k}\le y\big|X_1,\ldots,X_n\big)-\Phi\left(\dfrac{y-n^{1/2}h^{5/2}f''(x_0)\mu_2/2}{\sqrt{f(x_0)\nu_2}}\right)\\
	=O_p\Big\{n^{1/2}h^{5/2}+
	k^{(\beta-2)/\beta}+b^{-1/2}\ell^{-1/2+\delta}k^{-3/2+3g_1(\beta)}\\+n^{-1/2+\delta}b^{-1/2}\ell^{1/2}k^{-3(1-g_2(\beta))/2}\Big\}
	\end{multline*}
	has a minimum order
	\begin{multline*}
	    O_p\Big\{n^{1/2}h^{5/2}+n^{-(\beta-2)/\{\beta (5-6 g_1(\beta))-4\}+\delta}\\
	+n^{-(\beta - 2)/(3\beta)+\delta}+n^{-2 (\beta-2)/\{\beta(7-3 g_2(\beta))-4\}+\delta}\Big\},
	    \end{multline*}
	attained by setting $b\propto n/\ell$, $\ell=k^{-1-\delta'}$ and
	\[k\propto\begin{cases}
n^{-\beta/\{\beta (5-6 g_1(\beta))-4\}},&2<\beta<\beta_1,\\
n^{-2 \beta/\{\beta(7-3 g_2(\beta))-4\}},&4<\beta<\beta_2,\\
n^{-1/3+\delta'},& \beta\in[\beta_1,4]\cup[\beta_2,\infty),
	\end{cases}\]
	for sufficiently small $\delta'>0$.
	\end{thm}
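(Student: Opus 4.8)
The plan is to obtain the displayed expansion as a special case of Theorem~\ref{thm:boot} and then to carry out a purely deterministic minimisation of the resulting rate over $(b,\ell,k)$. For the expansion, I would specialise Theorem~\ref{thm:boot} to the undersmoothing parametrisation $\tau=0$, $k_1=k$. Putting $\tau=0$ annihilates the bias-correction bracket $\tau[k_2^4+k_3^4+(nk_2)^{-1/2}+(nk_3)^{-1/2}]$ and collapses the Gaussian term to $\Phi\big(y/\sqrt{f(x_0)\nu_2}\big)$, so that, whenever $b\ell=O(n)$, $\ell=O(bk)$ and $n^{-1}\ell+(\ell k)^{-1}+k=O(n^{-\epsilon})$,
\[\prob\big(\check{T}^*_{b,\ell,k}\le y\,\big|\,X_1,\ldots,X_n\big)=\Phi\big(y/\sqrt{f(x_0)\nu_2}\big)+O_p\big(R(b,\ell,k)\big),\]
with $R(b,\ell,k)=k^{(\beta-2)/\beta}+b^{-1/2}\ell^{-1/2+\delta}k^{-3/2+3g_1(\beta)}+n^{-1/2+\delta}b^{-1/2}\ell^{1/2}k^{-3(1-g_2(\beta))/2}$. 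Subtracting $\Phi\big((y-n^{1/2}h^{5/2}f''(x_0)\mu_2/2)/\sqrt{f(x_0)\nu_2}\big)$ and invoking the mean value theorem together with boundedness of the standard normal density turns the mismatch of the two Gaussian centres into the extra term $O(n^{1/2}h^{5/2})$, the scaled bias left uncorrected under undersmoothing. This is exactly the stated four-term order; the three side conditions above are to be verified at the end for the optimal triples.

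For the minimisation, observe first that $n^{1/2}h^{5/2}$ is free of $(b,\ell,k)$ while the last two summands of $R$ are decreasing in $b$, so one takes $b\asymp n/\ell$ (its largest feasible value), after which $\ell=O(bk)$ reads $\ell=O\big((nk)^{1/2}\big)$. With $b\asymp n/\ell$, the two $\ell$-dependent summands of $R$ become increasing in $\ell$, so one takes $\ell$ as small as $(\ell k)^{-1}=O(n^{-\epsilon})$ permits, i.e.\ $\ell\asymp k^{-1-\delta'}$ for a small $\delta'>0$; then $\ell=O\big((nk)^{1/2}\big)$ amounts to $k\gtrsim n^{-1/3}$, up to the $\delta'$-slack. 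Absorbing the ensuing $n^{o(1)}$ factors into $\delta$, the problem reduces to minimising over $k\in[n^{-1/3+\delta'},n^{-\epsilon}]$ the function $\psi(k)=k^{a}+n^{-1/2}k^{-p}+n^{-1}k^{-q}$, where $a=(\beta-2)/\beta>0$, $p=3/2-3g_1(\beta)$ and $q=5/2-\tfrac32 g_2(\beta)>0$ (the positivity of $q$ since $g_2(\beta)\le 1$), while the sign of $p$ varies with $\beta$.

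The minimiser is located by balancing the increasing term $k^a$ against whichever decreasing term is active. The balance of $k^a$ with $n^{-1/2}k^{-p}$ occurs at $k\asymp n^{-\beta/\{\beta(5-6g_1(\beta))-4\}}$, which lies in the feasible range iff $g_1(\beta)\le(\beta-2)/(3\beta)$, i.e.\ iff $\beta\le\beta_1$; the balance of $k^a$ with $n^{-1}k^{-q}$ occurs at $k\asymp n^{-2\beta/\{\beta(7-3g_2(\beta))-4\}}$, feasible iff $g_2(\beta)\le(\beta-4)/(3\beta)$, which fails for $\beta\le 4$ (the right side being $\le 0\le g_2(\beta)$) but holds on $(4,\beta_2)$. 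One then checks: on $(2,\beta_1)$ the first balance is active and the last summand of $\psi$ is of strictly smaller order there, so $\min\psi\asymp n^{-(\beta-2)/\{\beta(5-6g_1(\beta))-4\}+\delta}$; on $[\beta_1,4]\cup[\beta_2,\infty)$ neither balance point is feasible, the constrained minimum sits at the endpoint $k\asymp n^{-1/3+\delta'}$, and there $k^a=n^{-(\beta-2)/(3\beta)+\delta}$ dominates the other two summands; on $(4,\beta_2)$ the second balance is active, the middle summand is dominated, and $\min\psi\asymp n^{-2(\beta-2)/\{\beta(7-3g_2(\beta))-4\}+\delta}$. The sign change of $p$, across $g_1(\beta)=1/2$, occurs inside $(4,\beta_2)$ and does not alter which balance is active, so it introduces no further regime boundary. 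Adding back $n^{1/2}h^{5/2}$, translating $k$ into $\ell=k^{-1-\delta'}$ and $b\asymp n/\ell$, and checking that these triples satisfy the side conditions of Theorem~\ref{thm:boot} for $\delta,\delta',\epsilon$ small, yields the claimed minimum order together with the attaining configuration; that no feasible $(b,\ell,k)$ can do better is immediate from the monotonicity reductions above.

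The main obstacle is the regime-by-regime bookkeeping underlying the previous paragraph: one must establish the analytic facts about $g_1$ and $g_2$ that place the thresholds at exactly $\beta_1$, $4$ and $\beta_2$ — in particular that $\gamma_0(\beta,2d)=1$ whenever $\beta\le 2d-1$, which forces $g_2\equiv 0$ on an interval above $\beta=4$, and that $\beta_1$ (resp.\ $\beta_2$) is the unique (resp.\ largest) solution of $g_1(\beta)=(\beta-2)/(3\beta)$ (resp.\ $g_2(\beta)=(\beta-4)/(3\beta)$) — and then confirm in each regime that the summand declared dominant indeed dominates the other two at the chosen $k$. These dominance checks reduce to inequalities of the schematic form $3\beta-12\beta g_1(\beta)+3\beta g_2(\beta)\gtrless 4$ whose direction reverses precisely at the stated thresholds, and tracking them through the piecewise nature of $g_1,g_2$ caused by the indicator inside $\gamma_0$ is the delicate part of the argument.
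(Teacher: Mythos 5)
Your proposal is correct and follows essentially the same route as the paper's own proof: specialise Theorem~\ref{thm:boot} to $\tau=0$, $k_1=k$, exploit monotonicity of the error in $(b,\ell)$ to fix $b\propto n/\ell$ and $\ell=k^{-1-\delta'}$, and then balance the three $k$-dependent terms with regime boundaries at $\beta_1$, $4$ and $\beta_2$. The only addition is that you make explicit the mean-value-theorem accounting for the uncorrected bias term $n^{1/2}h^{5/2}$ and the term-dominance checks, which the paper leaves implicit.
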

Under the conditions on $(b,\ell,k)$ given in Theorem~\ref{thm:boot3}, the error rate
(\ref{boot.err.undersm}) is in general larger than (\ref{eq:opterr}). Thus, UNS
is in general inferior to EBC under a polynomial $\alpha$-mixing rate with $\beta>2$.

\begin{thm}
	\label{thm:boot3.expo}
	Assume the conditions of Theorem~\ref{thm:nonbt.expo}.
	Let $y\in\mathbb{R}$ be fixed. Then
	\begin{multline*}
	\prob\big(\check{T}^*_{b,\ell,k}\le y\big|X_1,\ldots,X_n\big)-
	\Phi\left(\dfrac{y-n^{1/2}h^{5/2}f''(x_0)\mu_2/2}{\sqrt{f(x_0)\nu_2}}\right)\\
	=O_p\Big\{n^{1/2}h^{5/2}+
	k\log\ell+n^{-1/2}\ell^{1/2}+(b\ell k)^{-1/2}\Big\}
	\end{multline*}
	has a minimum order $O_p\big\{n^{1/2}h^{5/2}+n^{-1/3}(\log n)^{1/3}L_n^{-1}\big\}$,
	attained by setting $b\propto n/\ell$, $\ell\propto (kL_n^3)^{-1}$ and
	$k\propto n^{-1/3}(\log n)^{-2/3}L_n^{-1}$, for any
positive, slowly varying, sequence $\{L_n\}$ converging to 0.	
\end{thm}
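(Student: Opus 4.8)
The plan is to obtain the displayed expansion as an immediate consequence of Theorems~\ref{thm:nonbt.expo} and \ref{thm:boot.expo}, and then to reduce the optimality assertion to an elementary constrained minimisation of a deterministic error function. Observe first that $\check{T}^*_{b,\ell,k}$ is the special case of $T^*_{b,\ell,\tau,k_1,k_2,k_3}$ with $\tau=0$ and $k_1=k$; since $k_2,k_3$ then enter only through the multiplicative factor $\tau$, one may fix $k_2=k_3=n^{-1/2}$, so that the bandwidth conditions of Theorem~\ref{thm:boot.expo} reduce to $b\ell=O(n)$ and $n^{-1}\ell+(\ell k)^{-1}+k=o(1)$. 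I would then check that the proposed choices $b\propto n/\ell$, $\ell\propto(kL_n^3)^{-1}$, $k\propto n^{-1/3}(\log n)^{-2/3}L_n^{-1}$ satisfy these: because $L_n$ is slowly varying, $\ell\asymp n^{1/3}(\log n)^{2/3}L_n^{-2}=o(n)$ with $1\le\ell\le n$, $k=o(1)$, $\ell k\asymp L_n^{-3}\to\infty$ so $(\ell k)^{-1}=o(1)$, $b\ell\asymp n$, and $1\le b\asymp n^{2/3}(\log n)^{-2/3}L_n^{2}\le\lfloor n/\ell\rfloor$ for large $n$. Applying Theorem~\ref{thm:boot.expo} with $\tau=0$ gives $\prob(\check{T}^*_{b,\ell,k}\le y\,\big|\,X_1,\ldots,X_n)=\Phi(y/\sqrt{f(x_0)\nu_2})+O_p\{k\log\ell+n^{-1/2}\ell^{1/2}+(b\ell k)^{-1/2}\}$, and since $\Phi$ is Lipschitz, $\Phi(y/\sqrt{f(x_0)\nu_2})-\Phi\big((y-n^{1/2}h^{5/2}f''(x_0)\mu_2/2)/\sqrt{f(x_0)\nu_2}\big)=O(n^{1/2}h^{5/2})$; combining the two yields the stated expansion, valid for every $(b,\ell,k)$ admissible under Theorem~\ref{thm:boot.expo}.

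It then remains to minimise $E(b,\ell,k)=k\log\ell+n^{-1/2}\ell^{1/2}+(b\ell k)^{-1/2}$, the $n^{1/2}h^{5/2}$ term being free of the tuning parameters, over the feasible region. I would argue in four steps. First, $E$ is decreasing in $b$, so the optimum is $b\asymp n/\ell$, whence $(b\ell k)^{-1/2}\asymp(nk)^{-1/2}$. Second, for fixed $k$ the surviving terms $k\log\ell$ and $n^{-1/2}\ell^{1/2}$ are increasing in $\ell$, so $\ell$ should be as small as the constraint $\ell k\to\infty$ permits; writing $\ell\asymp k^{-1}\omega_n$ with $\omega_n\to\infty$ an arbitrarily slowly growing sequence, $E\asymp k\log(1/k)+\omega_n^{1/2}(nk)^{-1/2}$, the terms $k\log\omega_n$ and $(nk)^{-1/2}$ being of smaller order. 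Third, for fixed $\omega_n$, balancing $k\log(1/k)$ against $\omega_n^{1/2}(nk)^{-1/2}$ gives $k^{3/2}\log(1/k)\asymp\omega_n^{1/2}n^{-1/2}$, hence $k\asymp n^{-1/3}(\log n)^{-2/3}\omega_n^{1/3}$ (for which indeed $\log(1/k)\sim\tfrac13\log n$), at which $E\asymp n^{-1/3}(\log n)^{1/3}\omega_n^{1/3}$. Fourth, the infimum over admissible $\omega_n$ is not attained, but for any prescribed positive slowly varying $L_n\to0$ the choice $\omega_n=L_n^{-3}$ produces $k\asymp n^{-1/3}(\log n)^{-2/3}L_n^{-1}$, $\ell\asymp k^{-1}\omega_n=(kL_n^3)^{-1}$, $b\asymp n/\ell$ and $E\asymp n^{-1/3}(\log n)^{1/3}L_n^{-1}$; with this choice $(b\ell k)^{-1/2}\asymp n^{-1/3}(\log n)^{1/3}L_n^{1/2}$ is of strictly smaller order, as required. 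Substituting into the expansion of the first step gives the claimed minimum order $O_p\{n^{1/2}h^{5/2}+n^{-1/3}(\log n)^{1/3}L_n^{-1}\}$.

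I expect the passage through Theorem~\ref{thm:boot.expo} and the Lipschitz estimate for $\Phi$ to be routine; the one delicate point is the boundary constraint $\ell k\to\infty$ exploited in the second through fourth steps. A naive optimisation would want $\ell k$ of constant order, which is infeasible, and it is precisely the unavoidable but arbitrarily slow divergence of $\ell k$ that forces the slowly varying inflation factor $L_n^{-1}$ and accounts for the statement being parametrised by an arbitrary such $L_n$. I would accordingly take care to verify throughout that $\log\omega_n=o(\log(1/k))$ and $\log L_n^{-1}=o(\log n)$, so that $\log\ell\sim\tfrac13\log n$ and the asymptotic equivalences used in the minimisation genuinely hold.
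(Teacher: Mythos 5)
Your proposal is correct and follows essentially the same route as the paper's own proof: invoke Theorem~\ref{thm:boot.expo} with $\tau=0$ (the $n^{1/2}h^{5/2}$ term arising from the uncorrected bias shift in the normal centring, which you make explicit via the Lipschitz bound on $\Phi$), then minimise $k\log\ell+n^{-1/2}\ell^{1/2}+(b\ell k)^{-1/2}$ by taking $b$ as large and $\ell$ as small as the constraints $b\ell=O(n)$ and $\ell k\to\infty$ permit, yielding $b\propto n/\ell$, $\ell\propto(kL_n^3)^{-1}$ and $k\propto n^{-1/3}(\log n)^{-2/3}L_n^{-1}$. Your additional care over the unattained infimum and the role of the slowly diverging factor $\omega_n=L_n^{-3}$ is a more explicit rendering of what the paper states tersely, not a different argument.
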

Under the conditions on $(b,\ell,k)$ given in Theorem~\ref{thm:boot3.expo}, the error rate
(\ref{boot.err.expo.undersm}) reduces to
\begin{equation}
O_p\big\{h\log n+(nh)^{-1/2}+n^{1/2}h^{5/2}+n^{-1/3}(\log n)^{1/3}L_n^{-1}\big\},
\label{eq:opterr3.expo}
\end{equation}
which is never smaller than (\ref{eq:opterr.expo}). In fact, it is bigger than
(\ref{eq:opterr.expo}) if $n^{-1/3}(\log n)^{2/3}=o(h)$.  Thus, UNS is in general never superior 
to EBC under an exponential $\alpha$-mixing rate.

Note that
(\ref{eq:opterr3.expo}) is smaller than the optimal rate
(\ref{eq:opterr2.expo}) given by NBC if and only if
$h=o\big\{n^{-7/25}(\log n)^{2/5}\big\}$. In other words, if both methods are
tuned optimally, UNS is preferable to NBC when $h=o\big\{n^{-7/25}(\log n)^{2/5}\big\}$. 

The following table summarises the optimal error rates under the conditions of Theorem~\ref{thm:nonbt.expo} and different asymptotic regimes of $h$.

\vspace{2ex}\hspace{-10ex}\renewcommand{\arraystretch}{1.2}{\footnotesize
\begin{tabular}{|c|ccc|}
	\hline
	& \multicolumn{3}{c|}{Method} \\
	Range of $h$ & EBC & NBC & UNS \\ \hline
		$h\propto n^{-1/5}$ & 
	{\color{blue}$h\log n$} & {$(h\log n)^{5/7}$} & {\color{red}(inconsistent)} \\ 
	$n^{-7/25}(\log n)^{2/5}\preceq h\prec n^{-1/5}$ & 
	{\color{blue}$h\log n$} & {$(h\log n)^{5/7}$} & {\color{red}$n^{1/2}h^{5/2}$} \\ 
$n^{-7/25}(\log n)^{-18/25}\preceq h\preceq n^{-7/25}(\log n)^{2/5}$ & 
{\color{blue}$h\log n$} & {\color{red}$(h\log n)^{5/7}$} & {$n^{1/2}h^{5/2}$} \\ 	
$n^{-1/3}(\log n)^{2/3}\preceq h\preceq n^{-7/25}(\log n)^{-18/25}$ & 
{\color{blue}$h\log n$} & {\color{red}$(nh)^{-5/18}$} & {$n^{1/2}h^{5/2}$} \\ 
$n^{-1/3}(\log n)^{-2/3}L_n^{-1}\preceq h\preceq n^{-1/3}(\log n)^{2/3}$ & 
{\color{blue}$h\log n$} & {\color{red}$(nh)^{-5/18}$} & {\color{blue}$h\log n$} \\
$n^{-1/3}(\log n)^{-2/3}L_n^{2}\preceq h\preceq n^{-1/3}(\log n)^{-2/3}L_n^{-1}$ & 
{\color{blue}$n^{-1/3}(\log n)^{1/3}L_n^{-1}$} & {\color{red}$(nh)^{-5/18}$} & {\color{blue}$n^{-1/3}(\log n)^{1/3}L_n^{-1}$} \\
$n^{-1}\prec h\preceq n^{-1/3}(\log n)^{-2/3}L_n^{2}$ & 
{\color{blue}$(nh)^{-1/2}$} & {\color{red}$(nh)^{-5/18}$} & {\color{blue}$(nh)^{-1/2}$} \\	\hline
\multicolumn{4}{l}{\footnotesize Notation: $a_n\prec b_n$ iff $a_n=o(b_n)$, $a_n\preceq b_n$ iff $a_n=O(b_n)$.}\\
\multicolumn{4}{l}{\footnotesize  Best rate (blue), worst rate (red) under each $h$ regime.}
	\end{tabular}
}

\subsection{Comparison with i.i.d.\ case}
Consider the case where $(X_1,\ldots,X_n)$ are i.i.d.\ and applications of EBC, NBC and UNS to
estimate the distribution of $T_h$, with block length $\ell$ set to 1.

Define $\sigma^2_h=(nh)\text{Var}\big(\hat{f}_h(x_0)\big)$. For completeness we provide i.i.d.\ analogues of Theorems~\ref{thm:nonbt.expo}
and \ref{thm:boot.expo}. For brevity, proofs are omitted.
\begin{thm}
	\label{thm:nonbt.iid}
	Suppose that $nh\rightarrow\infty$ and $h=O(n^{-1/5})$.
	Then, for any fixed $y\in\mathbb{R}$,
	\begin{eqnarray*}
		\lefteqn{
			\prob\Big((nh)^{1/2}\big\{\hat{f}_h(x_0)-f(x_0)\big\}\le y\Big)}\\&=&
		\Phi\left(\dfrac{y-n^{1/2}h^{5/2}f''(x_0)\mu_2/2}{\sigma_h}\right)+O\big\{n^{1/2}h^{9/2}+(nh)^{-1/2}\big\}.
	\end{eqnarray*}
\end{thm}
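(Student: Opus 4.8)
The plan is to realize $(nh)^{1/2}\{\hat{f}_h(x_0)-f(x_0)\}$ as a standardized sum of bounded i.i.d.\ summands shifted by a deterministic bias, and then to apply the classical Berry--Esseen theorem while carefully tracking the $h$-dependence of every moment. Put $Y_i=K\big((X_i-x_0)/h\big)$, so the $Y_i$ are i.i.d.\ with $|Y_i|\le\|K\|_\infty$, and write
\[
(nh)^{1/2}\big\{\hat{f}_h(x_0)-f(x_0)\big\}=S_n+B_n,
\]
where $S_n=(nh)^{-1/2}\sum_{i=1}^n\big(Y_i-\expn Y_i\big)$ and $B_n=(nh)^{1/2}\big\{\expn\hat{f}_h(x_0)-f(x_0)\big\}$ is deterministic.

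First I would treat the bias. A Taylor expansion of $\expn\hat{f}_h(x_0)=\int K(u)f(x_0+uh)\,du$, using the symmetry of $K$ (which kills the first- and third-order terms) and the four-times continuous differentiability of $f$ at $x_0$, gives $\expn\hat{f}_h(x_0)-f(x_0)=h^2f''(x_0)\mu_2/2+O(h^4)$; the $O(h^4)$ remainder is routine under the stated kernel assumptions supplemented, as is standard, by a finite fourth moment or compact support for $K$. Hence $B_n=n^{1/2}h^{5/2}f''(x_0)\mu_2/2+O(n^{1/2}h^{9/2})$.

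Next comes the second-moment bookkeeping: $\expn Y_1^2=h\int K(u)^2f(x_0+uh)\,du=hf(x_0)\nu_2+O(h^2)$ and $\expn Y_1=O(h)$, so $\text{Var}(Y_1)=hf(x_0)\nu_2+O(h^2)$, which, since $f(x_0)>0$, is bounded below by a constant multiple of $h$. Consequently $\sigma_h^2=(nh)\text{Var}(\hat{f}_h(x_0))=h^{-1}\text{Var}(Y_1)=f(x_0)\nu_2+O(h)\asymp 1$ and $\text{Var}(S_n)=\sigma_h^2$. Since $S_n/\sigma_h=(n\text{Var}(Y_1))^{-1/2}\sum_{i=1}^n(Y_i-\expn Y_i)$ is a standardized sum of i.i.d.\ variables, Berry--Esseen yields
\[
\sup_{t\in\mathbb{R}}\big|\prob\big(S_n/\sigma_h\le t\big)-\Phi(t)\big|\le\frac{C\,\expn|Y_1-\expn Y_1|^3}{\sqrt{n}\,(\text{Var}(Y_1))^{3/2}}\le\frac{2C\|K\|_\infty}{\sqrt{n\,\text{Var}(Y_1)}}=O\big((nh)^{-1/2}\big),
\]
where the middle step uses $\expn|Y_1-\expn Y_1|^3\le\|Y_1-\expn Y_1\|_\infty\,\text{Var}(Y_1)\le 2\|K\|_\infty\text{Var}(Y_1)$.

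Finally I would assemble the estimate: writing the target as $\prob(S_n\le y-B_n)=\prob\big(S_n/\sigma_h\le(y-B_n)/\sigma_h\big)$ and invoking the displayed bound gives $\Phi\big((y-B_n)/\sigma_h\big)+O((nh)^{-1/2})$, after which the Lipschitz bound $|\Phi(a)-\Phi(b)|\le|a-b|/\sqrt{2\pi}$ together with $\sigma_h\asymp 1$ lets me replace $B_n$ by $n^{1/2}h^{5/2}f''(x_0)\mu_2/2$ at the cost of an extra $O(n^{1/2}h^{9/2})$, which is exactly the asserted rate. The hypothesis $h=O(n^{-1/5})$ keeps the leading bias bounded so that the normal approximation is non-degenerate, while $nh\to\infty$ forces the error to vanish. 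No Edgeworth correction is needed: the rescaled third cumulant of $Y_1$ is itself of order $h^{-1/2}$, so the leading Edgeworth term is already $O((nh)^{-1/2})$ and Berry--Esseen is sharp at this level. There is no serious obstacle here; the only points demanding care are establishing the $O(h^4)$ order of the bias remainder and tracking the exact $h$-scaling of $\expn|Y_1-\expn Y_1|^3$ and $\text{Var}(Y_1)$ so that the Berry--Esseen ratio collapses to $(nh)^{-1/2}$ — the latter relying essentially on $f(x_0)>0$. Using the exact scaled standard deviation $\sigma_h$ rather than $\sqrt{f(x_0)\nu_2}$ in the denominator is precisely what removes an additional $O(h)$ term from the error.
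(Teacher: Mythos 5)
Your proof is correct. The paper omits the proof of this theorem, but the route it takes for the proved analogues (Theorems~\ref{thm:nonbt} and \ref{thm:nonbt.expo}, via the cumulant Lemmas~\ref{lem:cum-mom} and \ref{lem:cum-mom.expo}) is to bound all cumulants $\kappa_{n,d,h}$ of $\hat f_h(x_0)$ and read off the normal-approximation error from the standardized cumulants of order $d\ge 3$. You instead apply the classical Berry--Esseen theorem to $S_n/\sigma_h=(n\,\text{Var}\,Y_1)^{-1/2}\sum_i(Y_i-\expn Y_i)$, which is available precisely because the data are i.i.d.; the bound $\expn|Y_1-\expn Y_1|^3\le 2\|K\|_\infty\text{Var}(Y_1)$ collapses the Lyapunov ratio to $O\{(nh)^{-1/2}\}$ with no need to control moments of order $\ge 4$, and your closing observation that the standardized third cumulant is itself $O(h^{-1/2})$ correctly explains why an Edgeworth correction could not improve on this rate, so Berry--Esseen is sharp here. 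This is more elementary and fully self-contained, at the price of not generalizing to the dependent case, where the paper's cumulant machinery is what does the work. The remaining ingredients --- the bias expansion $B_n=n^{1/2}h^{5/2}f''(x_0)\mu_2/2+O(n^{1/2}h^{9/2})$, the Lipschitz replacement of $B_n$ by its leading term inside $\Phi$ (legitimate since $\sigma_h^2=f(x_0)\nu_2+O(h)$ is bounded away from zero because $f(x_0)>0$ and $\nu_2>0$), and the point that standardizing by $\sigma_h$ rather than $\sqrt{f(x_0)\nu_2}$ absorbs the $O(h)$ variance error --- are all as the statement requires. The one loose end, which you flag yourself, is that the $O(h^4)$ bias remainder needs $\int u^4K(u)\,du<\infty$ or compact support of $K$; the paper makes the same tacit assumption in the $d=1$ case of Lemma~\ref{lem:cum-mom}, so this is not a defect of your argument.
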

\begin{thm}
	\label{thm:boot.iid}
	Suppose that $b=O(n)$, $b\rightarrow\infty$,
	$(bk_1)^{-1}+k_1=o(1)$ and $(nk_j)^{-1}+k_j=o(1)$, $j=2,3$.
	Let $y\in\mathbb{R}$ be fixed. Then
	\begin{multline*}
	\prob\big(T^*_{b,1,\tau,k_1,k_2,k_3}\le y\big|X_1,\ldots,X_n\big)=
	\Phi\left(\dfrac{y-\tau(k_2^2-k_3^2)f''(x_0)\mu_2/2}{\sigma_{k_1}}\right)\\
	+\,O_p\Big\{(bk_1)^{-1/2}+\tau\big[k_2^4+k_3^4+(nk_2)^{-1/2}+(nk_3)^{-1/2}\big]\Big\}.
	\end{multline*}
\end{thm}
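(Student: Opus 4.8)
The plan is to reduce everything, conditionally on $X_1,\ldots,X_n$, to a single application of the classical Berry--Esseen bound for standardized i.i.d.\ sums; none of the cumulant machinery behind Theorems~\ref{thm:nonbt}--\ref{thm:boot.expo} is needed, which is why the stated remainder carries no skewness correction. When $\ell=1$ the resampled blocks are single observations, so $n-\ell+1=n$ and $\expn[\hat f^*_{b,1,k}\mid X_1,\ldots,X_n]=\hat f_k(x_0)$, giving
\[
T^*_{b,1,\tau,k_1,k_2,k_3}=(k_1/b)^{1/2}\sum_{i=1}^{b}\big(U^*_{i,k_1,1}-\hat f_{k_1}(x_0)\big)+\widehat D,\qquad
\widehat D:=\tau\big\{\hat f_{k_2}(x_0)-\hat f_{k_3}(x_0)\big\}.
\]
Conditionally on the data, $U^*_{1,k_1,1},\ldots,U^*_{b,k_1,1}$ are i.i.d., bounded by $k_1^{-1}\sup|K|$, with mean $\hat f_{k_1}(x_0)$, variance $\widehat V:=n^{-1}\sum_{j=1}^{n}U_{j,k_1,1}^2-\hat f_{k_1}(x_0)^2$ and third absolute central moment $\widehat\rho:=n^{-1}\sum_{j=1}^{n}\big|U_{j,k_1,1}-\hat f_{k_1}(x_0)\big|^3$, while $\widehat D$ is a constant given the data. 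Thus $\prob(T^*_{b,1,\tau,k_1,k_2,k_3}\le y\mid X_1,\ldots,X_n)$ is exactly the conditional distribution function of a standardized i.i.d.\ sum of $b$ terms evaluated at $y-\widehat D$.

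First I would control the conditional moments. Since $\expn\big[k_1^{-m}|K((X_1-x_0)/k_1)|^{m}\big]=k_1^{-(m-1)}\int|K(u)|^{m}f(x_0+k_1u)\,du$ is of order $k_1^{-(m-1)}$ for $m=1,2,3,4$ (using $f(x_0)>0$, $\sup_x f(x)<\infty$, continuity of $f$ at $x_0$, and $\int|K|^{m}\le(\sup|K|)^{m-2}\nu_2<\infty$), Markov's inequality together with $\hat f_{k_1}(x_0)=O_p(1)$ gives $\widehat\rho=O_p(k_1^{-2})$ and $k_1\widehat V=\sigma_{k_1}^2+O_p((nk_1)^{-1/2})$ with $\sigma_{k_1}^2\to f(x_0)\nu_2>0$; in particular $k_1\widehat V\xrightarrow{\,p\,}f(x_0)\nu_2$, so there is an event $\mathcal E_n$ with $\prob(\mathcal E_n)\to1$ on which both $(k_1\widehat V)^{1/2}$ and $\sigma_{k_1}$ lie in a fixed compact subset of $(0,\infty)$ and $\widehat V\ge c\,k_1^{-1}$ for a fixed $c>0$. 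On $\mathcal E_n$, the Berry--Esseen theorem applied to $\widehat V^{-1/2}b^{-1/2}\sum_i(U^*_{i,k_1,1}-\hat f_{k_1}(x_0))$ yields, uniformly in $z$,
\[
\prob\Big((bk_1)^{1/2}\big\{\hat f^*_{b,1,k_1}-\hat f_{k_1}(x_0)\big\}\le z\,\Big|\,X_1,\ldots,X_n\Big)
=\Phi\!\left(\frac{z}{(k_1\widehat V)^{1/2}}\right)+O_p\!\left(\frac{\widehat\rho}{\widehat V^{3/2}b^{1/2}}\right),
\]
and by the above the remainder is $O_p\big(k_1^{-2}k_1^{3/2}b^{-1/2}\big)=O_p\big((bk_1)^{-1/2}\big)$.

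It then remains only to clean up the scaling and the bias constant. Using the elementary uniform bound $\sup_z|\Phi(z/a)-\Phi(z/b)|\le C|a-b|$ for $a,b$ in a fixed compact subset of $(0,\infty)$, together with $|(k_1\widehat V)^{1/2}-\sigma_{k_1}|=O_p(|k_1\widehat V-\sigma_{k_1}^2|)=O_p((nk_1)^{-1/2})=O_p((bk_1)^{-1/2})$ (here $b=O(n)$), I can replace $(k_1\widehat V)^{1/2}$ by $\sigma_{k_1}$ in the display above. For the bias-correction constant, the standard kernel-smoothing expansion — $f$ four times continuously differentiable at $x_0$, $K$ symmetric with $\mu_2<\infty$, and $\hat f_{k_j}(x_0)-\expn\hat f_{k_j}(x_0)=O_p((nk_j)^{-1/2})$ — gives $\hat f_{k_j}(x_0)=f(x_0)+\tfrac12 k_j^2f''(x_0)\mu_2+O_p\big(k_j^4+(nk_j)^{-1/2}\big)$ for $j=2,3$, whence $\widehat D-\tfrac12\tau(k_2^2-k_3^2)f''(x_0)\mu_2=O_p\big(\tau[k_2^4+k_3^4+(nk_2)^{-1/2}+(nk_3)^{-1/2}]\big)$. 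Evaluating the rescaled display at $z=y-\widehat D$ and passing from centring at $\widehat D$ to centring at $\tfrac12\tau(k_2^2-k_3^2)f''(x_0)\mu_2$ via $|\Phi(u/\sigma_{k_1})-\Phi(v/\sigma_{k_1})|\le(2\pi)^{-1/2}\sigma_{k_1}^{-1}|u-v|$ then produces exactly
\begin{multline*}
\prob\big(T^*_{b,1,\tau,k_1,k_2,k_3}\le y\,\big|\,X_1,\ldots,X_n\big)
=\Phi\!\left(\frac{y-\tau(k_2^2-k_3^2)f''(x_0)\mu_2/2}{\sigma_{k_1}}\right)\\
+\,O_p\Big((bk_1)^{-1/2}+\tau\big[k_2^4+k_3^4+(nk_2)^{-1/2}+(nk_3)^{-1/2}\big]\Big),
\end{multline*}
with $b\to\infty$ and $bk_1\to\infty$ used to render the Berry--Esseen term negligible.

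The hard part will be the probabilistic bookkeeping for the conditional moments: one must show that $\widehat V$, $\widehat V^{-1}$ and $\widehat\rho$ simultaneously attain the claimed orders on a \emph{single} high-probability event $\mathcal E_n$, so that the data-dependent Berry--Esseen constant $\widehat\rho/(\widehat V^{3/2}b^{1/2})$ is genuinely $O_p((bk_1)^{-1/2})$ and not merely finite for each $n$, and likewise that $(k_1\widehat V)^{1/2}$ stays bounded away from $0$ and $\infty$. Everything else is routine kernel bias/variance calculus plus Lipschitz continuity of $\Phi$, which is why the detailed argument is omitted.
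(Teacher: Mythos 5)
Your proposal is correct, and it is worth noting that the paper itself omits this proof entirely (``For brevity, proofs are omitted''), so there is no in-paper argument to match; your write-up actually supplies the missing details. Your route — reduce to a conditional i.i.d.\ sum of $b$ bounded draws, apply the classical Berry--Esseen bound with the data-dependent moments $\widehat V=O_p(k_1^{-1})$, $\widehat\rho=O_p(k_1^{-2})$, and then swap $(k_1\widehat V)^{1/2}$ for $\sigma_{k_1}$ and $\widehat D$ for $\tau(k_2^2-k_3^2)f''(x_0)\mu_2/2$ via Lipschitz continuity of $\Phi$ — is more elementary than the cumulant-expansion machinery (Lemmas~\ref{lem:bt.cum} and \ref{lem:bt.cum.expo}) that the paper uses for the dependent-data analogues, and it has the advantage of giving a genuinely uniform-in-$z$ distributional bound rather than only cumulant orders. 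The key bookkeeping steps all check out: with $\ell=1$ the conditional mean of $\hat f^*_{b,1,k}$ is exactly $\hat f_k(x_0)$; the Berry--Esseen ratio is $O_p(k_1^{-2}\cdot k_1^{3/2}\cdot b^{-1/2})=O_p((bk_1)^{-1/2})$ once $k_1\widehat V$ is bounded away from $0$ on a high-probability event; $|k_1\widehat V-\sigma_{k_1}^2|=O_p((nk_1)^{-1/2})=O_p((bk_1)^{-1/2})$ because $b=O(n)$; and the bias constant expansion uses exactly the assumed four-times differentiability of $f$ at $x_0$ and symmetry of $K$. The one point you flag as ``the hard part'' — putting $\widehat V$, $\widehat V^{-1}$ and $\widehat\rho$ on a single event $\mathcal E_n$ — is indeed the only place requiring care, but it follows routinely from the convergences in probability you establish, so the argument is complete as sketched.
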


\subsubsection{Choice of $(b,k_1)$ for EBC}
It follows 
from Theorems~\ref{thm:nonbt.iid} and \ref{thm:boot.iid} that the estimation error of the
block bootstrap distribution, namely
\[\prob\big(\hat{T}^*_{b,1,k_1}\le \cdot\,\big|X_1,\ldots,X_n\big)-\prob(T_h\le \cdot\,),\]
has an order
\[
O_p\Big\{(nh)^{-1/2}+n^{5/18}h^{5/2}+(bk_1)^{-1/2}+(k_1^2-h^2)\Big\},
\]
which has a minimum order
\begin{equation}
O_p\Big\{(nh)^{-1/2}+n^{5/18}h^{5/2}\Big\}
\label{boot.err.iid}
\end{equation}
if we set $k_1=h$ and $b\propto n$. The latter setting leads to Hall's (1992) explicit bias correction method.

\subsubsection{Choice of $b$ for NBC}
It follows 
from Theorems~\ref{thm:nonbt.iid} and \ref{thm:boot.iid} that 
\[\prob\big(\tilde{T}^*_{b,1}\le \cdot\,\big|X_1,\ldots,X_n\big)-\prob(T_h\le \cdot\,)\]
has an order
\[
O_p\big(b^{1/2}n^{-1/2}+n^{-1/10}b^{-2/5}h^{-1/2}\big),
\]
which has a minimum order
\begin{equation}
O_p\big\{(nh)^{-5/18}\big\},
\label{boot.err.iid.nocorr}
\end{equation}
if we set $b\propto n^{4/9}n^{-5/9}=o(n)$.
Note also that $k,k_3\propto(n/b)^{1/5}h$ has a bigger order than $h$.
 The method is therefore analogous to a $b$-out-of-$n$ bootstrap procedure 
coupled with oversmoothing.

\subsubsection{Choice of $(b,k)$ for UNS}
It follows 
from Theorems~\ref{thm:nonbt.iid}, \ref{thm:boot.iid} and the additional assumption $h=o(n^{-1/5})$ that 
\[\prob\big(\check{T}^*_{b,1,k}\le \cdot\,\big|X_1,\ldots,X_n\big)-\prob(T_h\le \cdot\,)\]
has an order
\[
O_p\big\{(bk)^{-1/2}+n^{1/2}h^{5/2}+(nh)^{-1/2}+(k^2-h^2)\big\},
\]
which has a minimum order
\begin{equation}
O_p\big\{n^{1/2}h^{5/2}+(nh)^{-1/2}\big\},
\label{boot.err.iid.undersm}
\end{equation}
if we set $k=h$ and $b\propto n$. This corresponds to Hall's (1992) undersmoothing method.

The following table summarises the optimal error rates under the i.i.d.\ assumption and different asymptotic regimes of $h$.

\vspace{2ex}\renewcommand{\arraystretch}{1.2}{\footnotesize
\begin{tabular}{|c|ccc|}
	\hline
	& \multicolumn{3}{c|}{Method} \\
	Range of $h$ & EBC & NBC & UNS \\ \hline
	$h\propto n^{-1/5}$ & 
	{\color{blue}$n^{5/18}h^{5/2}$} & {$(nh)^{-5/18}$} & {\color{red}(inconsistent)} \\ 
	$n^{-7/27}\preceq h\prec n^{-1/5}$ & 
	{\color{blue}$n^{5/18}h^{5/2}$} & {$(nh)^{-5/18}$} & {\color{red}$n^{1/2}h^{5/2}$} \\ 
	$n^{-7/25}\preceq h\preceq n^{-7/27}$ & 
	{\color{blue}$(nh)^{-1/2}$} & {$(nh)^{-5/18}$} & {\color{red}$n^{1/2}h^{5/2}$} \\ 
		$n^{-1/3}\preceq h\preceq n^{-7/25}$ & 
	{\color{blue}$(nh)^{-1/2}$} & {\color{red}$(nh)^{-5/18}$} & {$n^{1/2}h^{5/2}$} \\ 	
	$n^{-1}\prec h\preceq n^{-1/3}$ & 
{\color{blue}$(nh)^{-1/2}$} & {\color{red}$(nh)^{-5/18}$} & {\color{blue}$(nh)^{-1/2}$} \\ 	\hline
\multicolumn{4}{l}{\footnotesize Notation: $a_n\prec b_n$ iff $a_n=o(b_n)$, $a_n\preceq b_n$ iff $a_n=O(b_n)$.}\\
\multicolumn{4}{l}{\footnotesize  Best rate (blue), worst rate (red) under each $h$ regime.}
\end{tabular}
}

\section{Simulation Study}
\label{sec:simulation}

We provide illustration of some of the methodological conclusions of the preceding theory by considering the density estimation problem when the data sample $\{X_1, \ldots, X_n\}$ arises from the $ARMA(1,1)$ model
\[
X_{t}-\phi X_{(t-1)}=\epsilon_{t}+\theta \epsilon_{(t-1)},
\]
with the $\epsilon_{t}$ independent, identically distributed $N(0,1)$, and the coefficients fixed as $\phi=0.4, \theta=0.3$. This ARMA model satisfies the strong mixing condition with an exponential rate of decay for the mixing coefficients \citep[Example 6.1]{Lahiri:2003}. Throughout our simulations, the value of $X_{0}$ was sampled randomly from the marginal distribution $N(0, 1+\frac{(\theta+\phi)^{2}}{1-\phi^{2}})$, and $\epsilon_{0}$ is sampled from $N(0,1)$. The true density $f(\cdot)$ being estimated is therefore that of a normal distribution $N(0,1.5833)$. We consider estimation of $P(x_0,y) \equiv \prob\Big((nh)^{1/2}\big\{\hat{f}_h(x_0)-f(x_0)\big\}\le y\Big)$. Throughout, we consider density estimation based on the Epanechnikov kernel function $K(u)=\frac{3}{4}(1-u^2), |u| \le 1$. All bootstrap estimators are based on drawing 10,000 block bootstrap samples from a given sample, and accuracy is measured in terms of mean squared error in estimation of the true value of the probability of interest, over a series of 10,000 replications from the $ARMA(1,1)$ model.

We consider the three block bootstrap estimators of $P(x_0,y)$ discussed previously, defined 
in terms
\[
\prob \big(\hat T^*_{b,\ell,\tau, k_1, k_2, k_3} \le y\big| X_1,\ldots,X_n\big).
\]
We denote by $NBC$ the estimator without explicit bias correction: recall this is defined by setting $k_1=k_2=k, \tau=(b\ell k)^{1/2}$. We fix the value of $c_0$ as $c_0=0.5$, then fix $k$ and $k_3$ as specified by \eqref{bias.cond.nocorr}. This estimator $NBC$ then requires only specification of $(b, \ell)$. We denote by $EBC$ the estimator which makes an explicit bias correction, where we specify $k_2=c_2 n^{-1/9}$, $k_3=c_0k_2$, $\tau=(1-c_0^2)^{-1}n^{1/2}h^{5/2}k_2^{-2}$, with, as before, $c_0=0.5$. This estimator requires specification, in addition to $(b, \ell)$, of the two tuning constants, $k_1$ and $c_2$, which defines $k_2$. The third estimator, which avoids the explicit bias correction by undersmoothing, we denote by $UNS$, and corresponds to setting $\tau=0$ in the definition of $EBC$, leaving an estimator depending on just the one tuning constant $k_1$ in addition to $(b, \ell)$.

We consider first sample size $n=100$, and estimation for $x_0=1.0, y=0.15$. Two values of the kernel estimator bandwidth $h$ are considered, $h=0.625, 1.80$. The former value corresponds to the bandwidth for which, by Theorem~\ref{thm:nonbt.expo}, $P(x_0,y)$ is most accurately estimated by the normal approximation
\[
\Phi\left(\dfrac{y-n^{1/2}h^{5/2}f''(x_0)\mu_2/2}{\sqrt{f(x_0)\nu_2}}\right).
\]
Note that this latter quantity is, in practice, unavailable since $f(x_0)$ is unknown, and the object of the density estimation. Asymptotically, the bandwidth $h$ specified in this way is of order $n^{-1/3}$. The value $h=1.80$ is the bandwidth which minimises the mean squared error of estimation, $E\{\hat f_h(x_0)-f(x_0)\}^2$. Asymptotically, the bandwidth minimising the mean squared error of density estimation is of order $n^{-1/5}$.  Note that the bias of the estimator $\hat f(x_0)$ of $f(x_0)$ increases with $h$, and that for the case $n=100$ considered here bias is greater, by a factor around 10, at $h=1.80$, compared to $h=0.625$, where it is negligible. In detail, bias is -0.0021159 at $h=0.625$, and -0.0179130 at $h=1.80$. An important methodological alert of the theory presented in Section~\ref{sec:theory} is that the estimator $UNS$ is inconsistent if the density estimator bandwidth $h$ is of order $n^{-1/5}$, but ought to work well, with similar asymptotics to $EBC$, if the density estimator is undersmoothed, in particular in the case $h$ is of order $n^{-1/3}$.

The true values being estimated are $P(x_0,y)=0.673043, 0.884243$, respectively for $h=0.625, 1.80$.

Figure~\ref{fig:nbc.arma} shows the mean squared error of the estimator $NBC$ of $P(x_0,y)$, as a function of $(b,\ell)$, while Figure~\ref{fig:uns.arma} shows the corresponding dependence for the estimator $UNS$, for bandwidth $h=0.625$. Note that the latter estimator is, in this example, much more accurate than $NBC$, as indicated by asymptotic theory, but, as noted, requires specification of the tuning constant $k_1$, which has been optimised over, using a grid search, for each $(b,\ell)$ combination shown in construction of the figure. The theory presented in Section~\ref{sec:theory} suggests that for optimality of the bootstrap estimator $NBC$ the block bootstrap sample size $b\ell$ should have smaller order than the sample size $n$. This conclusion incorporates the case $b=1$ seen to give smallest mean squared error in this example. For the bandwidth $h=0.625$ considered here, where the bias of the density estimator is small, the estimator $EBC$, optimised over the two tuning constants $k_1$ and $c_2$, yields accuracy figures indistinguishable from those of the simpler estimator $UNS$. Again in accordance with theory, for a given block length $\ell$, accuracy of the estimators $UNS$ and $EBC$ is improved by increasing the number of blocks $b$: overall optimality in terms of mean squared error of estimation is achieved by $(b,\ell)=(33,3)$. Increasing the value of the bandwidth $h$ to its optimal value for estimation of $f(x_0)$, $h=1.80$, allows for substantial reduction in error through explicit bias correction. Table~\ref{table:mse.n100} provides illustrative comparisons, for several combinations of $(b,\ell)$. In each case, the estimators $UNS$ and $EBC$ have been optimised over their respective tuning constants. Accuracy is seen to be very sensitive to values of these constants, as we will discuss further below. The estimator $NBC$ is quite inaccurate relative to the estimators $UNS$ and $EBC$, at least when the tuning constants required by the latter are fixed at close to optimal values.

\begin{figure}[ht]
\includegraphics[width=6in ] {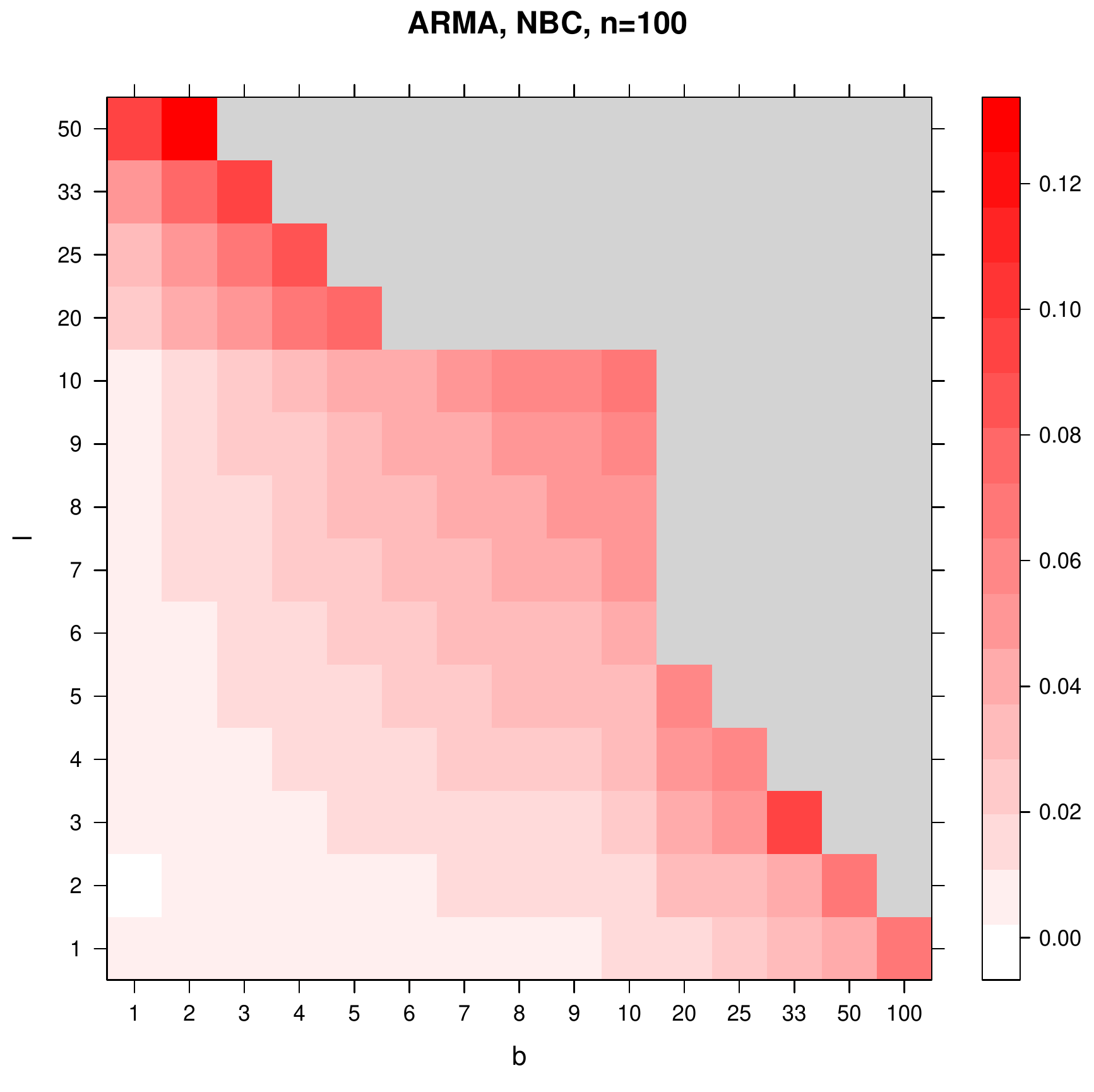}
\caption{\label{fig:nbc.arma} MSE of estimator $NBC$ for $ARMA$ example, $n=100$, as a function of $(b,\ell)$.}
\end{figure}
\begin{figure}[ht]
\includegraphics[width=6in ] {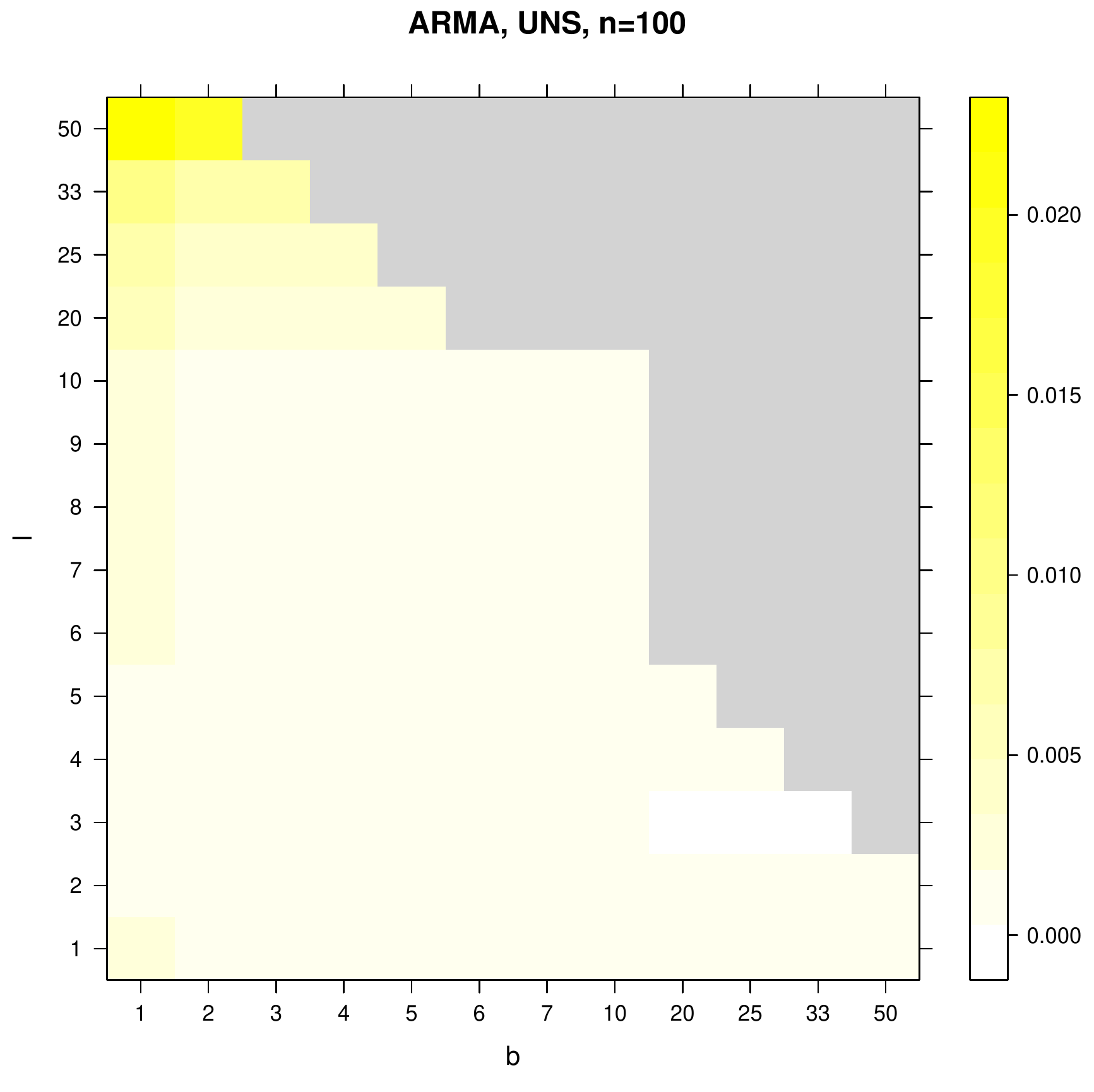}
\caption{\label{fig:uns.arma} MSE of estimator $UNS$ for $ARMA$ example, $n=100$, as a function of $(b,\ell)$.}
\end{figure}

\begin{table}[ht]
\renewcommand\thetable{1}
\caption{\label{table:mse.n100} MSEs of estimators $NBC$, $UNS$ and $EBC$ for $ARMA$ example, $n=100$, bandwidths $h=0.625, 1.80$.  }
\begin{center}
\begin{tabular}{c|rr|rrr|}
$(b,\ell)$ & \multicolumn{2}{c|}{$h=0.625$}&\multicolumn{3}{c|}{$h=1.80$} \\
&&&&& \\
&$NBC$& $UNS$/$EBC$ &$NBC$& $UNS$& $EBC$\\
$(1,2)$ &0.01136&0.00113&0.03593&0.00991&0.00805 \\
$(1,5)$&0.00827&0.00166& 0.03083&0.00808&0.00603 \\
$(1,10)$&0.00991&0.00294&0.02742&0.00796&0.00639 \\
$(5,2)$&0.00804&0.00046& 0.01767&0.00261&0.00133 \\
$(5,5)$&0.01576&0.00040&0.02357&0.00340&0.00241 \\
$(5,10)$&0.02811&0.00093&0.03385& 0.00408&0.00329 \\
$(10,2)$&0.01148&0.00040&0.01986& 0.00201&0.00115 \\
$(10,5)$ &0.02460&0.00038&0.03117& 0.00295&0.00222 \\
$(10,10)$&0.04703&0.00093& 0.04985&0.00373&0.00310
\end{tabular}
\end{center}
\end{table}

\begin{table}[ht]
\caption{\label{table:mse.n200} MSEs of estimators $NBC$, $UNS$ and $EBC$ for $ARMA$ example, $n=200$, bandwidths $h=0.82, 0.93$. }
\begin{center}
\begin{tabular}{c|rrr|rrr|}
$(b,\ell)$ & \multicolumn{3}{c|}{$h=0.82$}&\multicolumn{3}{c|}{$h=0.93$} \\
&&&&&& \\
&$NBC$& $UNS$& $EBC$ &$NBC$& $UNS$& $EBC$\\
&&&& &&\\
$(1,10)$ &0.00592&0.00171&0.00140&0.00639&0.00522& 0.00143\\
$(5,5)$ &0.00712&0.00082&0.00016 &0.00776&0.00529&0.00019\\
$(5,10)$&0.01384&0.00078&0.00040 &0.01492&0.00454&0.00028\\
$(10,5)$& 0.01390&0.00078&0.00014&0.01504&0.00517& 0.00017\\
$(20,5)$&0.02634&0.00073&0.00014&0.02848&0.00499&0.00018 \\
$(20,10)$&0.05033&0.00074&0.00040&0.05302&0.00434&0.00023 \\
$(40,5)$ &0.04994&0.00070&0.00015&0.05250&0.00488&0.00015 \\
$(50,4)$ &0.05038&0.00086&0.00010&0.05363&0.00546&0.00014
\end{tabular}
\end{center}
\end{table}

We consider now the larger sample size $n=200$, and estimation of $P(3.0,0.1)$. Two bandwidths are considered, $h=0.93$, for which $P(3.0,0.1)=0.632329$ and $h=0.82$,
for which $P(3.0,0.1)=0.680093$. The former bandwidth $h$ is that for which the normal approximation is most accurate, while the latter corresponds to the bandwidth which minimises the mean squared error of the density estimator: note that for this sample size $n=200$, this is smaller than the $h$ optimal for the normal approximation, a reversal of the asymptotic ordering and that seen for the $n=100$ case above.

Mean squared errors of the three estimators for several $(b, \ell)$ combinations are given in Table~\ref{table:mse.n200}. As before, figures for $UNS$ are those optimised over the tuning constant $k_1$, while those for $EBC$ are optimised over the two tuning constants $k_1$ and $c_2$, with the estimator constructed with $k_2=c_2n^{-1/9}$. 

In this context, bias of the kernel estimator $\hat f(x_0)$  is 0.0036446 at $h=0.82$, and 0.0047002 at $h=0.93$, and the figures confirm the advantage of the explicit bias correction estimator $EBC$ over the undersmoothing estimator $UNS$. Mean squared error figures for the estimator $NBC$ are again noticeably poor.

The results in Table~\ref{table:mse.n200} also suggest that if one is concerned about not so much the accuracy of the density estimator as the accuracy of the inference statement (e.g. confidence level, significance level) made out of that estimator, then a smaller bandwidth $h$ might be preferable to a bigger one. A downside, however, could be a drop in efficiency (leading to, e.g. a long confidence interval).

Figure~\ref{fig:uns.arma.n200} shows the acute dependence of the mean squared error of the estimator $UNS$ on the tuning constant $k_1$, for the case $(b, \ell)=(50,4), h=0.82$, and is typical of other cases studied. Effective practical implementation of the estimator will require efficient methods for empirical choice of $k_1$, as well as determination of the number of blocks $b$ and their length $\ell$ used in the construction. Figure~\ref{fig:ebc.arma.n200} shows, for the same case $(b,\ell)=(50,4), h=0.82$ dependence of the estimator $EBC$ on the two tuning constants $k_1, c_2$. Similar conclusions may be drawn about the need for effective setting of these values.

One further practical point indicated by the theory is worth mention. In principle, the theoretically optimal orders of the tuning constants $(k_1, k_2)$ for a fixed combination $(b, \ell)$ are only functions of $(n, b, \ell)$ and not the bandwidth $h$. Empirical investigations show, however, that in a finite sample situation, such as that illustrated in Figure~\ref{fig:ebc.arma.n200}, the constants $(k_1, k_2)$ which minimise the mean squared error of estimation of $P(x_0, y)$ may depend quite considerably on the bandwidth $h$ specified, leading to the conclusion that optimal tuning of the estimators $UNS$ and $EBC$ will, in practice, require also consideration of this value.

\begin{figure}
\includegraphics[width=6in ] {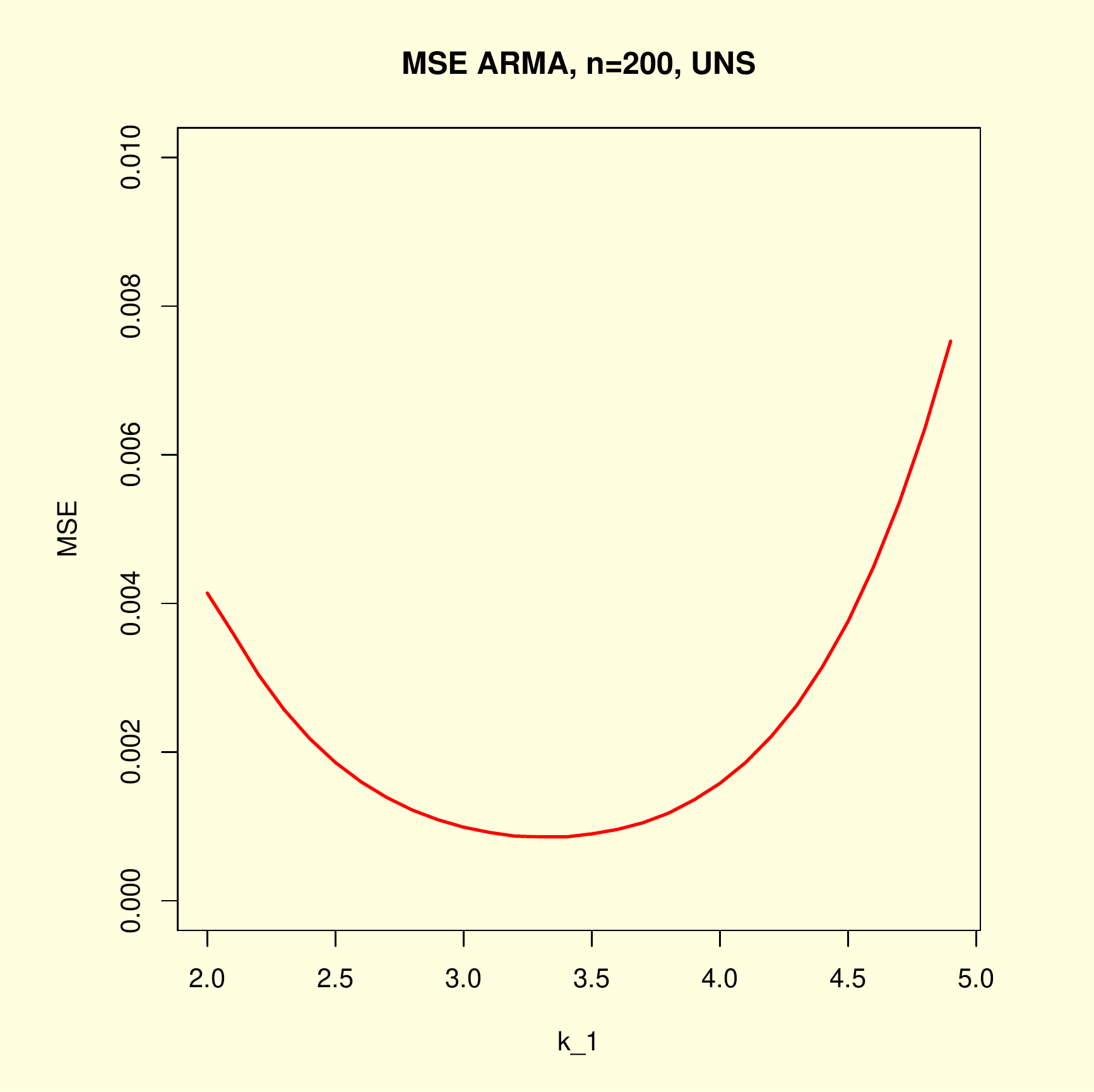}
\caption{\label{fig:uns.arma.n200} MSE of estimator $UNS$ for $ARMA$ example, $n=200$, as a function of tuning constant $k_1$, for $(b, \ell)=(50,4)$.}
\end{figure}
\begin{figure}
\includegraphics[width=6in ] {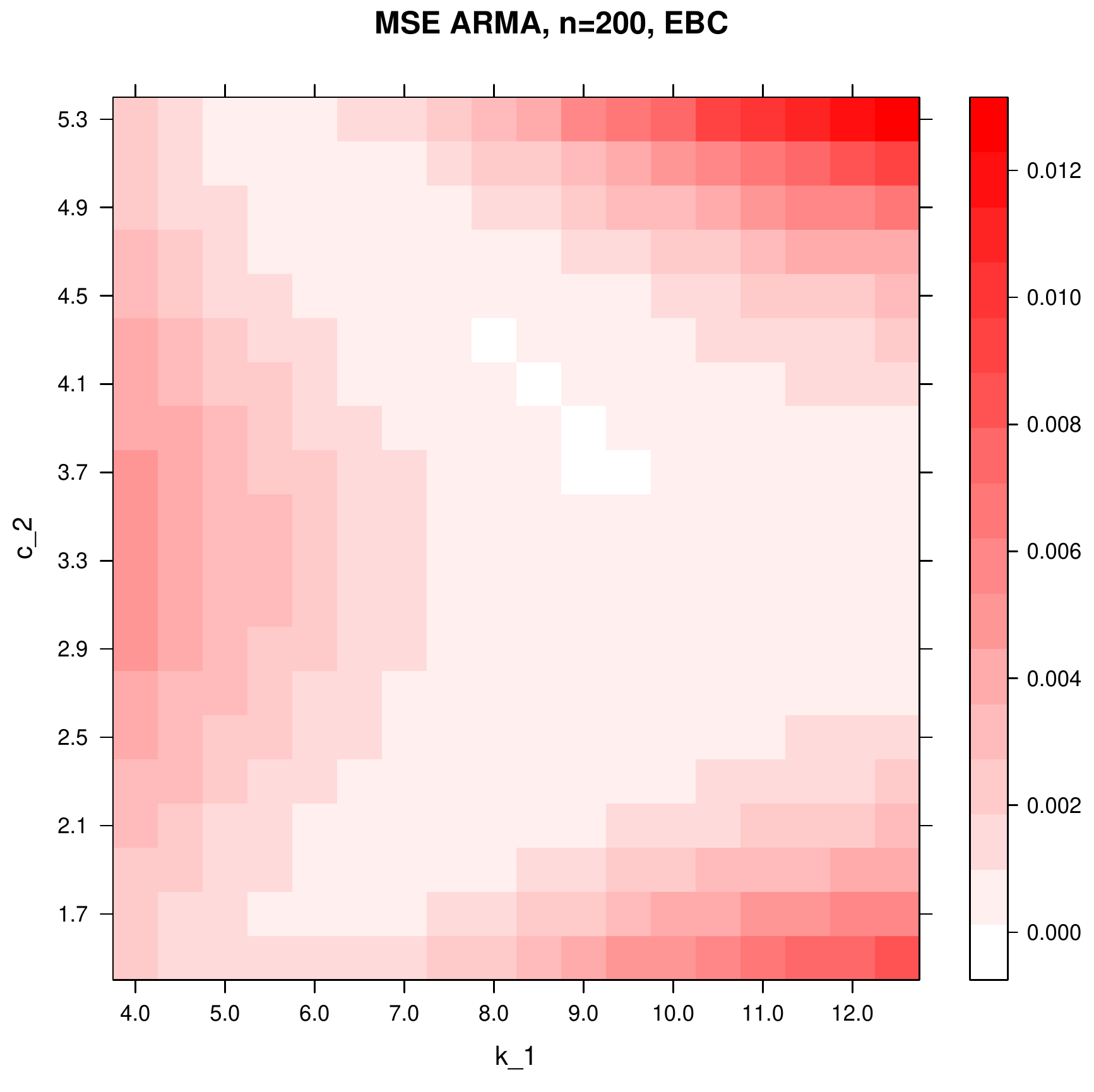}
\caption{\label{fig:ebc.arma.n200}
MSE of estimator $EBC$ for $ARMA$ example, $n=200$, as a function of tuning constants $k_1, c_2$, for $(b, \ell)=(50,4)$.}
\end{figure}

\section{Conclusions}
\label{sec:conclusion}
Many applications in statistics involve both dependent data and nonparametric kernel-based estimators, which are non-smooth (i.e., non-regular) functionals. Examples in time series analysis include density estimation, conditional mean estimation, prediction and local regression \citep{HLC:1997}.  This combination of dependence and a non-smooth statistical functional comprises a setting for which nonparametric bootstrap methodology is still poorly understood. It is important to know how to optimize the bootstrap procedure to achieve the fastest possible convergence rate. The development of such an optimality theory is relevant for scalability and computational efficiency of block bootstrap procedures, and also for consistent estimation of bias, optimal bandwidth selection, and coverage accuracy of block bootstrap confidence intervals.

In this paper, we have presented a general theory of optimality of the block bootstrap for estimation of the sampling distribution of a kernel density estimator with dependent observations arising from a strong mixing process. Optimality means achieving the fastest possible convergence rate of the block bootstrap estimator, which corresponds to minimizing the order of magnitude of estimation error of the block bootstrap estimator of the sampling distribution of the kernel density estimator. We have described
a unified framework for a theoretical study of a rich class of bootstrap methods for this problem which includes as special cases subsampling, K\"{unsch}'s moving block bootstrap, Hall's under-smoothing, as well as approaches incorporating no or explicit bias correction. Accuracy under a broad spectrum of choices of the bandwidth $h$, which include as an important special case the MSE-optimal choice, as well as other under-smoothed choices, has been analysed. Under each choice of $h$, we have detailed the optimal tuning parameters and compared optimal performances between the main subclasses (EBC, NBC, UNS) of the bootstrap methods.

Our theoretical results in this setting are the first of their kind. Previous work concerning block bootstrap methods for smooth functionals prescribed different fixed rules for the number of blocks $b$, whereas the block length $\ell$ was chosen to achieve the fastest convergence rate. By contrast, we also allow the number of blocks $b$ to be chosen optimally; doing so allows for a faster convergence rate quite generally. Examples of conventional fixed-rule $b$ regimes are the subsampling bootstrap for which $b=1$ and the moving block bootstrap for which $b=\lfloor n/\ell \rfloor$.
Our results show that the optimal convergence rate is achieved by simultaneously choosing the values of the pair $(b,\ell)$, and explain when the optimal choice of $b$ is $b=1$ (subsampling bootstrap), or $b=\lfloor n/\ell \rfloor$, or some value in-between these two extremes. Since for any intermediate value of $b$, this block bootstrap does not correspond to either of these two extremes, but can be interpreted as a compromise between them, we have called this new method the hybrid block bootstrap. The optimal choice of the pair $(b,\ell)$ depends on the bandwidth for the kernel density estimator, the rate of the decay of the mixing coefficients of the underlying stochastic process, the type or absence of studentization, and also whether or not an explicit bias correction is used. Compared to the sample quantile problem, the optimality theory we have established for the kernel density problem implies very different schemes for selecting $(b,\ell)$. The results for these two non-smooth functionals are dramatically different, and whether one performs a separate bias correction has enormous impact on the best convergence rates. Generally, standard MBB (with $n \approx b\ell$) can be tuned to work optimally, but subsampling cannot, if one does a separate bias correction. Without separate bias correction, a hybrid between standard MBB and subsampling is more favourable.

Recently, \citet{GLN:2015,GLN:2018} have proposed a new smoothed block bootstrap method showing promising results for the quantile and quantile regression problems. This method does not fall as a special case within the framework of our hybrid block bootstrap approach. These authors establish only consistency for several statistical functionals. A study of rates of convergence for the density estimation problem and comparison with the hybrid schemes considered here seems worthwhile future work. 


\appendix

\section{Proofs of Main Results and Auxiliary Lemmas}

In what follows we denote by $C^*$ a generic positive constant independent of $n$, and
by $\{L_n\}$ a generic positive, slowly varying, sequence converging to 0.
Define, for $h>0$ and $d\ge 1$, $\kappa_{n,d,h}$ to be the $d$-th cumulant of
$\hat{f}_h(x_0)$, and
\begin{align*}
&\mu_{n,d,h}= \\
&\expn\bigg[\left\{\dfrac{1}{h}K\left(\dfrac{X_0-x_0}{h}\right)-
\kappa_{n,1,h}\right\}
\bigg(\sum_{|t|\le n-1}\left\{\dfrac{1}{h}K\left(\dfrac{X_t-x_0}{h}\right)-
\kappa_{n,1,h}\right\}\bigg)^{d-1}\bigg].
\end{align*}
Note that $\mu_{n,1,h}\equiv 0$.

\begin{lem}
\label{lem:cum-mom}
Suppose that $\alpha(t)=O(t^{-\beta})$ as $t\rightarrow\infty$, for some $\beta>2$, and that
$nh\rightarrow\infty$ and $h=o(n^{-\epsilon})$ for some $\epsilon\in(0,1)$. Then, for any arbitrarily small $\delta>0$,
\begin{equation}
\kappa_{n,d,h}=
\begin{cases}
f(x_0)+h^{2}f''(x_0)\mu_2/2+O(h^4),& d=1,\\
(nh)^{-1}f(x_0)\nu_2+O\big(n^{-1}h^{-2/\beta}\big),& d=2,\\
O\big\{(nh)^{-d+1}+n^{-\beta+\delta}h^{-d}+n^{-d+1+\delta}
h^{-d\gamma_0(\beta,d)}\big\},&d\ge 3;
\end{cases}
\label{nonbt.cum}
\end{equation}
and
\begin{equation}
\mu_{n,d,h}=
\begin{cases}
O(h^{-1}),&d=2,\\
O\big(h^{-2}+n^{2-\beta}h^{-3}L_n^{-1}+n^\delta h^{-3\gamma_0(\beta,3)}\big),&d=3,\\
O\big(nh^{-2}+n^{3-\beta+\delta}h^{-4}+n^{\delta}
h^{-4\gamma_0(\beta,4)}\big),&d=4,\\
O\big(n^{d-\beta-1+\delta}h^{-d}+n^{(d-2)/2+\delta} h^{-d\max\{\gamma_0(\beta,d),1/2\}}\big),&d\ge 5.
\end{cases}
\label{nonbt.mom}
\end{equation}
\end{lem}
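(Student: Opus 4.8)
The plan is to reduce everything to cumulant and product-moment estimates for the centred summands $Y_i\triangleq h^{-1}K\big((X_i-x_0)/h\big)-\kappa_{n,1,h}$, which satisfy $\expn Y_i=0$ and $\sum_{i=1}^nY_i=n\big(\hat f_h(x_0)-\kappa_{n,1,h}\big)$, so that $\kappa_{n,d,h}=n^{-d}\,\mathrm{cum}_d\big(\sum_{i=1}^nY_i\big)$ for $d\ge 2$, while $\mu_{n,d,h}=\expn\big[Y_0\,W^{d-1}\big]$ with $W=\sum_{|t|\le n-1}Y_t$ (and $\mu_{n,1,h}=\expn Y_0=0$). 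The case $d=1$ is immediate from the substitution $v=(u-x_0)/h$, which gives $\kappa_{n,1,h}=\int K(v)f(x_0+hv)\,dv$, followed by a fourth-order Taylor expansion of $f$ at $x_0$ and the moment identities $\int K=1$, $\int vK=0$, $\int v^2K=\mu_2$. The same substitution, together with boundedness of the marginal and joint densities, yields the three elementary bounds on which all of the rest rests: $\|Y_0\|_\infty=O(h^{-1})$; $\expn|Y_0|^p=O(h^{-p+1})$ for every $p\ge 1$; and, more generally, $\expn\prod_{j=1}^m|Y_{i_j}|=O\big(h^{-(m-r)}\big)$ whenever the indices $i_1,\dots,i_m$ take exactly $r$ distinct values --- in particular $O(1)$ for distinct indices and the ``diagonal'' size $O(h^{-m+1})$ when they all coincide.

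For $d\ge 2$ I would expand $\mathrm{cum}_d\big(\sum_iY_i\big)=\sum_{i_1,\dots,i_d}\mathrm{cum}(Y_{i_1},\dots,Y_{i_d})$ and, by stationarity, reduce it up to translation-count weights $n\big(1+O(|t|/n)\big)$ to $n$ times a sum of $\mathrm{cum}(Y_0,Y_{t_2},\dots,Y_{t_d})$ over a lattice box of side $\le 2n-1$. Two complementary estimates control the summand. The \emph{local} estimate: when the points $\{0,t_2,\dots,t_d\}$ lie in a window of length $L$ (there are $O(L^{d-1})$ such tuples), the moment--cumulant formula together with the moment bounds above gives $|\mathrm{cum}(Y_0,Y_{t_2},\dots,Y_{t_d})|=O(h^{-d+1})$ in the worst case, the fully diagonal tuple alone contributing $n^{-d}\cdot n\cdot\mathrm{cum}_d(Y_0)=O\big((nh)^{-d+1}\big)$, the first listed term. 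The \emph{mixing} estimate: for the sorted points with maximal gap $g$, a standard bound on mixed cumulants of strongly mixing sequences, applied through H\"older with an exponent $p>d$ and $\|Y_0\|_p=O(h^{-1+1/p})$, gives $|\mathrm{cum}(Y_0,Y_{t_2},\dots,Y_{t_d})|\le C_d\,\alpha(g)^{1-d/p}h^{-d(1-1/p)}\le C_d\,g^{-\beta(1-d/p)}h^{-d(1-1/p)}$. I would then split the box according to the clustering pattern of $\{0,t_2,\dots,t_d\}$ and the size of its maximal gap, bound clustered tuples by the local estimate and spread-out ones by the mixing estimate, and optimise jointly over the window length $L\asymp h^{-a}$, the gap-truncation level, and the exponent $p$; the optimisation over $p$ has a nontrivial finite minimiser only when $\beta>d-1$, in which case it drives $\gamma_0(\beta,d)$ strictly below $1$, which is exactly the origin of the term $n^{-d+1+\delta}h^{-d\gamma_0(\beta,d)}$ and of the indicator $\pmb{1}\{\beta>d-1\}$. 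The crude contribution of all tuples possessing some gap $\ge n^{1-\epsilon}$, bounded by $h^{-d}\alpha(n^{1-\epsilon})$ times the tuple count, yields the term $n^{-\beta+\delta}h^{-d}$, with the arbitrarily small $\delta$ absorbing the $\log(1/h)\lesssim\log n$ losses from the optimisation (replaced by a slowly varying $L_n^{-1}$ in borderline cases such as the $d=3$ line). For $d=2$ this reduces to Davydov's inequality $|\mathrm{Cov}(Y_0,Y_t)|\le Ch^{-2}\alpha(t)$ balanced against the $O(1)$ local bound: splitting at $t\asymp h^{-2/\beta}$ gives $\sum_{t\ne 0}|\mathrm{Cov}(Y_0,Y_t)|=O(h^{-2/\beta})$, so $\kappa_{n,2,h}=n^{-1}\mathrm{Var}(Y_0)+O(n^{-1}h^{-2/\beta})=(nh)^{-1}f(x_0)\nu_2+O(n^{-1}h^{-2/\beta})$ after computing $\mathrm{Var}(Y_0)=h^{-1}f(x_0)\nu_2+O(1)$.

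For $\mu_{n,d,h}=\expn\big[Y_0\,W^{d-1}\big]$ I would expand the $(d-1)$-st power and apply the moment--cumulant formula to each $\expn[Y_0Y_{t_1}\cdots Y_{t_{d-1}}]$: since $\expn Y_0=0$, every partition of $\{0,t_1,\dots,t_{d-1}\}$ containing a singleton block vanishes, so only partitions into blocks of size $\ge 2$ survive. Summing each block's free time indices over $|\cdot|\le n-1$ by the estimates of the previous paragraph --- a size-$2$ block not containing the index $0$ contributes $O(nh^{-1})$ (since $\sum_u|\mathrm{Cov}(Y_0,Y_u)|=O(h^{-1})$), the size-$2$ block containing $0$ contributes $O(h^{-1})$, and blocks of size $\ge 3$ are smaller --- and collecting the dominant partitions (all pairs when $d$ is even, one triple plus pairs when $d$ is odd, and the fully connected block) reproduces the leading orders $O(h^{-1})$ for $d=2$, $O(h^{-2})$ for $d=3$, $O(nh^{-2})$ for $d=4$, and $O\big(n^{(d-2)/2}h^{-d/2}\big)$ for $d\ge 5$ (the last coming, for even $d$, from the all-pairs partition); the partially or fully connected partitions contribute the $n^{-\beta}h^{-d}$-type and $h^{-d\gamma_0}$-type pieces exactly as above, and the $\max\{\gamma_0(\beta,d),1/2\}$ in the $d\ge 5$ line records whether the connected-cumulant bound or the disconnected pairing bound dominates.

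The main obstacle is the bookkeeping in the $d\ge 3$ cumulant sum: organising the lattice box into the right family of clustering patterns so that on each pattern either the local or the mixing bound is applied to optimal effect, and then executing the three-parameter constrained optimisation so that the exponents land precisely on $\gamma_0(\beta,d)$ and the threshold $\beta>d-1$. Once that decomposition is fixed, propagating the arbitrarily small $\delta$ (and the slowly varying factors at the parameter boundaries) uniformly in $d$, and checking that the combinatorial constants $C_d$ do not enter the exponents, is routine.
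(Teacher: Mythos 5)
Your treatment of $d=1$ and $d=2$ matches the paper's (Taylor expansion for the mean; a Davydov-type covariance bound split at lag $\asymp h^{-2/\beta}$ for the variance), and your partition-expansion plan for the moments $\mu_{n,d,h}$ is a workable variant of the paper's recursion $\mu_{n,d,h}=O\big(n^{d-1}\kappa_{n,d,h}+n\sum_{d_1}|\mu_{n,d_1,h}\mu_{n,d-d_1,h}|\big)$. The genuine gap is in the $d\ge 3$ cumulant bound, which is the heart of the lemma. You propose a \emph{single-scale} cluster decomposition: one window length $L\asymp h^{-a}$, one gap-truncation level, one H\"older exponent $p$, optimised jointly. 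The paper instead runs a \emph{multi-scale, self-improving} blocking recursion: a cascade $n=n_0\gg n_1\gg\cdots\gg n_S$ with
\[
n_s^{d-1}\kappa_{n_s,d,h}=O\big(n_{s+1}^{d-1}\kappa_{n_{s+1},d,h}+h^{-d}n_s^{d-2}n_{s+1}^{-\beta+1}\big),
\]
in which the bound already obtained at scale $n_{s+1}$ is fed back in to choose $n_{s+1}$ optimally and improve the bound at scale $n_s$, and the exponents $(\alpha_s,\gamma_s)$ are tracked through the recursion $\alpha_s=(d-2)\alpha_{s+1}/(\alpha_{s+1}+\beta-1)$, $\gamma_s=\{\gamma_{s+1}(\beta-1)+\alpha_{s+1}\}/(\alpha_{s+1}+\beta-1)$ down from $(\alpha_S,\gamma_S)=(d-1,0)$ with $S$ large.

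The exponent $\gamma_0(\beta,d)$ is the limit of that iteration, and its form is the fingerprint of the cascade: the power $(\beta-1)/(\beta-d+1)$ appearing in its definition equals $\sum_{s\ge 0}\{(d-2)/(\beta-1)\}^s$, a geometric series that converges precisely when $\beta>d-1$ --- which is exactly the indicator $\pmb{1}\{\beta>d-1\}$ in the definition. A one-shot optimisation over $(L,\text{gap},p)$ produces exponents obtained by balancing finitely many power terms, i.e.\ (at best) one iteration of the above map; it cannot reach the infinite-iteration limit, so it lands on a strictly weaker (larger) exponent than $\gamma_0(\beta,d)$. You flag the "three-parameter constrained optimisation so that the exponents land precisely on $\gamma_0(\beta,d)$" as bookkeeping to be executed, but it is not bookkeeping: with only one scale the optimisation provably does not land there. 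Since the $d\ge3$ cumulant bound also feeds your partition expansion for $\mu_{n,d,h}$ (through the connected and partially connected blocks), the deficit propagates to \eqref{nonbt.mom} as well. To repair the argument you would need to iterate your decomposition across a hierarchy of scales --- at which point you have reconstructed the paper's proof.
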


\begin{proof}[Proof of Lemma~\ref{lem:cum-mom}]
The case $d=1$ for (\ref{nonbt.cum}) follows readily by noting
\[ \kappa_{n,1,h}=\dfrac{1}{h}\int K\left(\dfrac{y-x_0}{h}\right)f(y)\,dy
=\int K(u)f(x_0+hu)\,du\]
and Taylor expansion of $f$ about $x_0$.

Consider next the case $d=2$. Setting $m_n=[h^{-2/\beta}]$, the integer part of $h^{-2/\beta}$, we have
\begin{align*}
\kappa_{n,2,h}&=n^{-1}h^{-2}{\rm Var}\big(K\left((X_0-x_0)/h\right)\big)\\
& +(nh)^{-2}\hspace{-2ex}
\sum_{1\le|t|\le m_n-1}\hspace{-2ex}(n-|t|)\text{Cov}\big(K\left((X_0-x_0)/h\right),K\left((X_t-x_0)/h\right)\big)\\
& +(nh)^{-2}\hspace{-2ex}
\sum_{m_n\le|t|\le n-1}\hspace{-2ex}(n-|t|)\text{Cov}\big(K\left((X_0-x_0)/h\right),K\left((X_t-x_0)/h\right)\big)\\
&=(nh)^{-1}f(x_0)\nu_2\big\{1+O(h^2)\big\}+O\big(n^{-1}m_n+n^{-1}h^{-2}m_n^{-\beta+1}\big)\\
&=(nh)^{-1}f(x_0)\nu_2+O\big(n^{-1}h^{-2/\beta}\big)
\end{align*}
and that
\begin{align*}
n^{-1}\mu_{n,2,h}&=\kappa_{n,2,h}+(nh)^{-2}\hspace{-1ex}
\sum_{|t|\le n-1}\hspace{-1ex}|t|\text{Cov}\big(K\left((X_0-x_0)/h\right),K\left((X_t-x_0)/h\right)\big)\\
&= O\big\{(nh)^{-1}\big\},
\end{align*}
which proves (\ref{nonbt.cum}) and (\ref{nonbt.mom}) for $d=2$.

Let $\{n_s\}_{0\le s\le S}$ be an arbitrary sequence of positive integers  satisfying $n=n_0$, $n_S\rightarrow\infty$ and $n_{s+1}=o(n_s)$
for $s=0,1,\ldots,S-1$.
Invoking mixing properties of $\{Y_t\}$, we have, for $d\ge 3$,
\[n_s^{d-1}\kappa_{n_s,d,h}=O\left(n_{s+1}^{d-1}\kappa_{n_{s+1},d,h}+h^{-d}n_s^{d-2}n_{s+1}^{-\beta+1}\right).\]
Suppose that $n_r^{d-1}\kappa_{n_r,d,h}=O\big(n_r^{\alpha_r}h^{-d\gamma_r}+n^{d-\beta-1}h^{-d}L_n^{-1}+h^{1-d}\big)$
for some $(\alpha_r,\gamma_r)$ and for $r=s+1,\ldots,S$.
Then we have, by setting 
\[
n_{s+1}\propto\min\left\{n_s^{\frac{d-2}{\alpha_{s+1}+\beta-1}}h^{\frac{d(\gamma_{s+1}-1)}{\alpha_{s+1}+\beta-1}},
n_sL_n\right\},
\]
that
\[ n_s^{d-1}\kappa_{n_s,d,h}=O\left(n_s^{\frac{(d-2)\alpha_{s+1}}{\alpha_{s+1}+\beta-1}}h^{-d\left\{
\frac{\gamma_{s+1}(\beta-1)+\alpha_{s+1}}{\alpha_{s+1}+\beta-1}\right\}}+n^{d-\beta-1}h^{-d}L_n^{-1}+h^{1-d}\right).\]
It follows by induction that
\[
n_s^{d-1}\kappa_{n_s,d,h}=O\big(n_s^{\alpha_s}h^{-d\gamma_s}+n^{d-\beta-1}h^{-d}L_n^{-1}+h^{1-d}\big)
\]
for all $s=0,1,\ldots,S$, where
$(\alpha_s,\gamma_s)$ satisfy the recursive relations
\[ \alpha_s=\frac{(d-2)\alpha_{s+1}}{\alpha_{s+1}+\beta-1}\qquad\text{and}\qquad
\gamma_s=\frac{\gamma_{s+1}(\beta-1)+\alpha_{s+1}}{\alpha_{s+1}+\beta-1},
\]
for $s=0,1,\ldots,S-1$.
Noting the general fact that
$n_S^{d-1}\kappa_{n_S,d,h}=O\big(\mu_{n_S,d,h}\big)=O\big(n_S^{d-1}+h^{1-d}\big)$, we may set
$(\alpha_S,\gamma_S)=(d-1,0)$ and obtain from the recursive relations that
\[ \alpha_0=\max\{d-\beta-1,0\}+\delta_1
\;\;\text{and}\;\;\gamma_0=\gamma_0(\beta,d)-\delta_2, \]
for some $\delta_1,\delta_2\in(0,\delta)$, provided that $S$ is sufficiently large. It follows that
\begin{align}
n^{d-1}\kappa_{n,d,h}&=O\big(n^{\max\{d-\beta-1,0\}+\delta_1}h^{-d\gamma_0(\beta,d)+d\delta_2}+n^{d-\beta-1}h^{-d}L_n^{-1}+h^{1-d}\big)
\label{pf:lem1.1}\\
&=O\big(n^{\delta}h^{-d\gamma_0(\beta,d)}+n^{d-\beta-1+\delta}h^{-d}+h^{1-d}\big),\nonumber
\end{align}
using the fact $\gamma_0(\beta,d)=1$ for $\beta\le d-1$. This proves (\ref{nonbt.cum}) for $d\ge 3$.
Furthermore, the case $d=3$ of (\ref{nonbt.mom}) follows from (\ref{pf:lem1.1}) by noting that
$\mu_{n,3,h}=O\big(n^2\kappa_{n,3,h}\big)$.

Finally, for the case $d\ge 4$, note that
\begin{equation}
\label{pf:lem1.2}
\mu_{n,d,h}=O\Big(n^{d-1}\kappa_{n,d,h}+n\sum_{d_1=2}^{d-2}\big|\mu_{n,d_1,h}\mu_{n,d-d_1,h}\big|\Big).
\end{equation}
The special case $d=4$ follows immediately by noting (\ref{nonbt.cum}) and that $\mu_{n,2,h}=O(h^{-1})$.
Assuming that (\ref{nonbt.mom}) holds for any $d<D$ with $D\ge 5$, it then follows from (\ref{pf:lem1.2}) that
(\ref{nonbt.mom}) holds also for $d=D$, using the fact that
\begin{multline*}
n\times n^{D-d_1-\beta-1+\delta}h^{-D+d_1}\Big\{
n^{d_1-\beta-1+\delta}h^{-d_1}+n^{(d_1-2)/2+\delta} h^{-d_1\max\{\gamma_0(\beta,d_1),1/2\}}\Big\}
\\=O\big(n^{D-\beta-1+2\delta}h^{-D}\big)
\end{multline*}
and
\begin{multline*}
n\times n^{(d_1-2)/2+\delta} h^{-d_1\max\{\gamma_0(\beta,d_1),1/2\}}\times
n^{(D-d_1-2)/2+\delta} h^{-(D-d_1)\max\{\gamma_0(\beta,D-d_1),1/2\}}\\
=O\big(n^{(D-2)/2+2\delta} h^{-D\max\{\gamma_0(\beta,D),1/2\}}\big).
\end{multline*}
The proof of (\ref{nonbt.mom}) is thus completed by induction, noting that the order displayed in (\ref{nonbt.mom})
of $\mu_{n,d,h}$ for $d\ge 5$ holds trivially for the cases $d=2,3,4$ as well.
\end{proof}

\begin{proof}[Proof of Theorem~\ref{thm:nonbt}]
Write $\,\U_t=h^{-1}K\big((X_{t}-x_0)/h\big)-\kappa_{n,1,h}$.
We follow the small-block and large-block arguments \citep[Section~6.6.2]{FanYao:2003} by defining
\begin{gather*}
V_L=\sqrt{\dfrac{h}{n}}\sum_{i=1}^{M}\sum_{t=1}^\lambda \U_{t+(i-1)(\lambda+\xi)},\qquad
V_S=\sqrt{\dfrac{h}{n}}\sum_{i=1}^{M}\sum_{t=1}^\xi \U_{t+\lambda+
(i-1)(\lambda+\xi)},\\
V_R=(nh)^{1/2}\big\{\hat{f}_h(x_0)-
\kappa_{n,1,h}\big\}-V_L-V_S,
\end{gather*}
where $\lambda,\xi$ are positive integers satisfying $\xi\rightarrow\infty$, $\xi+h^{-1}=o(\lambda)$ and $\lambda=o(n)$, and
$M=\big[n/(\lambda+\xi)\big]$. 
Note by (\ref{nonbt.mom}) that
\begin{equation}
\label{pf:thm1.1}
\text{Var}(V_R)=O\big\{n^{-1}h(n-M\lambda-M\xi)\mu_{n,2,h}\big\}
=O(n^{-1}\lambda),
\end{equation}
and
\begin{align}
\text{Var}(V_S)&= O\Big(
\xi Mn^{-1}h\sum_{|t|\le\xi}\sum_{|i|\le M}\expn\left[\U_0\U_{t+i(\lambda+\xi)}\right]\Big)\nonumber\\
&=O\bigg\{
\xi Mn^{-1}h\sum_{|t|\le\xi}\Big(\expn\left[\U_0\U_{t}\right]+\sum_{1\le|i|\le M}
h^{-2}(i\lambda)^{-\beta}\Big)\bigg\}\nonumber\\
&=O\Big\{
\xi\lambda^{-1}\big(1+\xi\lambda^{-\beta}h^{-1}\big)\Big\}.
\label{pf:thm1.2}
\end{align}
Writing $\kappa_d(V_L)$ for the $d$-th cumulant of $V_L$ and noting (\ref{nonbt.cum}), we have
\begin{align}
\kappa_2(V_L)&=M\lambda^2(h/n)\kappa_{\lambda,2,h}
+O\big(
M\xi^{-\beta}\big)\nonumber\\
&=
f(x_0)\nu_2+O\big(\xi\lambda^{-1}+\lambda n^{-1}+h^{1-2/\beta}+\xi^{-\beta}\lambda^{-1}n\big)
\label{pf:thm1.3}
\end{align}
and, for $d\ge 3$ and arbitrarily small $\delta>0$,
\begin{align}
\kappa_d(V_L)&=M\lambda^d(h/n)^{d/2}\kappa_{\lambda,d,h}
+O\big(
M\xi^{-\beta}\big)\nonumber\\
&=
O\big\{(nh)^{1-d/2}
+\lambda^{d-\beta-1+\delta}n^{1-d/2}h^{-d/2}\nonumber\\&\qquad +\,\lambda^{\delta}n^{1-d/2}
h^{d/2-d\gamma_0(\beta,d)}+\xi^{-\beta}\lambda^{-1}n\big\}.
\label{pf:thm1.4.1}
\end{align}
Under the additional assumption that $\lambda^2=O(nh)$, the order terms in (\ref{pf:thm1.4.1}) can be bounded further to yield
\begin{multline}
\kappa_d(V_L)=O\big\{(nh)^{-1/2}
+\lambda^{2-\beta+\delta}n^{-1/2}h^{-3/2}\\+
\lambda^{\delta}n^{-1/2}h^{-(3\beta+3)/(2\beta-2)+g_0(\beta)}+\xi^{-\beta}\lambda^{-1}n\big\}.
\label{pf:thm1.4}
\end{multline}
It follows from (\ref{pf:thm1.1}) to (\ref{pf:thm1.4}) that
\begin{eqnarray}
\lefteqn{\prob\Big((nh)^{1/2}\big\{\hat{f}_h(x_0)-f(x_0)\big\}\le y\Big)}\nonumber\\
&=& \prob\Big(V_L+(nh)^{1/2}\big\{\kappa_{n,1,h}-f(x_0)\big\}\le y\Big)\nonumber\\
&&\qquad+\,O\Big\{n^{-1/2}\lambda^{1/2}+\xi^{1/2}\lambda^{-1/2}\big(1+\xi^{1/2}\lambda^{-\beta/2}h^{-1/2}\big)
\Big\}\nonumber
\\
&=&
\Phi\left(\dfrac{y-(nh)^{1/2}\big\{\kappa_{n,1,h}-f(x_0)\big\}}{\sqrt{f(x_0)\nu_2}}\right)\nonumber\\
&&+\,O\Big\{\xi^{1/2}\lambda^{-1/2}+\xi^{-\beta}\lambda^{-1}n+h^{1-2/\beta}+
n^{-1/2+\delta}
h^{-(3\beta+3)/(2\beta-2)+g_0(\beta)}\Big\}.
\label{pf:thm1.5}
\end{eqnarray}
Theorem~\ref{thm:nonbt} then follows by setting 
$\lambda=\big[(nh)^{1/2}\big]$ and $\xi=\big[(n^3h^{-1})^{1/(4\beta+2)}\big]$ in (\ref{pf:thm1.5}), and noting
the expansion for $\kappa_{n,1,h}$ given in (\ref{nonbt.cum}). 
\end{proof}

\begin{lem}
\label{lem:cum-mom.expo}
Suppose that $\alpha(t)=O(e^{-Ct})$ as $t\rightarrow\infty$, for some $C>0$, and that $nh\rightarrow\infty$ 
and $h=o(n^{-\epsilon})$ for some $\epsilon\in(0,1)$. Then we have
\begin{equation}
\kappa_{n,d,h}=
\begin{cases}
f(x_0)+h^{2}f''(x_0)\mu_2/2+O(h^4),& d=1,\\
(nh)^{-1}f(x_0)\nu_2+O\big(n^{-1}\log n\big),& d=2,\\
O\big\{(nh)^{-d+1}\big\},&d\ge 3;
\end{cases}
\label{nonbt.cum.expo}
\end{equation}
and
\begin{equation}
\mu_{n,d,h}=
\begin{cases}
O(h^{-d+1}),&d=2,3,\\
O\big(n^{(d-2)/2} h^{-d/2}\big),&d\ge 4.
\end{cases}
\label{nonbt.mom.expo}
\end{equation}
\end{lem}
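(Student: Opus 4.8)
The plan is to follow the proof of Lemma~\ref{lem:cum-mom} essentially verbatim, the point being that under $\alpha(t)=O(e^{-Ct})$ the mixing coefficient decays geometrically, so that truncating any covariance or cumulant sum at a level $m_n\asymp\log n$ (with a sufficiently large implicit constant) renders every mixing-induced remainder smaller than any negative power of $n$. Consequently the exponents $\gamma_0(\beta,d)$ that governed the polynomial case drop out, and the bounds collapse to the clean forms stated in \eqref{nonbt.cum.expo}--\eqref{nonbt.mom.expo}.

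The case $d=1$ of \eqref{nonbt.cum.expo} is identical to that of \eqref{nonbt.cum}: it uses only $\kappa_{n,1,h}=\int K(u)f(x_0+hu)\,du$ and a fourth-order Taylor expansion of $f$ about $x_0$, and no mixing assumption enters. For $d=2$ I would repeat the decomposition of $\kappa_{n,2,h}$ from the proof of Lemma~\ref{lem:cum-mom} into the variance term $n^{-1}h^{-2}\mathrm{Var}(K((X_0-x_0)/h))$, a near-diagonal sum over $1\le|t|\le m_n$, and a tail over $|t|>m_n$, now with $m_n=\lceil A\log n\rceil$ and $A$ to be chosen large. On the near-diagonal range the boundedness of the joint densities gives $|\mathrm{Cov}(K((X_0-x_0)/h),K((X_t-x_0)/h))|=O(h^2)$, contributing $O((nh)^{-2}\,n\,m_n\,h^2)=O(n^{-1}\log n)$; on the tail the crude bound $|\mathrm{Cov}|\le C^*\alpha(|t|)=O(e^{-C|t|})$ gives a contribution of order $O(e^{-Cm_n}/(nh^2))$, which is $o(n^{-1})$ once $A$ is large enough (using $nh\to\infty$, so $h^{-2}=o(n^2)$). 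Together with $n^{-1}h^{-2}\mathrm{Var}(K((X_0-x_0)/h))=(nh)^{-1}f(x_0)\nu_2\{1+O(h^2)\}+O(n^{-1})$ this yields the $d=2$ line. The bound $\mu_{n,2,h}=O(h^{-1})$ follows similarly from the identity $n^{-1}\mu_{n,2,h}=\kappa_{n,2,h}+(nh)^{-2}\sum_{|t|\le n-1}|t|\,\mathrm{Cov}(\cdots)=O((nh)^{-1})$, after splitting the $|t|$-weighted sum at the same $m_n$.

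For the cumulants with $d\ge 3$ I would argue as follows. Since $\alpha(t)=O(e^{-Ct})$ implies $\alpha(t)=O(t^{-\beta})$ for every $\beta>2$, I can invoke Lemma~\ref{lem:cum-mom} with $\beta=\beta(d)$ taken large and the free constant $\delta$ taken small; because $\gamma_0(\beta,d)\to 0$ as $\beta\to\infty$ and $h=o(n^{-\epsilon})$, both remainder terms $n^{-\beta+\delta}h^{-d}$ and $n^{-d+1+\delta}h^{-d\gamma_0(\beta,d)}$ are then $o((nh)^{-d+1})$, leaving $\kappa_{n,d,h}=O((nh)^{-d+1})$. Equivalently, one may rerun the iterative blocking argument of Lemma~\ref{lem:cum-mom}, replacing the boundary term $h^{-d}n_s^{d-2}n_{s+1}^{-\beta+1}$ by $h^{-d}n_s^{d-2}e^{-Cn_{s+1}}$; a single reduction from $n$ down to a block size $m_n\asymp\log n$ now makes the cross-block error negligible, and $n^{d-1}\kappa_{n,d,h}$ is then controlled by $m_n^{d-1}\kappa_{m_n,d,h}=O(m_n^{d-1}+h^{1-d})=O(h^{1-d})$, using $h=o(1/\log n)$. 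Given the cumulant bounds, the moment bounds follow by the same induction as in Lemma~\ref{lem:cum-mom}: $\mu_{n,3,h}=O(n^2\kappa_{n,3,h})=O(h^{-2})$, and for $d\ge 4$ one uses $\mu_{n,d,h}=O(n^{d-1}\kappa_{n,d,h}+n\sum_{d_1=2}^{d-2}|\mu_{n,d_1,h}\mu_{n,d-d_1,h}|)$ together with the inductive hypothesis $\mu_{n,d_1,h}=O(n^{(d_1-2)/2}h^{-d_1/2})$ --- valid also for $d_1=2,3$ because $nh\to\infty$ --- so that every term on the right is $O(n^{(d-2)/2}h^{-d/2})$.

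The only genuinely delicate step is the $d=2$ estimate, where one must track the competition between the near-diagonal contribution $O(n^{-1}\log n)$ and the tail $O(e^{-Cm_n}/(nh^2))$ in order to confirm that exactly $O(n^{-1}\log n)$, and nothing larger, survives; once that is pinned down, the rest is a routine specialisation of the polynomial-rate arguments, the geometric decay doing all the work previously carried by the intricate $\gamma_0$ bookkeeping.
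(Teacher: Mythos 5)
Your proposal is correct and follows essentially the same route as the paper: the $d=2$ case via the split of the covariance sum at $m_n\asymp\log n$, the $d\ge 3$ cumulants via a single blocking reduction whose cross-block error is killed by the exponential mixing rate, and the moment bounds by the same induction on the identity relating $\mu_{n,d,h}$ to $n^{d-1}\kappa_{n,d,h}$ and products of lower-order moments. The only cosmetic differences are that the paper reduces to blocks of size $[n^{\epsilon}]$ rather than $\lceil A\log n\rceil$ (both work), and your alternative of simply invoking Lemma~\ref{lem:cum-mom} with $\beta=\beta(d)$ large and using $\gamma_0(\beta,d)\to 0$ is a legitimate shortcut for the $d\ge 3$ cumulant bound, though the paper argues directly.
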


\begin{proof}[Proof of Lemma~\ref{lem:cum-mom.expo}]
The expansion for $\kappa_{n,1,h}$ can be proved in the same way as that given in the proof of Lemma~\ref{lem:cum-mom}.

Consider next the case $d=2$. Setting $m_n=\big[2(C\epsilon)^{-1}\log n\big]$, we have
\begin{align*}
\kappa_{n,2,h}&=n^{-1}h^{-2}{\rm Var}\big(K\left((X_0-x_0)/h\right)\big)\\
& +(nh)^{-2}\hspace{-2ex}
\sum_{1\le|t|\le m_n-1}\hspace{-2ex}(n-|t|)\text{Cov}\big(K\left((X_0-x_0)/h\right),K\left((X_t-x_0)/h\right)\big)\\
& +(nh)^{-2}\hspace{-2ex}
\sum_{m_n\le|t|\le n-1}\hspace{-2ex}(n-|t|)\text{Cov}\big(K\left((X_0-x_0)/h\right),K\left((X_t-x_0)/h\right)\big)\\
&=(nh)^{-1}f(x_0)\nu_2\big\{1+O(h^2)\big\}+O\big(n^{-1}m_n+n^{-1}h^{-2}e^{-Cm_n}\big)\\
&=(nh)^{-1}f(x_0)\nu_2+O\big(n^{-1}\log n\big),
\end{align*}
and that
\begin{align*}
\mu_{n,2,h}&=n\kappa_{n,2,h}+n^{-1}h^{-2}\hspace{-1ex}
\sum_{|t|\le n-1}\hspace{-1ex}|t|\text{Cov}\big(K\left((X_0-x_0)/h\right),K\left((X_t-x_0)/h\right)\big)\\
&= O\big(h^{-1}+n^{-1}m_n^2+n^{-1}h^{-2}m_ne^{-Cm_n}\big)=O(h^{-1}),
\end{align*}
which proves (\ref{nonbt.cum.expo}) and (\ref{nonbt.mom.expo}) for $d=2$.

Consider now the case $d\ge 3$. Setting $k_n=\big[n^\epsilon\big]$, we have
\[
n^{d-1}\kappa_{n,d,h}=O\left(k_n^{d-1}\kappa_{k_n,d,h}+h^{-d}n^{d-2}e^{-Ck_n}\right)
=O\big(k_n^{d-1}+h^{1-d}\big)=O(h^{1-d}),\]
so that (\ref{nonbt.cum.expo}) holds for $d\ge 3$.
That (\ref{nonbt.mom.expo}) holds for $d=3$ follows by noting that
$\mu_{n,3,h}=O\big(n^2\kappa_{n,3,h}\big)$.

It is clear that the order displayed in (\ref{nonbt.mom.expo})
of $\mu_{n,d,h}$ for $d\ge 4$ holds for the cases $d=2,3$. Noting (\ref{pf:lem1.2}) and (\ref{nonbt.cum.expo}), we have,
for $d\ge 4$, that
\[
\mu_{n,d,h}=O\Big(h^{-d+1}+n\sum_{d_1=2}^{d-2}\big|\mu_{n,d_1,h}\mu_{n,d-d_1,h}\big|\Big),
\]
so that (\ref{nonbt.mom.expo}) holds for $d\ge 4$ by induction.
\end{proof}

\begin{proof}[Proof of Theorem~\ref{thm:nonbt.expo}]
Note first that by (\ref{nonbt.cum.expo}), 
\[(nh)^{d/2}\kappa_{n,d,h}=
\begin{cases}
(nh)^{1/2}f(x_0)+n^{1/2}h^{5/2}f''(x_0)\mu_2/2+O\big(n^{1/2}h^{9/2}\big),&d=1,\\
f(x_0)\nu_2+O\big(h\log n\big),& d=2,\\
O\big\{(nh)^{-(d-2)/2}\big\}=O\big\{(nh)^{-1/2}\big\}, & d\ge 3.
\end{cases}
\]
It then follows that
\begin{eqnarray}
\lefteqn{\prob\Big((nh)^{1/2}\big\{\hat{f}_h(x_0)-f(x_0)\big\}\le y\Big)}\nonumber\\
&=&
\Phi\left(\dfrac{y-n^{1/2}h^{5/2}f''(x_0)\mu_2/2}{\sqrt{f(x_0)\nu_2}}\right)
+O\big\{h\log n+(nh)^{-1/2}+n^{1/2}h^{9/2}\big\},\nonumber
\end{eqnarray}
which implies Theorem~\ref{thm:nonbt.expo}, using the fact that $h=O(n^{-1/5})$.
\end{proof}

Define $\hat\kappa^{(d)}_{b,\ell,k}$ to be the $d$-th conditional cumulant of
$\hat{f}^*_{b,\ell,k}$ given $X_1,\ldots,X_n$.
\begin{lem}
\label{lem:bt.cum}
Assume that $\alpha(t)=O(t^{-\beta})$ as $t\rightarrow\infty$, for some $\beta>2$.
Suppose that $b\ell=O(n)$, $\ell=O(bk)$ and
$n^{-1}\ell+(\ell k)^{-1}+k+(nk_2)^{-1}+k_2=O(n^{-\epsilon})$,
for some arbitrarily small $\epsilon>0$. Then, for any arbitrarily small $\delta>0$,
\begin{align}
\hat\kappa^{(1)}_{b,\ell,k_2}&=f(x_0)+k_2^2f''(x_0)\mu_2/2+O_p\big\{k_2^4+(nk_2)^{-1/2}\big\},
\label{bt.cum1}\\[1ex]
\hat\kappa^{(2)}_{b,\ell,k}&=(b\ell k)^{-1}f(x_0)\nu_2+O_p\big(
b^{-1}\ell^{-1}k^{-2/\beta}\big),
\label{bt.cum2}\\[1ex]
\hat\kappa^{(d)}_{b,\ell,k}&=O_p\Big\{(b\ell k)^{1-d}+b^{1-d}\ell^{-\beta+\delta}k^{-d}
\label{bt.cumd}\\&\qquad+(b\ell)^{1-d+\delta}k^{-d\gamma_0(\beta,d)}
+n^{-1/2+\delta}b^{1-d}\ell^{(1-\beta)/2}k^{-d}\nonumber\\
&\qquad+n^{-1/2+\delta}b^{1-d}\ell^{(1-d)/2}k^{-d\max\{1/2,\gamma_0(\beta,2d)\}}
\Big\},\quad d\ge3.
\nonumber
\end{align}
\end{lem}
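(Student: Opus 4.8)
The plan is to condition on the sample and exploit that, given $X_1,\ldots,X_n$, the estimator $\hat{f}^*_{b,\ell,k}=b^{-1}\sum_{i=1}^b U^*_{i,k,\ell}$ is an average of $b$ independent and identically distributed copies of a single variable whose distribution is the empirical law of the block averages $U_{1,k,\ell},\ldots,U_{n-\ell+1,k,\ell}$. As cumulants add over independent summands and scale as the $d$-th power, this gives at once $\hat\kappa^{(d)}_{b,\ell,k}=b^{1-d}\hat\kappa^{(d)}_U$, where $\hat\kappa^{(d)}_U$ is the $d$-th cumulant of that empirical law: for $d=1$ it equals $\bar U_{k,\ell}:=(n-\ell+1)^{-1}\sum_{i=1}^{n-\ell+1}U_{i,k,\ell}$, and for $d\ge 2$ it equals the (shift-invariant) cumulant polynomial $P_d$ in the empirical central moments $\hat m^{(r)}_U=(n-\ell+1)^{-1}\sum_i(U_{i,k,\ell}-\bar U_{k,\ell})^r$, $2\le r\le d$. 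So the task reduces to controlling $\bar U_{k,\ell}$ and the $\hat m^{(r)}_U$.

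First I would treat $d=1$. A counting identity rewrites $\sum_{i=1}^{n-\ell+1}\sum_{t=i}^{i+\ell-1}K((X_t-x_0)/k)$ as $\sum_{t=1}^{n}c_t\,K((X_t-x_0)/k)$, where $c_t=\ell$ except for the $O(\ell)$ indices within $\ell$ of an endpoint; separating the mean from the fluctuation of the resulting endpoint correction, and using that $\ell=O(bk)$ and $b\ell=O(n)$ force $\ell^2=O(nk)$ while $\ell=o(n)$, one gets $\bar U_{k,\ell}=\hat{f}_k(x_0)+O_p((nk)^{-1/2})$. Lemma~\ref{lem:cum-mom} (cases $d=1,2$) then gives $\hat{f}_k(x_0)=f(x_0)+k^2f''(x_0)\mu_2/2+O_p(k^4+(nk)^{-1/2})$, which is (\ref{bt.cum1}) after relabelling $k$ as $k_2$.

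For $d\ge 2$ I would set $V_i=U_{i,k,\ell}-\expn U_{1,k,\ell}=(\ell k)^{-1}\sum_{t=i}^{i+\ell-1}\{K((X_t-x_0)/k)-\expn K((X_0-x_0)/k)\}$ and $\hat M^{(r)}=(n-\ell+1)^{-1}\sum_i V_i^r$. By strict stationarity each $V_i$ has the law of $\hat{f}_{\ell,k}(x_0)-\expn\hat{f}_{\ell,k}(x_0)$, the centred kernel density estimator from $\ell$ observations at bandwidth $k$, so $\expn\hat M^{(r)}$ is exactly the $r$-th central moment $m^{(r)}_\ell$ of $\hat{f}_{\ell,k}(x_0)$; by the moment--cumulant formula $m^{(r)}_\ell$ is a polynomial in $\kappa_{\ell,2,k},\ldots,\kappa_{\ell,r,k}$, whose order I would read off from Lemma~\ref{lem:cum-mom} applied at sample size $\ell$ (in particular $m^{(2)}_\ell=\kappa_{\ell,2,k}=(\ell k)^{-1}f(x_0)\nu_2+O(\ell^{-1}k^{-2/\beta})$). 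The discrepancy between $\hat m^{(r)}_U$ and $\hat M^{(r)}$ is a polynomial in $\bar V:=\bar U_{k,\ell}-\expn U_{1,k,\ell}$ (mean zero, $\text{Var}\,\bar V=O((nk)^{-1})$) with lower-order coefficients, hence negligible. Substituting $\hat M^{(r)}=m^{(r)}_\ell+E^{(r)}$ and expanding, $\hat\kappa^{(d)}_U=P_d(m^{(2)}_\ell,\ldots,m^{(d)}_\ell)+(\text{terms with at least one }E^{(r)})=\kappa_{\ell,d,k}+R$; the first summand is the true $d$-th cumulant of $\hat{f}_{\ell,k}(x_0)$, of order $O((\ell k)^{1-d}+\ell^{-\beta+\delta}k^{-d}+\ell^{1-d+\delta}k^{-d\gamma_0(\beta,d)})$ by Lemma~\ref{lem:cum-mom}, and after multiplication by $b^{1-d}$ these are the first three terms of (\ref{bt.cumd}). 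The remainder $R$ I would bound through $E^{(r)}=O_p((\text{Var}\,\hat M^{(r)})^{1/2})$; for $d=2$ the same estimate shows the fluctuation of $\hat m^{(2)}_U$ is of strictly smaller order than $(\ell k)^{-1}$, which gives (\ref{bt.cum2}).

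The hard part is the variance bound $\text{Var}\,\hat M^{(r)}=(n-\ell+1)^{-2}\sum_{i,i'}\text{Cov}(V_i^r,V_{i'}^r)$. Overlapping index pairs ($|i-i'|<\ell$) I would control by Cauchy--Schwarz, bounding each covariance by $\text{Var}(V_1^r)\le\expn V_1^{2r}=m^{(2r)}_\ell$, once more supplied by Lemma~\ref{lem:cum-mom} with $d\mapsto 2r$; well-separated pairs I would control by combining this same ceiling with a covariance inequality for $\alpha$-mixing sequences (which gives decay of order $\alpha(s)\lesssim s^{-\beta}$ in the block lag $s$) and summing the minimum of the two estimates, which again produces a total of order $O(\ell\,m^{(2r)}_\ell)$. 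Taking square roots converts this into the two terms carrying the $n^{-1/2+\delta}$ prefactor in (\ref{bt.cumd}), the exponent $\max\{\gamma_0(\beta,2d),1/2\}$ arising from the two competing pieces of $m^{(2d)}_\ell$ --- its Gaussian part, of order $\kappa_{\ell,2,k}^{d}$, and the mixing remainder, of order $\kappa_{\ell,2d,k}$. The remaining obstacle is bookkeeping: checking that every cross term in the expansion of $P_d$ (a product of $m^{(s)}_\ell$ factors with $E^{(r)}$ factors) and all the $\bar V$-centring corrections are dominated by one of the five displayed terms, and --- most delicately --- that the side conditions enter exactly where needed ($\ell=O(bk)$ and $b\ell=O(n)$ giving $\ell^2=O(nk)$; $k=O(n^{-\epsilon})$ and $(\ell k)^{-1}=O(n^{-\epsilon})$ giving $\ell\gtrsim n^{2\epsilon}$), so that after taking square roots and using the arbitrarily small $\delta$ the residual $n^{-1/2+o(1)}k^{-d}$-type pieces from the separated-pair sums are absorbed into the $n^{-1/2+\delta}\ell^{(1-\beta)/2}k^{-d}$ term. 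This asymptotic accounting, not any single inequality, is where the effort concentrates.
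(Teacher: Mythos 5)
Your proposal is correct and follows essentially the same route as the paper's own proof: the paper likewise exploits that the conditional cumulants scale as $b^{1-d}$ times the cumulants of the empirical law of the block averages, expresses these as polynomials in the empirical moments $\V^{(r)}_{n,\ell,k}$ (centred at the population rather than the empirical mean, an immaterial difference for $d\ge 2$), reads off their expectations from Lemma~\ref{lem:cum-mom} applied at sample size $\ell$, and bounds their fluctuations by the same overlapping-versus-separated covariance decomposition, yielding exactly the $n^{-1/2}\ell^{(1-\beta)/2}k^{-d}$ and $n^{-1/2}\ell^{1-d}|\mu_{\ell,2d,k}|^{1/2}$ error terms you describe. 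No substantive gap.
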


\begin{proof}[Proof of Lemma~\ref{lem:bt.cum}]

Writing $\V^{(d)}_{n,\ell,k}=(n-\ell+1)^{-1}\sum_{i=1}^{n-\ell+1}\big(U_{i,k,\ell}-\kappa_{\ell,1,k}\big)^d$, we have,
for $d\ge 1$,
\begin{align*}
\expn\big[\V^{(d)}_{n,\ell,k}\big]&=\expn\Big[\big(U_{1,k,\ell}-\kappa_{\ell,1,k}\big)^d
\Big]=O\big(\ell^{1-d}\mu_{\ell,d,k}\big), \\
n\text{Var}\big(\V^{(d)}_{n,\ell,k}\big)&=O\big(\ell^{1-\beta}k^{-2d}+\ell^{2-2d}\mu_{\ell,2d,k}\big).
\end{align*}
It then follows that
\begin{multline}
\label{pf:lem3.1}
\V^{(d)}_{n,\ell,k}=\expn\Big[\big(U_{1,k,\ell}-\kappa_{\ell,1,k}\big)^d
\Big]\\+O_p\Big(n^{-1/2}\ell^{(1-\beta)/2}k^{-d}
+n^{-1/2}\ell^{1-d}\big|\mu_{\ell,2d,k}\big|^{1/2}\Big).
\end{multline}
Note that
$\hat\kappa^{(1)}_{b,\ell,k_2}=(n-\ell+1)^{-1}\sum_{i=1}^{n-\ell+1}U_{i,k_2,\ell}=\kappa_{\ell,1,k_2}+\V^{(1)}_{n,\ell,k_2}$, so that
(\ref{bt.cum1}) follows by (\ref{nonbt.cum}) and (\ref{pf:lem3.1}).

Consider next the case $d=2$. Using Lemma~\ref{lem:cum-mom} and (\ref{pf:lem3.1}), we have
\begin{align*}
\hat\kappa^{(2)}_{b,\ell,k}&=b^{-1}\Big\{\V^{(2)}_{n,\ell,k}-\big(\V^{(1)}_{n,\ell,k}\big)^2\Big\}\\
&=b^{-1}\kappa_{\ell,2,k}+O_p\Big(n^{-1/2}b^{-1}\ell^{(1-\beta)/2}k^{-2}
+n^{-1/2}b^{-1}\ell^{-1}\big|\mu_{\ell,4,k}\big|^{1/2}+(nbk)^{-1}\Big)\\
&=(b\ell k)^{-1}f(x_0)\nu_2+O\big(b^{-1}\ell^{-1}k^{-2/\beta}\big)+O_p(\Delta_n),
\end{align*}
where 
\begin{align*}
\Delta_n&\equiv 
n^{-1/2+\delta}b^{-1}\ell^{(1-\beta)/2}k^{-2}+(nbk)^{-1}+n^{-1/2}b^{-1}\ell^{-1/2}k^{-1} \\
& \quad \quad +n^{-1/2+\delta}b^{-1}\ell^{-1}k^{-2\gamma_0(\beta,4)}\\
&=O_p\big(b^{-1}\ell^{-1}k^{-2/\beta}\big),
\end{align*}
which proves (\ref{bt.cum2}).

For $d\ge 3$, we have, using Lemma~\ref{lem:cum-mom} and (\ref{pf:lem3.1}) again, that
\begin{align*}
b^{d-1}&\hat\kappa^{(d)}_{b,\ell,k}=
\kappa_{\ell,d,k}+O_p\Big(n^{-1/2}\ell^{(1-\beta)/2}k^{-d}
+n^{-1/2}\ell^{1-d}\big|\mu_{\ell,2d,k}\big|^{1/2}\Big)\\
+&\, O_p\left\{
\sum_{d_1=2}^{d-2}\V^{(d_1)}_{n,\ell,k}\Big(n^{-1/2}\ell^{(1-\beta)/2}k^{-d+d_1}
+n^{-1/2}\ell^{1-d+d_1}\big|\mu_{\ell,2(d-d_1),k}\big|^{1/2}\Big)
\right\}\\
&= O_p\Big\{(\ell k)^{-d+1}+\ell^{-\beta+\delta}k^{-d}+\ell^{-d+1+\delta}
k^{-d\gamma_0(\beta,d)}+n^{-1/2+\delta}\ell^{(1-\beta)/2}k^{-d}\\
& \qquad\qquad+\,
n^{-1/2+\delta}\ell^{(1-d)/2} k^{-d\max\{\gamma_0(\beta,2d),1/2\}}\Big\},
\end{align*}
which proves (\ref{bt.cumd}).
\end{proof}

\begin{proof}[Proof of Theorem~\ref{thm:boot}]
Noting (\ref{nonbt.cum}) and (\ref{bt.cum1}), we have
\begin{multline}
\label{pf:thm3.1}
\expn\big[\hat{f}^*_{b,\ell,k_2}\big|X_1,\ldots,X_n\big]-\hat{f}_{k_3}(x_0)
=\hat\kappa^{(1)}_{b,\ell,k_2}-\kappa_{n,1,k_3}+O_p\big(\kappa_{n,2,k_3}^{1/2}\big)\\
= \big(k_2^2-k_3^2\big)f''(x_0)\mu_2/2
+O_p\big\{k_2^4+k_3^4+(nk_2)^{-1/2}+(nk_3)^{-1/2}\big\}.
\end{multline}
From (\ref{bt.cum2}) we deduce that
\begin{equation}
\text{Var}\big((b\ell k_1)^{1/2}\hat{f}^*_{b,\ell,k_1}\big)
=b\ell k_1\hat\kappa^{(2)}_{b,\ell,k_1}=f(x_0)\nu_2+O_p\big(
k_1^{1-2/\beta}\big).
\label{pf:thm3.2}
\end{equation}
For $d\ge 3$, we have, by (\ref{bt.cumd}), that the $d$-th cumulant of $(b\ell k_1)^{1/2}\hat{f}^*_{b,\ell,k_1}$ has the expression
\begin{multline}\label{pf:thm3.3}
(b\ell k_1)^{d/2}\hat\kappa^{(d)}_{b,\ell,k_1} 
=O_p\bigg\{(b\ell k_1)^{1-d/2}+n^{-1/2+\delta}b\ell^{(1-\beta)/2}\\
+\left(\dfrac{\ell}{bk_1}\right)^{d/2}
\Big[b\ell^{-\beta+\delta}
+(\ell k_1)^{-d}k_1^{d(2-\gamma_0(\beta,d))}(b\ell)^{1+\delta}\\
+(\ell k_1)^{-d/2}n^{-1/2+\delta}b\ell^{1/2}k_1^{(d/2)(2-\max\{2\gamma_0(\beta,2d)-1,0\})}\Big]
\bigg\} \\
=O_p\Big\{(b\ell k_1)^{-1/2}
+b^{-1/2}\ell^{-1/2+\delta}k_1^{-3/2+3g_1(\beta)} \\
+ n^{-1/2+\delta}b^{-1/2}\ell^{1/2}k_1^{(3/2)(g_2(\beta)-1)}
+n^{-1/2+\delta}b^{-1/2}\ell^{2-\beta/2}k_1^{-3/2}
\Big\}.
\end{multline}
Theorem~\ref{thm:boot} then follows by (\ref{pf:thm3.1}), (\ref{pf:thm3.2}), (\ref{pf:thm3.3}) and noting that
\[(b\ell k_1)^{-1/2}
+n^{-1/2+\delta}b^{-1/2}\ell^{2-\beta/2}k_1^{-3/2}=O_p\big(
k_1^{1-2/\beta}\big)\]
for sufficiently small $\delta>0$.
\end{proof}

\begin{lem}
\label{lem:bt.cum.expo}
Assume that $\alpha(t)=O(e^{-Ct})$ as $t\rightarrow\infty$, for some $C>0$.
Suppose that $b\ell=O(n)$ and
$n^{-1}\ell+(\ell k)^{-1}+k+(nk_2)^{-1}+k_2=o(1)$.
Then, for any arbitrarily small $\delta>0$,
\begin{align}
\hat\kappa^{(1)}_{b,\ell,k_2}&=f(x_0)+k_2^2f''(x_0)\mu_2/2+O_p\big\{k_2^4+(nk_2)^{-1/2}\big\},
\label{bt.cum1.expo}\\[1ex]
\hat\kappa^{(2)}_{b,\ell,k}&=(b\ell k)^{-1}f(x_0)\nu_2+O_p\big(
b^{-1}\ell^{-1}\log\ell+n^{-1/2}b^{-1}\ell^{-1/2}k^{-1}\big),
\label{bt.cum2.expo}\\[1ex]
\hat\kappa^{(d)}_{b,\ell,k}&=O_p\Big\{(b\ell k)^{1-d}
+n^{-1/2}b^{1-d}\ell^{(1-d)/2} k^{-d/2}\Big\},\qquad d\ge3.
\label{bt.cumd.expo}
\end{align}
\end{lem}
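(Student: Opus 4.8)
The proof will closely parallel that of Lemma~\ref{lem:bt.cum}, with the polynomial-mixing inputs of Lemma~\ref{lem:cum-mom} replaced throughout by the exponential-mixing bounds of Lemma~\ref{lem:cum-mom.expo}. The starting point is again the average of $d$-th powers of centred block statistics,
\[ \V^{(d)}_{n,\ell,k}=(n-\ell+1)^{-1}\sum_{i=1}^{n-\ell+1}\big(U_{i,k,\ell}-\kappa_{\ell,1,k}\big)^d, \]
for which stationarity gives $\expn\big[\V^{(d)}_{n,\ell,k}\big]=\expn\big[(U_{1,k,\ell}-\kappa_{\ell,1,k})^d\big]=O\big(\ell^{1-d}\mu_{\ell,d,k}\big)$. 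The essential preliminary is a bound on $\text{Var}\big(\V^{(d)}_{n,\ell,k}\big)$: two blocks $U_{i,k,\ell}$, $U_{j,k,\ell}$ overlap only for $|i-j|<\ell$, in which case the covariance is at most $\expn\big[(U_{1,k,\ell}-\kappa_{\ell,1,k})^{2d}\big]=O\big(\ell^{1-2d}\mu_{\ell,2d,k}\big)$, whereas for larger $|i-j|$ the blocks are separated by a gap on which $\alpha$-mixing forces the covariance to decay exponentially, so that the well-separated pairs contribute only a negligible amount (using $\ell\to\infty$, which follows from $(\ell k)^{-1}=o(1)$ and $k=o(1)$). This gives $n\,\text{Var}\big(\V^{(d)}_{n,\ell,k}\big)=O\big(\ell^{2-2d}\mu_{\ell,2d,k}\big)$, hence by Chebyshev the exponential analogue of (\ref{pf:lem3.1}),
\[ \V^{(d)}_{n,\ell,k}=\expn\big[(U_{1,k,\ell}-\kappa_{\ell,1,k})^d\big]+O_p\big(n^{-1/2}\ell^{1-d}|\mu_{\ell,2d,k}|^{1/2}\big). \]

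Given this, (\ref{bt.cum1.expo}) follows from $\hat\kappa^{(1)}_{b,\ell,k_2}=\kappa_{\ell,1,k_2}+\V^{(1)}_{n,\ell,k_2}$, the vanishing of $\mu_{\ell,1,k_2}$, the bound $\mu_{\ell,2,k_2}=O(k_2^{-1})$ of (\ref{nonbt.mom.expo}) (so the stochastic remainder is $O_p((nk_2)^{-1/2})$), and a Taylor expansion of $\kappa_{\ell,1,k_2}=\int K(u)f(x_0+k_2u)\,du$ about $x_0$. For (\ref{bt.cum2.expo}) I would write $\hat\kappa^{(2)}_{b,\ell,k}=b^{-1}\big\{\V^{(2)}_{n,\ell,k}-(\V^{(1)}_{n,\ell,k})^2\big\}$, identify $\expn\big[(U_{1,k,\ell}-\kappa_{\ell,1,k})^2\big]=\kappa_{\ell,2,k}=(\ell k)^{-1}f(x_0)\nu_2+O(\ell^{-1}\log\ell)$ by (\ref{nonbt.cum.expo}), use $\mu_{\ell,4,k}=O(\ell k^{-2})$ from (\ref{nonbt.mom.expo}) to bound the remainder from the display above by $O_p(n^{-1/2}\ell^{-1/2}k^{-1})$, and absorb the term $(\V^{(1)}_{n,\ell,k})^2=O_p((nk)^{-1})$ using $\ell=O(n)$.

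For (\ref{bt.cumd.expo}) with $d\ge3$, since $\hat f^*_{b,\ell,k}$ is a conditional mean of $b$ i.i.d.\ draws from the empirical measure on $\{U_{i,k,\ell}\}$, its $d$-th conditional cumulant equals $b^{1-d}$ times the $d$-th cumulant of that empirical measure, which itself equals $\V^{(d)}_{n,\ell,k}$ plus a polynomial in the lower-order centred averages $\V^{(d_1)}_{n,\ell,k}$, $2\le d_1\le d-2$. Substituting the last display into each factor, using $\expn\big[(U_{1,k,\ell}-\kappa_{\ell,1,k})^d\big]=\kappa_{\ell,d,k}$ up to products of lower cumulants with $\kappa_{\ell,d,k}=O((\ell k)^{1-d})$, and $\mu_{\ell,2d,k}=O(\ell^{d-1}k^{-d})$, one obtains $b^{d-1}\hat\kappa^{(d)}_{b,\ell,k}=O_p\big\{(\ell k)^{1-d}+n^{-1/2}\ell^{(1-d)/2}k^{-d/2}\big\}$ after checking that the cross-terms generated by the moment--cumulant and multinomial expansions are of no larger order; dividing by $b^{d-1}$ yields (\ref{bt.cumd.expo}). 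The one place where real work is needed, beyond bookkeeping against Lemma~\ref{lem:cum-mom.expo}, is this variance estimate for $\V^{(d)}_{n,\ell,k}$: one must check, via a covariance inequality for $\alpha$-mixing sequences applied to the bounded variables $(U_{i,k,\ell}-\kappa_{\ell,1,k})^d$, that the non-overlapping block pairs, weighted by the exponentially decaying $\alpha(\cdot)$, contribute no more than the overlap term, so that the variance is governed by the $O(\ell)$ neighbouring pairs alone.
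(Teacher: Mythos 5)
Your proposal follows essentially the same route as the paper's proof: the same decomposition via $\V^{(d)}_{n,\ell,k}$, the same variance bound $n\,\mathrm{Var}\big(\V^{(d)}_{n,\ell,k}\big)=O\big(e^{-C\ell}k^{-2d}+\ell^{2-2d}\mu_{\ell,2d,k}\big)=O\big(\ell^{2-2d}\mu_{\ell,2d,k}\big)$ obtained by separating overlapping from well-separated block pairs, and the same substitution of the exponential-mixing bounds of Lemma~\ref{lem:cum-mom.expo} into the moment--cumulant identities for $\hat\kappa^{(d)}_{b,\ell,k}$. The bookkeeping for (\ref{bt.cum1.expo})--(\ref{bt.cumd.expo}), including the absorption of $(\V^{(1)}_{n,\ell,k})^2$ using $\ell=O(n)$ and the dominance of the cross-terms for $d\ge 3$, matches the paper's argument.
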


\begin{proof}[Proof of Lemma~\ref{lem:bt.cum.expo}]
Arguing as in the proof of Lemma~\ref{lem:bt.cum}, we have,
for $d\ge 1$, 
\[
\expn\big[\V^{(d)}_{n,\ell,k}\big]=O\big(\ell^{1-d}\mu_{\ell,d,k}\big)
\]
and that
\begin{equation}
\label{pf:lem4.1}
\begin{split}
n\text{Var}\big(\V^{(d)}_{n,\ell,k}\big)=O\big(e^{-C\ell}k^{-2d}+\ell^{2-2d}\mu_{\ell,2d,k}\big)=O\big(\ell^{2-2d}\mu_{\ell,2d,k}\big),\\
\V^{(d)}_{n,\ell,k}=\expn\Big[\big(U_{1,k,\ell}-\kappa_{\ell,1,k}\big)^d
\Big]+O_p\Big(n^{-1/2}\ell^{1-d}\big|\mu_{\ell,2d,k}\big|^{1/2}\Big).
\end{split}
\end{equation}
The proof of (\ref{bt.cum1.expo}) is exactly the same as that proving (\ref{bt.cum1}).
The expansion (\ref{bt.cum2.expo}) follows by Lemma~\ref{lem:cum-mom.expo}, (\ref{pf:lem4.1})
and the fact that $\hat\kappa^{(2)}_{b,\ell,k}=b^{-1}\big\{\V^{(2)}_{n,\ell,k}-\big(\V^{(1)}_{n,\ell,k}\big)^2\big\}$.

For $d\ge 3$, we have, using Lemma~\ref{lem:cum-mom.expo} and (\ref{pf:lem4.1}) again, that
\begin{align*}
b^{d-1}\hat\kappa^{(d)}_{b,\ell,k}&=
\kappa_{\ell,d,k}+O_p\Big(n^{-1/2}\ell^{1-d}\big|\mu_{\ell,2d,k}\big|^{1/2}\Big)\\
&+\,O_p\Big\{
n^{-1/2}\ell^{2-d}\sum_{d_1=2}^{d-2}\Big(\mu_{\ell,d_1,k}+n^{-1/2}\big|\mu_{\ell,2d_1,k}\big|^{1/2}
\Big)\big|\mu_{\ell,2(d-d_1),k}\big|^{1/2}
\Big\}\\
&= O_p\Big\{(\ell k)^{-d+1}+n^{-1/2}\ell^{(1-d)/2} k^{-d/2}\Big\},
\end{align*}
which proves (\ref{bt.cumd.expo}).
\end{proof}

\begin{proof}[Proof of Theorem~\ref{thm:boot.expo}]
In view of
(\ref{nonbt.cum.expo}) and (\ref{bt.cum1.expo}), (\ref{pf:thm3.1}) holds for 
\[
\expn\big[\hat{f}^*_{b,\ell,k_2}\big|X_1,\ldots,X_n\big]-\hat{f}_{k_3}(x_0)
\]
under the conditions of  Theorem~\ref{thm:boot.expo}.
From (\ref{bt.cum2.expo}) we deduce that
\begin{equation}
\text{Var}\big((b\ell k_1)^{1/2}\hat{f}^*_{b,\ell,k_1}\big)
=b\ell k_1\hat\kappa^{(2)}_{b,\ell,k_1}=f(x_0)\nu_2+O_p\big(
k_1\log\ell+n^{-1/2}\ell^{1/2}\big).
\label{pf:thm4.2}
\end{equation}
For $d\ge 3$, we have, by (\ref{bt.cumd.expo}), that the $d$-th cumulant of $(b\ell k_1)^{1/2}\hat{f}^*_{b,\ell,k_1}$ has the expression
\begin{align}
(b\ell k_1)^{d/2}\hat\kappa^{(d)}_{b,\ell,k_1}&=O_p\big\{
(b\ell k_1)^{1-d/2}
+n^{-1/2}b^{1-d/2}\ell^{1/2}\big\}\nonumber\\
&=O_p\big\{
(b\ell k_1)^{-1/2}
+n^{-1/2}b^{-1/2}\ell^{1/2}\big\}.
\label{pf:thm4.3}
\end{align}
Theorem~\ref{thm:boot.expo} then follows by (\ref{pf:thm3.1}), (\ref{pf:thm4.2}) and (\ref{pf:thm4.3}).
\end{proof}

\begin{proof}[Proof of Theorem~\ref{thm:boot1}]
Note first that Theorem~\ref{thm:boot} and (\ref{bias.cond}) together imply 
\[ \prob\big(\hat{T}^*_{b,\ell,k_1}\le y\big|X_1,\ldots,X_n\big)=
\Phi\left(\dfrac{y-n^{1/2}h^{5/2}f''(x_0)\mu_2/2}{\sqrt{f(x_0)\nu_2}}\right)+O_p(\Xi_n),\]
where $\Xi_n=k_1^{(\beta-2)/\beta}+b^{-1/2}\ell^{-1/2+\delta^*}k_1^{-3/2+3g_1(\beta)}+n^{-1/2+\delta^*}b^{-1/2}\ell^{1/2}k_1^{-3(1-g_2(\beta))/2}$,
for any arbitrarily small $\delta^*>0$. It suffices to show
\begin{equation}
\Xi_n=o\big(n^{-(\beta-1)(\beta-2)/(5\beta^2-5\beta-4)}\big).
\label{pf:thm5.0}
\end{equation}

Consider case (i) $\beta\le\beta_1$. If 
$b=O\big(n^{(2 -3 \beta +3 \beta g_1(\beta))/(4-5\beta + 6\beta g_1(\beta))}\big)$, then we have
$\ell\propto b^{1+\beta/(2 -3\beta + 3\beta g_1(\beta))}$ and
$k_1\propto n^{\delta'}\ell/b$, so that
\begin{align}
 \Xi_n&=O\big(n^{\delta'(\beta-2)/\beta}b^{(\beta-2)/(2 -3 \beta  +3 \beta g_1(\beta))}\big) \nonumber \\
&=O\big(n^{\delta'(\beta-2)/\beta+(b_{min}(\beta)+2\delta)(\beta-2)/(2 -3 \beta  +3 \beta g_1(\beta))}\big)\nonumber\\
&=o\big(n^{-(\beta-1)(\beta-2)/(5\beta^2-5\beta-4)-\delta(\beta-2)(2 + \beta + 3 \beta g_1(\beta))/
(6\beta^2(1-g_1(\beta))-4 \beta)}\big).
\label{pf:thm5.1}
\end{align}
If 
$bn^{-(2 -3 \beta +3 \beta g_1(\beta))/(4-5\beta + 6\beta g_1(\beta))}\rightarrow\infty$ and
$b=O\big(n^{1 + \beta/(4 - 5 \beta + 6 \beta g_1(\beta))}\big)$, then we have $\ell\propto n/b$ and $k_1\propto
 n^{-\beta/(5\beta-4-6\beta g_1(\beta))+\delta'}$, so that
\begin{equation}
\Xi_n=O\big(n^{-(\beta-2)/(5\beta-6\beta g_1(\beta)-4)+\delta'(\beta-2)/\beta}\big).
\label{pf:thm5.2}
\end{equation}
If 
$bn^{-1-\beta/(4 - 5 \beta + 6 \beta g_1(\beta))}\rightarrow\infty$, then we have $\ell\propto n/b$ and $k_1\propto n^{-1+\delta'}b$, so that
\begin{equation}
\Xi_n=O\big\{(n^{-1+\delta'}b)^{(\beta-2)/\beta}\big\}
=o\big(n^{\{b_{max}(\beta)-1-\delta/2\}(\beta-2)/\beta}\big).
\label{pf:thm5.3}
\end{equation}

Consider next case (ii) $\beta>\beta_1$. If $b=O(n^{2/3})$, then
$\ell\propto b^{1/2}$ and $k_1\propto n^{\delta'}b^{-1/2}$, so that
\begin{align}
\Xi_n&=O\big\{(b^{-1/2}n^{\delta'})^{(\beta-2)/\beta}+
n^{-1/2-3\delta'(1-g_2(\beta))/2}b^{(2 - 3 g_2(\beta))/4}\big\}
\nonumber\\
&=o\Big\{\big(n^{-b_{min}(\beta)/2-\delta/2}\big)^{(\beta-2)/\beta}+
\pmb{1}\{g_2(\beta)\le 2/3\}n^{-(1+3 g_2(\beta))/6}\nonumber\\
& \qquad+\pmb{1}\{g_2(\beta)>2/3\}n^{-1/2-(b_{min}(\beta)+2\delta)(3 g_2(\beta)-2)/4}
\Big\}\nonumber\\
&=o\big(n^{-b_{min}(\beta)(\beta-2)/(2\beta)}\big).
\label{pf:thm5.4}
\end{align}
If $bn^{-2/3}\rightarrow\infty$, then
$\ell\propto n/b$ and $k_1\propto n^{-1+\delta'}b$, so that
\begin{align}
\Xi_n&=O\big\{(n^{-1+\delta'}b)^{(\beta-2)/\beta}+
n^{3(1-\delta')(1-g_2(\beta))/2}b^{-(5-3 g_2(\beta))/2}\big\}\nonumber\\
&=o\big(n^{\{b_{max}(\beta)-1-\delta/2\}(\beta-2)/\beta}+
n^{-(1+3 g_2(\beta))/6}\big).
\label{pf:thm5.5}
\end{align}
That (\ref{pf:thm5.0}) holds under both cases (i) and (ii) follows from
(\ref{pf:thm5.1}) to (\ref{pf:thm5.5}).
\end{proof}

\begin{proof}[Proof of Theorem~\ref{thm:boot1.expo}]
Note that Theorem~\ref{thm:boot.expo}, (\ref{bias.cond}) and the orders prescribed 
of $(b,\ell,k_1)$ together imply 
\begin{eqnarray*}
\lefteqn{
\prob\big(\hat{T}^*_{b,\ell,k_1}\le y\big|X_1,\ldots,X_n\big)-
\Phi\left(\dfrac{y-n^{1/2}h^{5/2}f''(x_0)\mu_2/2}{\sqrt{f(x_0)\nu_2}}\right)}\\
&=&
O_p\big\{k_1\log\ell+n^{-1/2}\ell^{1/2}+(b\ell k_1)^{-1/2}\big\}=O_p\big\{n^{-1/3}(\log n)^{1/3}L_n^{-1}\big\},
\end{eqnarray*}
which proves (\ref{thm:boot1.expo}).
\end{proof}

\begin{proof}[Proof of Theorem~\ref{thm:boot2}]

Write for brevity 
\begin{align*}
\Psi_n &= n^{-1/2}(b\ell)^{1/2}+\big(nh^5/(b\ell)\big)^{(\beta-2)/(5\beta)}\\
& \quad +n^\delta\big(nh^5)^{(6g_1(\beta)-3)/10}(b\ell)^{-(1+3 g_1(\beta))/5}.
\end{align*}
The assumptions on $(b,\ell)$ require that
\begin{equation}
\label{pf:thm7.1}
\begin{gathered}
 (nh^5)^{-1/4}\ell^{5/2}=O(b\ell),\quad b\ell=O\big(
\min\big\{n,n^{1-\epsilon}h^5\ell^5\big\}\big),\\
(nh^5)^{-1/2}n^{2\epsilon/5}=O(\ell).
\end{gathered}
\end{equation}
Note that under (\ref{pf:thm7.1}), $b\ell$ has a smallest possible order $(nh^5)^{-3/2}n^\epsilon$ and
a largest possible order $n$.

If $h$ satisfies the condition given in case (i), then $\Psi_n=O\big\{n^{-1/2}(b\ell)^{1/2}\big\}$, which is minimised by
setting $b\ell$ and $\ell$ to have the smallest possible orders permitted under (\ref{pf:thm7.1}), that is
$b\ell\propto(nh^5)^{-1/4}\ell^{5/2}$ and $\ell\propto(nh^5)^{-1/2}n^{2\epsilon/5}$. This leads to the choice 
of $(b,\ell)$ specified in (i), under which $\Psi_n=O\big(n^{-5/4+\epsilon/2}h^{-15/4}\big)$.

If $h$ satisfies the condition given in case (ii), then 
\[\Psi_n=O\Big\{
\big(nh^5/(b\ell)\big)^{(\beta-2)/(5\beta)}
+n^\delta\big(nh^5)^{(6g_1(\beta)-3)/10}(b\ell)^{-(1+3 g_1(\beta))/5}\Big\} \]
whenever $b\ell$ has an order between $(nh^5)^{-3/2}n^\epsilon$ and
$nh^{(10\beta-20)/(7\beta-4)}$, and $\Psi_n=O\big\{n^{-1/2}(b\ell)^{1/2}\big\}$
whenever $b\ell$ has an order between $nh^{(10\beta-20)/(7\beta-4)}$ and $n$.
It follows that $\Psi_n$ attains a minimum order $h^{(5\beta-10)/(7\beta-4)}$ at
$b\ell\propto nh^{(10\beta-20)/(7\beta-4)}$, substitution of which into (\ref{pf:thm7.1})
yields the permissible range for $\ell$.

If $h$ satisfies the condition given in case (iii), then 
\[\Psi_n=O\Big\{n^\delta\big(nh^5)^{(6g_1(\beta)-3)/10}(b\ell)^{-(1+3 g_1(\beta))/5}\Big\}\]
whenever $b\ell$ has an order between 
$\big(n^{(6 g_1(\beta)+2+10\delta)}h^{30 g_1(\beta)-15}\big)^{1/(6g_1(\beta)+7)}$ and $(nh^5)^{-3/2}n^\epsilon$,
and $\Psi_n=O\big\{n^{-1/2}(b\ell)^{1/2}\big\}$
whenever $b\ell$ increases beyond the latter range.
It follows that $\Psi_n$ has a minimum order $\big(n^{-1 + 2\delta}h^{6g_1(\beta)-3}\big)^{5/(12 g_1(\beta)+14)}$ at
\[b\ell\propto \big(n^{(6 g_1(\beta)+2+10\delta)}h^{30 g_1(\beta)-15}\big)^{1/(6g_1(\beta)+7)},\]
substitution of which into (\ref{pf:thm7.1})
yields the permissible range for $\ell$.
\end{proof}

\begin{proof}[Proof of Theorem~\ref{thm:boot2.expo}]
Let 
\[
\Psi_n'=n^{1/5}h(b\ell)^{-1/5}\log\ell+n^{-1/10}h^{-1/2}(b\ell)^{-2/5}
+n^{-1/2}(b\ell)^{1/2}.
\]
The assumptions on $(b,\ell)$ require that
\begin{equation}
\label{pf:thm8.1}
\ell=O(b\ell),\;\; b\ell=o\big(nh^5\ell^5\big),\;\;b\ell=O(n),\;\;
(nh^5)^{-1/4}=o(\ell).
\end{equation}
Note that under (\ref{pf:thm8.1}), $b\ell$ has a smallest possible order slightly
exceeding $(nh^5)^{-1/4}$ and
a largest possible order $n$.

Consider first the case $h=O\big\{n^{-7/25}(\log n)^{-18/25}\big\}$. Here
\[
\Psi'_n=O\big\{n^{-1/10}h^{-1/2}(b\ell)^{-2/5}\big\}
\]
if
$b\ell$ has an order larger than $(nh^5)^{-1/4}$ but not exceeding
$n^{4/9}h^{-5/9}$, whereas
\[
\Psi'_n=O\big\{n^{-1/2}(b\ell)^{1/2}\big\}
\]
if $b\ell$ has an order larger than $n^{4/9}h^{-5/9}$.
It follows that $\Psi'_n$ attains a minimum order $(nh)^{-5/18}$ at
$b\ell\propto n^{4/9}h^{-5/9}$, substitution of which into (\ref{pf:thm8.1})
yields the permissible range for $\ell$.

Consider next the case $n^{7/25}(\log n)^{18/25}h\rightarrow\infty$. Here
\[
\Psi'_n=O\big\{n^{1/5}h(b\ell)^{-1/5}\log\ell\big\}
\]
if
$b\ell$ has an order larger than $(nh^5)^{-1/4}$ but not exceeding
$n(h\log n)^{10/7}$, whereas
\[
\Psi'_n=O\big\{n^{-1/2}(b\ell)^{1/2}\big\}
\]
if $b\ell$ has an order larger than $n(h\log n)^{10/7}$.
It follows that $\Psi'_n$ attains a minimum order $(h\log n)^{5/7}$ at
$b\ell\propto n(h\log n)^{10/7}$, substitution of which into (\ref{pf:thm8.1})
yields the permissible range for $\ell$.
\end{proof}

\begin{proof}
[Proof of Theorem~\ref{thm:boot3}]

The order of departure from normality of the bootstrap distribution of $\check{T}^*_{b,\ell,k}$ follows directly from Theorem~\ref{thm:boot}.
Let $\psi_n=k^{(\beta-2)/\beta}+b^{-1/2}\ell^{-1/2+\delta}k^{-3/2+3g_1(\beta)}+n^{-1/2+\delta}b^{-1/2}\ell^{1/2}k^{-3(1-g_2(\beta))/2}$, which depends monotonically on $(b,\ell)$. To minimise $\psi_n$ we should set $b$ as large and $\ell$ as small as possible. Substituting $b\propto n/\ell$ and $\ell=k^{-1-\delta'}$, $\psi_n$ reduces to
$k^{(\beta-2)/\beta}+n^{-1/2}k^{-3/2+3g_1(\beta)-\delta(1+\delta')}+n^{-1+\delta}k^{-\{5-3g_2(\beta)\}/2-\delta'}$, with $k$ satisfying $kn^{1/3}\rightarrow\infty$.

For $\beta\in(2,4]$, $\psi_n$ is dominated by $k^{(\beta-2)/\beta}+n^{-1/2}k^{-3/2+3g_1(\beta)-\delta(1+\delta')}$, which is minimised by setting 
$k\propto \max\big\{
n^{-\beta/\{\beta (5-6 g_1(\beta))-4\}},n^{-1/3+\delta'}\big\}$. The minimal 
order of $\psi_n$ is then given by 
$O\big\{n^{-(\beta-2)/\{\beta (5-6 g_1(\beta))-4\}+\delta}\big\}$ for $\beta\in(2,\beta_1)$
and by $O\big\{n^{-(\beta - 2)/(3\beta)+\delta}\big\}$ for $\beta\in[\beta_1,4]$, provided that $\delta'$ is chosen sufficiently small.

For $\beta\in(4,\infty)$, $\psi_n$ is dominated by $k^{(\beta-2)/\beta}+n^{-1+\delta}k^{-\{5-3g_2(\beta)\}/2-\delta'}$, which is minimised by setting 
$k\propto \max\big\{
n^{-2 \beta/\{\beta(7-3 g_2(\beta))-4\}},n^{-1/3+\delta'}\big\}$.
For $\delta'$ sufficiently small, the above choice of $k$ yields a minimal order
for $\psi_n$, which is $O\big\{n^{-2 (\beta-2)/\{\beta(7-3 g_2(\beta))-4\}+\delta}\big\}$ for $\beta\in(4,\beta_2)$ and $O\big\{n^{-(\beta - 2)/(3\beta)+\delta}\big\}$ for $\beta\ge\beta_2$.
\end{proof}

\begin{proof}
[Proof of Theorem~\ref{thm:boot3.expo}]

The order of departure from normality of the bootstrap distribution of $\check{T}^*_{b,\ell,k}$ follows directly from Theorem~\ref{thm:boot.expo}.
The block bootstrap contribution to the above order, that is $k\log\ell+n^{-1/2}\ell^{1/2}+(b\ell k)^{-1/2}$, can be reduced by setting
$b$ as large and $\ell$ as small as possible, subject to the constraints
$b=O(n/\ell)$ and
$k\ell\rightarrow\infty$. This suggests the choices
$b\propto n/\ell$ and $\ell\propto (kL_n^3)^{-1}$, under which the error due to the bootstrap has the order $k|\log k|+(nkL_n^3)^{-1/2}$, which is minimised by setting
$k\propto n^{-1/3}(\log n)^{-2/3}L_n^{-1}$.
\end{proof}

\bibliographystyle{imsart-nameyear}
\bibliography{hybridboot-density}

\end{document}